\DeclareMathOperator*{\esssup}{\mathrm{ess\,sup}}
\makeatletter\@addtoreset{equation}{section}\makeatother
\newtheorem{theorem}{Theorem}[section]
\newtheorem{lemma}[theorem]{Lemma}
\newtheorem{proposition}[theorem]{Proposition}
\newtheorem{assumption}[theorem]{Assumption}
\newtheorem{definition}[theorem]{Definition}
\newtheorem{remark}[theorem]{Remark}
\numberwithin{equation}{section}
\title{Evolution of states in a continuum migration model}
\author{Yuri  Kondratiev}
\address{Fakut\"at f\"ur Mathematik, Universit\"at Bielefeld, Bielefeld D-33615, Germany and Interdisciplinary Center
for Complex Systems, Dragomanov University, Kyiv, Ukraine}
\email{kondrat@math.uni-bielefeld.de}
\author{ Yuri  Kozitsky}
\address{Instytut Matematyki, Uniwersytet Marii Curie-Sk{\l}odowskiej, 20-031 Lublin, Poland}
\email{jkozi@hektor.umcs.lublin.pl}
\keywords{Markov evolution, competition kernel, Poisson random
field}
\begin{document}

\subjclass{60J80; 92D25; 82C22}%

\begin{abstract}

The Markov evolution of states of a continuum migration model is
studied. The model describes an infinite system of entities placed
in $\mathds{R}^d$ in which the constituents appear (immigrate) with
rate $b(x)$ and disappear, also due to competition. For this model,
we prove the existence of the evolution of states $\mu_0 \mapsto
\mu_t$ such that the  moments $\mu_t(N_\Lambda^n)$, $n\in
\mathds{N}$, of the number of entities in compact $\Lambda \subset
\mathds{R}^d$ remain bounded for all $t>0$. Under an additional
condition, we prove that the density of entities and the second
correlation function remain bounded globally in time.

\end{abstract}

\maketitle

\section{Introduction}
 We study the Markov dynamics of an infinite
system of point entities placed in $\mathds{R}^d$, $d\geq 1$, which
appear (immigrate) with space-dependent rate $b(x)\geq 0$, and
disappear. The rate of disappearance of the entity located at a
given $x\in \mathds{R}^d$ is the sum of the intrinsic disappearance
rate $m(x)\geq 0$ and the part related to the interaction with the
existing community, which is interpreted as \emph{competition}
between the entities. The phase space is the set $\Gamma$ of all
subsets $\gamma \subset \mathds{R}^d$ such that the set
$\gamma_\Lambda:=\gamma\cap\Lambda$ is finite whenever $\Lambda
\subset \mathds{R}^d$ is compact. For each such $\Lambda$, one
defines the counting map $\Gamma \ni \gamma \mapsto |\gamma_\Lambda|
:= \#\{\gamma\cap\Lambda\}$, where the latter denotes cardinality.
Thereby, one introduces the subsets $\Gamma^{\Lambda,n}:=\{ \gamma
\in \Gamma : |\gamma_\Lambda| = n\}$, $n \in \mathds{N}_0$, and
equips $\Gamma$ with the $\sigma$-field generated by all such
$\Gamma^{\Lambda,n}$. This allows for considering probability
measures on $\Gamma$ as states of the system. Among them there are
Poissonian states in which the entities are independently
distributed over $\mathds{R}^d$,  see \cite[Chapter 2]{Kingman}.
They may serve as reference states for studying correlations between
the positions of the entities. For the nonhomogeneous Poisson
measure $\pi_\varrho$ with density $\varrho:\mathds{R}^d \to
\mathds{R}_{+}:=[0, +\infty)$, $n \in \mathds{N}_0$ and every
compact $\Lambda$, one has
\begin{equation}
  \label{J1}
\pi_\varrho (\Gamma^{\Lambda,n}) = \langle \varrho \rangle_\Lambda^n
\exp\left( - \langle  \varrho \rangle_\Lambda \right)/ n!, \qquad
\langle \varrho \rangle_\Lambda:= \int_{\Lambda} \varrho(x) dx .
\end{equation}
By (\ref{J1}) one readily gets the $\pi_\varrho$-expected value of
the number $N_\Lambda$ of entities contained in $\Lambda$ in the
form
\begin{gather}
  \label{J1a}
\pi_\varrho(N_\Lambda) =  \int_{\Gamma} |\gamma_\Lambda|
\pi_{\varrho} (d\gamma)= \langle \varrho \rangle_\Lambda.
\end{gather}
The case of $\varrho\equiv \varkappa>0$ corresponds to the
homogeneous Poisson measure $\pi_\varkappa$.

The counting map $\Gamma \ni \gamma \mapsto |\gamma|$ can also be
defined for $\Lambda = \mathds{R}^d$. Then the set of \emph{finite}
configurations
\begin{equation}
 \label{J3a}
 \Gamma_0:= \bigcup_{n\in \mathds{N}_0}\{
\gamma \in \Gamma: |\gamma|=n\}
\end{equation}
is measurable. In a state with the property $\mu(\Gamma_0)=1$, the
system  is ($\mu$-almost surely) \emph{finite}. By (\ref{J1}) one
gets that either $\pi_\varrho(\Gamma_0) =1$ or
$\pi_\varrho(\Gamma_0) =0$, depending on whether or not $\varrho$ is
globally integrable. As $\pi_\varkappa(\Gamma_0) =0$,  the system in
state $\pi_\varkappa$ is infinite. The use of infinite
configurations for modeling large finite populations is as a rule
justified, see, e.g., \cite{Cox}, by the argument that in such a way
one gets rid of the boundary and size effects. Note that a finite
system with dispersal -- like the one studied in
\cite{DimaN2,FKKK,KK} -- being placed in a noncompact habitat always
disperse to fill its empty parts, and thus is \emph{developing}.
Infinite configurations are supposed to model \emph{developed}
populations. In this work, we shall consider infinite systems.

To characterize  states on $\Gamma$ one employs {\it observables} --
appropriate functions $F:\Gamma \rightarrow \mathds{R}$. Their
evolution is obtained from the Kolmogorov equation
\begin{equation}
 \label{R2}
\frac{d}{dt} F_t = L F_t , \qquad F_t|_{t=0} = F_0, \qquad t>0,
\end{equation}
where the generator $L$ specifies the model. The states' evolution
is then obtained from the Fokker--Planck equation
\begin{equation}
 \label{R1}
\frac{d}{dt} \mu_t = L^* \mu_t, \qquad \mu_t|_{t=0} = \mu_0,
\end{equation}
related to that in (\ref{R2}) by the duality $\mu_t(F_0) =
\mu_0(F_t)$, where
\[
\mu(F) := \int_{\Gamma} F(\gamma) \mu(d \gamma).
\]
The model that we study in this work is specified by the following
\begin{eqnarray}
 \label{L}
\left(L F \right)(\gamma) & = & \sum_{x\in \gamma}\left( m(x) +
\sum_{y\in \gamma\setminus x} a (x-y)  \right) \left[F(\gamma \setminus x) - F(\gamma) \right]\\[.2cm]
& + & \int_{\mathds{R}^d} b(x) \left[F(\gamma \cup x) - F(\gamma)
\right]dx. \nonumber
\end{eqnarray}
Here $b(x)$ is the {\it immigration} rate, $m(x) \geq 0$ is the
intrinsic emigration (mortality) rate, and $a\geq 0$ is the
\emph{competition kernel}.
\begin{assumption}
  \label{Ass1}
The competition kernel $a$  is continuous and belongs to $L^1
(\mathds{R}^d) \cap L^\infty (\mathds{R}^d)$. The immigration and
mortality rates $b$ and $m$ are continuous and bounded.
\end{assumption}
According to this we set
\begin{gather}
  \label{J6a}
\langle a\rangle = \int_{\mathds{R}^d} a (x ) dx,
\qquad \|a\| = \sup_{x\in \mathds{R}^d} a(x) , \\[.2cm] \|b\| = \sup_{x\in
\mathds{R}^d} b(x), \qquad  \|m\| = \sup_{x\in \mathds{R}^d} m(x) .
\nonumber
\end{gather}
If one takes in (\ref{L}) $a\equiv 0$, the model becomes exactly
soluble, see subsection 2.3 below. This means that the evolution can
be constructed explicitly for each initial state $\mu_0$. Assuming
that $\mu_0 (N_{\Lambda}^n) < \infty$ for all $n\in \mathds{N}$, one
can get the information about the time dependence of such moments.
For $m(x) \geq m_*>0$ for all $x\in \mathds{R}^d$, one obtains that
\begin{equation*}
  %\label{J6b}
\forall t>0 \qquad  \mu_t (N_{\Lambda}^n) \leq C_\Lambda^{(n)},
\end{equation*}
holding for each compact $\Lambda$. Otherwise, all the moments
$\mu_t (N_{\Lambda}^n)$ are increasing ad infinitum as $t\to
+\infty$. If the initial state is $\pi_{\varrho_0}$, then $\mu_t =
\pi_{\varrho_t}$ with $\varrho_t (x) = \varrho_0(x) +b(x) t$ for all
$x$ such that $m(x)=0$, cf (\ref{T3}) below.  In \cite{DimaR}, for
the model (\ref{L}) with $m\equiv 0$ and a nonzero $a$ satisfying a
certain (quite burdensome) condition, it was shown that $\mu_t
(N_\Lambda)  \leq C  {\rm V}(\Lambda)$ for an appropriate constant
and large enough values of the Euclidean volume  ${\rm V}(\Lambda)$,
provided the evolution of states $\mu_0 \mapsto \mu_t$ exists.

In this article, assuming that the initial state $\mu_0$ is
sub-Poissonian, see Definition \ref{J1df} below, we prove that the
evolution of states $\mu_0 \mapsto \mu_t$, $t>0$, exists (Theorem
\ref{1tm}) and is such that $\mu_t (N_\Lambda^n) \leq
C^{(n)}_\Lambda$ for each $t>0$ (Theorem \ref{2tm}) and all $m$,
including the case $m\equiv0$. Moreover, if the correlation
functions $k_{\mu_0}^{(n)}$, $n\in \mathds{N}$, of the initial state
are continuous,  see subsection 2.1 below, then all
$k_{\mu_t}^{(n)}$, $n\in \mathds{N}$ are also continuous and such
that
\begin{gather}
  \label{J1d}
k^{(1)}_{\mu_t}(x) \leq C_1,
\qquad
k_{\mu_t}^{(2)} (x, y) d x d y \leq C_2,
\end{gather}
holding (with some positive $C_1$ and $C_2$) for all $t>0$ and all values of the spatial variables.

The structure of the article is as follows. In Section 2, we
introduce the necessary technicalities and then formulate the
results: Theorems \ref{1tm} and \ref{2tm}. Thereafter, we make a
number of comments to them. In Sections 3 and 4,  we present the
proofs of Theorems \ref{1tm} and \ref{2tm}, respectively.

\section{Preliminaries and the Results}

We begin by outlining some technical aspects of this work -- a more
detailed description of them can be found in
\cite{DimaN2,FKKK,KK,Tobi} and in the literature quoted therein.

By $\mathcal{B}(\mathds{R})$ we denote the sets of all Borel subsets
of $\mathds{R}$. The configuration space $\Gamma$ is equipped with
the vague topology, see \cite{Tobi}, and thus with the corresponding
Borel $\sigma$-field $\mathcal{B}(\Gamma)$, which makes it a
standard Borel space. Note that $\mathcal{B}(\Gamma)$ is exactly the
$\sigma$-field generated by the sets $\Gamma^{\Lambda,n}$, mentioned
in Introduction.  By $\mathcal{P}(\Gamma)$ we denote the set of all
probability measures on $(\Gamma, \mathcal{B}(\Gamma))$.

\subsection{Correlation functions}

Like in \cite{FKKK,KK}, the evolution of states will be described by
means of correlation functions  without the direct use of
(\ref{R1}). To explain the essence of this approach let us consider
the set $\varTheta$ of all compactly supported continuous functions
$\theta:\mathbb{R}^d\to (-1,0]$. For a state, $\mu$, its {\it
Bogoliubov} functional, cf. \cite{TobiJoao}, is
\begin{equation}
  \label{I1}
B_\mu (\theta) = \int_{\Gamma} \prod_{x\in \gamma} ( 1 + \theta (x))
\mu( d \gamma), \qquad \theta \in \varTheta.
\end{equation}
For
the homogeneous Poisson measure $\pi_\varkappa$,  it takes the form
\begin{equation*}
  %\label{I2}
B_{\pi_\varkappa} (\theta) = \exp\left(\varkappa
\int_{\mathbb{R}^d}\theta (x) d x \right).
\end{equation*}
\begin{definition}
  \label{J1df}
The set of states $\mathcal{P}_{\rm exp}(\Gamma)$ is defined as that
containing all those states $\mu\in \mathcal{P}(\Gamma)$ for which
$B_\mu$ can be continued, as a function of $\theta$, to an
exponential type entire function on $L^1 (\mathbb{R}^d)$. The
elements of $\mathcal{P}_{\rm exp}(\Gamma)$ are called
sub-Poissonian states.
\end{definition}
It can be shown that a given $\mu$ belongs to $\mathcal{P}_{\rm
exp}(\Gamma)$ if and only if its functional $B_\mu$ can be written
down in the form
\begin{eqnarray}
  \label{I3}
B_\mu(\theta) = 1+ \sum_{n=1}^\infty
\frac{1}{n!}\int_{(\mathbb{R}^d)^n} k_\mu^{(n)} (x_1 , \dots , x_n)
\theta (x_1) \cdots \theta (x_n) d x_1 \cdots d x_n,
\end{eqnarray}
where $k_\mu^{(n)}$ is the $n$-th order correlation function of
$\mu$. It is a symmetric element of $L^\infty ((\mathbb{R}^d)^n)$
for which
\begin{equation}
\label{I4}
  \|k^{(n)}_\mu \|_{L^\infty
((\mathbb{R}^d)^n)} \leq C \exp( \vartheta n), \qquad n\in
\mathbb{N}_0,
\end{equation}
with some $C>0$ and $\vartheta \in \mathbb{R}$. Note that
$k_{\pi_\varkappa}^{(n)} (x_1 , \dots , x_n)= \varkappa^n$. Note
also that (\ref{I3}) resembles the Taylor expansion of the
characteristic function of a probability measure. In view of this,
$k^{(n)}_\mu$ are also called (factorial) \emph{moment functions},
cf e.g., \cite{Mu}.

Recall that $\Gamma_0$ -- the set of all finite $\gamma \in \Gamma$
defined in (\ref{J3a}) -- is an element of $\mathcal{B}(\Gamma)$. A
function $G:\Gamma_0 \to \mathds{R}$ is
$\mathcal{B}(\Gamma)/\mathcal{B}(\mathds{R} )$-measurable, see
\cite{FKKK}, if and only if, for each $n\in \mathds{N}$, there
exists a symmetric Borel function $G^{(n)}: (\mathds{R}^{d})^{n} \to
\mathds{R}$ such that
\begin{equation}
 \label{7}
 G(\eta) = G(\eta) = G^{(n)} ( x_1, \dots , x_{n}), \quad {\rm for} \ \eta = \{ x_1, \dots , x_{n}\}.
\end{equation}
\begin{definition}
  \label{Gdef}
A measurable function $G:\Gamma_0 \to \mathds{R}$  is said to have bounded
support if: (a) there exists $\Lambda \in \mathcal{B}_{\rm b}
(\mathds{R}^d)$ such that $G(\eta) = 0$ whenever $\eta\cap
(\mathds{R}^d \setminus \Lambda)\neq \emptyset$; (b) there exists
$N\in \mathds{N}_0$ such that $G(\eta)=0$ whenever $|\eta|
>N$.  By $\Lambda(G)$ and $N(G)$ we denote the
smallest $\Lambda$ and $N$ with the properties just mentioned. By
$B_{\rm bs}(\Gamma_0)$ we denote the set of all such functions.
\end{definition}
Set $\mathcal{B}(\Gamma_0) = \{A\in \mathcal{B}(\Gamma): A \subset
\Gamma_0\}$. The Lebesgue-Poisson measure $\lambda$ on $(\Gamma_0,
\mathcal{B}(\Gamma_0))$ is defined by the following formula
\begin{eqnarray}
\label{8} \int_{\Gamma_0} G(\eta ) \lambda ( d \eta)  = G(\emptyset)
+ \sum_{n=1}^\infty \frac{1}{n! } \int_{(\mathds{R}^d)^{n}} G^{(n)}
( x_1, \dots , x_{n} ) d x_1 \cdots dx_{n},
\end{eqnarray}
holding for all $G\in B_{\rm bs}(\Gamma_0)$. Like in (\ref{7}), we
introduce $k_\mu : \Gamma_0 \to \mathds{R}$ such that $k_\mu(\eta) =
k^{(n)}_\mu (x_1, \dots , x_n)$ for $\eta = \{x_1, \dots , x_n\}$,
$n\in \mathds{N}$. We also set $k_\mu(\emptyset)=1$. With the help
of the measure introduced in (\ref{8}), the expressions for $B_\mu$ in
(\ref{I1}) and (\ref{I3}) can be combined into the following formulas
\begin{eqnarray}
  \label{1fa}
 B_\mu (\theta)& = & \int_{\Gamma_0} k_\mu(\eta) \prod_{x\in \eta} \theta (x) \lambda (d\eta)=: \int_{\Gamma_0} k_\mu(\eta) e( \eta; \theta) \lambda (d \eta)
 \\[.2cm]
 & = &  \int_{\Gamma} \prod_{x\in \gamma} (1+ \theta (x)) \mu (d \gamma) =: \int_{\Gamma} F_\theta (\gamma) \mu(d
 \gamma). \nonumber
\end{eqnarray}
Thereby, one can transform  the action of $L$ on $F$, see
(\ref{L}), to the action of $L^\Delta$ on $k_\mu$ according to the
rule
\begin{equation}
  \label{1g}
\int_{\Gamma}(L F_\theta) (\gamma) \mu(d \gamma) = \int_{\Gamma_0}
(L^\Delta k_\mu) (\eta) e(\eta;\theta)
 \lambda (d \eta).
\end{equation}
This will allow us to pass from (\ref{R1}) to the corresponding
Cauchy problem for the correlation functions
\begin{equation}
  \label{J7}
\frac{d}{dt} k_t = L^\Delta k_t, \qquad k_t|_{t=0} = k_{\mu_0}.
\end{equation}
By (\ref{1g}) the action of $L^\Delta$ is
\begin{equation}
  \label{J10}
\left( L^\Delta k \right) (\eta) = (L^{\Delta,-}k)(\eta) +
\sum_{x\in \eta} b(x) k(\eta \setminus x),
\end{equation}
where
\begin{equation}
 \label{J8a}
(L^{\Delta,-}k)(\eta)  = - E(\eta) k(\eta) -
\int_{\mathds{R}^d}\left(\sum_{y\in \eta} a (x-y) \right) k(\eta\cup
x)d x,
 \end{equation}
and
\begin{equation}
  \label{J9}
  E(\eta) = \sum_{x\in \eta}m(x) + \sum_{x\in \eta} \sum_{y\in \eta\setminus x} a(x-y).
\end{equation}
 In the next
subsection, we introduce the spaces where we are going to define
(\ref{J7}).

\subsection{The Banach spaces}
By (\ref{I3}) and (\ref{1fa}), it follows that $\mu \in
\mathcal{P}_{\rm exp}(\Gamma)$ implies
\begin{equation*}
 % \label{17}
 |k_\mu (\eta)| \leq C \exp( \vartheta  |\eta|),
\end{equation*}
holding for $\lambda$-almost all $\eta\in \Gamma_0$, some $C>0$, and
$\vartheta\in \mathds{R}$. In view of this, we set
\begin{equation}
 \label{18}
\mathcal{K}_\vartheta := \{ k:\Gamma_0\to \mathds{R}:
\|k\|_\vartheta <\infty\},
\end{equation}
where
\begin{equation}
  \label{17a}
 \|k\|_\vartheta = \esssup_{\eta \in \Gamma_0}\left\{ |k_\mu (\eta)| \exp\big{(} - \vartheta
  |\eta| \big{)} \right\}.
\end{equation}
Clearly, (\ref{18}) and (\ref{17a}) define a Banach space. In the following, we use the ascending scale of such
spaces $\mathcal{K}_\vartheta$, $\vartheta \in \mathds{R}$, with the
property
\begin{equation}
  \label{19}
\mathcal{K}_\vartheta \hookrightarrow \mathcal{K}_{\vartheta'},
\qquad \vartheta < \vartheta',
\end{equation}
where $\hookrightarrow$  denotes continuous embedding.

For $G\in B_{\rm bs}(\Gamma_0)$, we set
\begin{equation}
  \label{9a}
(KG)(\gamma) = \sum_{\eta \Subset \gamma} G(\eta),
\end{equation}
where $\Subset$ indicates that the summation is taken over all
finite subsets. It satisfies, see Definition \ref{Gdef},
\begin{equation*}
 % \label{9b}
|(KG)(\gamma)| \leq \left( 1 + |\gamma\cap\Lambda (G)|\right)^{N(G)}.
\end{equation*}
The latter means that $\mu(KG) < \infty$ for each $\mu\in
\mathcal{P}_{\rm exp}(\Gamma)$. By (\ref{1fa}) this yields
\begin{equation}
 \label{J7a}
\langle \! \langle G, k_\mu \rangle \! \rangle := \int_{\Gamma_0}
G(\eta) k_\mu(\eta) \lambda (d \eta) =\mu(KG)
  < \infty.
\end{equation}
Set
\begin{equation}
  \label{9g}
B^\star_{\rm bs} (\Gamma_0) =\{ G\in B_{\rm bs}(\Gamma_0):
(KG)(\gamma) \geq 0 \ {\rm for} \ {\rm all} \ \gamma\in \Gamma\}.
\end{equation}
By \cite[Theorems 6.1 and 6.2 and Remark 6.3]{Tobi} one can prove
the next statement.
\begin{proposition}
  \label{Gpn}
Let  a measurable function $k : \Gamma_0 \to \mathds{R}$  have the
following properties:
\begin{eqnarray}
  \label{9h}
& (a) & \ \langle \! \langle G, k \rangle \!\rangle \geq 0, \qquad
{\rm for} \ {\rm all} \ G\in B^\star_{\rm bs} (\Gamma_0);\\[.2cm]
& (b) & \ k(\emptyset) = 1; \qquad (c) \ \ k(\eta) \leq
 C^{|\eta|} ,
\nonumber
\end{eqnarray}
with (c) holding for some $C >0$ and $\lambda$-almost all $\eta\in
\Gamma_0$. Then there exists a unique state $\mu \in
\mathcal{P}_{\rm exp}(\Gamma)$ for which $k$ is the correlation
function.
\end{proposition}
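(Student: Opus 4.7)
The plan is to construct $\mu$ directly from $k$ by treating the pairing
\[ \ell(G) := \langle\!\langle G, k\rangle\!\rangle = \int_{\Gamma_0} G(\eta)\, k(\eta)\, \lambda(d\eta), \qquad G \in B_{\rm bs}(\Gamma_0), \]
as the candidate for $G \mapsto \mu(KG)$, cf. (\ref{J7a}). The three hypotheses match exactly what a realizability theorem needs: (a) says that $\ell$ is nonnegative on the cone $B^\star_{\rm bs}(\Gamma_0)$, which via the $K$-transform (\ref{9a}) corresponds to the cone of nonnegative cylinder functions on $\Gamma$; (b) fixes the normalization, since the only $\eta$ with $|\eta|=0$ is $\emptyset$ and $K\mathbf{1}_{\{\emptyset\}}\equiv 1$, so $\ell(\mathbf{1}_{\{\emptyset\}}) = k(\emptyset) = 1$ will translate into $\mu(\Gamma)=1$; (c) supplies the local growth control on $k$ needed to ensure that $\ell$ actually arises from a locally finite point measure rather than from a merely positive linear functional.

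With these ingredients in place, I would invoke \cite[Theorems 6.1 and 6.2 and Remark 6.3]{Tobi}, whose content is precisely that a measurable $k:\Gamma_0\to\mathds{R}$ satisfying (a)--(c) is the correlation function of a \emph{unique} probability measure $\mu$ on $(\Gamma,\mathcal{B}(\Gamma))$. It then remains to upgrade $\mu$ to a sub-Poissonian state. The pointwise bound $k(\eta) \leq C^{|\eta|}$ in (c) gives $\|k^{(n)}_\mu\|_{L^\infty((\mathds{R}^d)^n)} \leq C^n$, and substitution into the expansion (\ref{I3}) shows that $B_\mu$ extends from $\varTheta$ to an entire function of exponential type on $L^1(\mathds{R}^d)$; by Definition \ref{J1df} this is exactly the statement that $\mu \in \mathcal{P}_{\rm exp}(\Gamma)$. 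The uniqueness provided by \cite{Tobi} is a fortiori uniqueness within $\mathcal{P}_{\rm exp}(\Gamma)$.

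The main obstacle is the realizability step itself: producing a genuine probability measure on $\Gamma$ out of a $K$-positive linear functional on $B_{\rm bs}(\Gamma_0)$ is an infinite-dimensional moment problem of Lenard type, requiring both positivity on the correct cone and a growth control on $k$ that pins down the moments $\mu(|\gamma_\Lambda|^n)$ uniquely. It is precisely this step that the quoted results from \cite{Tobi} perform; everything else in the proof reduces to matching the notation of (\ref{1fa})--(\ref{J7a}) with the hypotheses of those theorems and reading off the sub-Poissonian property from condition (c).
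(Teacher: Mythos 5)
Your proposal is correct and follows essentially the same route as the paper, which proves this proposition precisely by invoking Theorems 6.1, 6.2 and Remark 6.3 of the Kondratiev--Kuna reference \cite{Tobi}, the Lenard-type realizability result you describe. Your additional remarks on reading the sub-Poissonian property off condition (c) via (\ref{I3}) and Definition \ref{J1df} are consistent with how the paper uses the bound (\ref{I4}), so nothing further is needed.
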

Set, cf (\ref{9g}),
\begin{equation}
  \label{19a}
\mathcal{K}^\star_\vartheta =\{k\in \mathcal{K}_\vartheta: \langle
\! \langle G,k \rangle \! \rangle \geq 0 \ {\rm for} \ {\rm all} \
G\in B^\star_{\rm bs} (\Gamma_0)\},
\end{equation}
which is a subset of the cone
\begin{equation}
  \label{19b}
\mathcal{K}^+_\vartheta =\{k\in \mathcal{K}_\vartheta: k(\eta) \geq
0 \ \ {\rm for} \  \lambda-{\rm almost} \ {\rm all} \ \eta \in
\Gamma_0\}.
\end{equation}
By Proposition \ref{Gpn} it follows that each $k\in
\mathcal{K}^\star_\vartheta$ such that $k(\emptyset) = 1$ is the
correlation function of a unique state $\mu\in \mathcal{P}_{\rm
exp}(\Gamma)$. Then we define
\begin{equation*}
  %\label{19c}
\mathcal{K} = \bigcup_{\vartheta \in \mathds{R}}
\mathcal{K}_\vartheta, \qquad \mathcal{K}^\star = \bigcup_{\vartheta
\in \mathds{R}} \mathcal{K}_\vartheta^\star.
\end{equation*}
As a sum of Banach spaces, the linear space $\mathcal{K}$ is
equipped with the corresponding inductive topology that turns it
into a locally convex space.

\subsection{Without competition}

The version of  (\ref{L}) with  $a\equiv 0$ is known as the
Surgailis model, see \cite{Sur} and the discussion in \cite{DimaR}.
This model is exactly soluble, which means that the solution of
(\ref{J7}) can be written down explicitly in the following form
\begin{equation}
  \label{T5}
k_t (\eta) = \sum_{\xi\subset \eta}e(\xi; \phi_t) e(\eta \setminus
\xi; \psi_t) k_{\mu_0}(\eta\setminus \xi),
\end{equation}
where
\begin{eqnarray}
  \label{T3}
\psi_t (x)& = & e^{- m(x) t}, \qquad \quad e(\xi; \phi)= \prod_{x\in
\xi}\phi(x), \\[.2cm] \nonumber \phi_t(x) & = & \left\{ \begin{array}{ll} \left( 1 - e^{- m(x) t }
\right)\frac{b(x)}{m(x)} \qquad  &{\rm for} \ \ m(x)>0,\\[.3cm]b(x)t \qquad &{\rm for}  \ \ m(x)=0. \end{array} \right.
\end{eqnarray}
The corresponding state $\mu_t$ has the Bogoliubov functional
\begin{equation}
 \label{BF}
B_{\mu_t} (\theta) = \exp\left( \int_{\mathds{R}^d} \theta (x)
\phi_t (x) d x \right)B_{\mu_0} (\theta \psi_t),
\end{equation}
which one obtains from (\ref{1fa}) and (\ref{T5}). This formula can
be used to extend the evolution $\mu_0\mapsto \mu_t$ to all
$\mu_0\in \mathcal{P}(\Gamma)$. Indeed, for each $t>0$ and
$\theta\in \varTheta$, cf (\ref{I1}), we have that $\theta \psi_t
\in \varTheta$, and hence $B_{\mu_0} (\theta \psi_t)$ is the
Bogoliubov functional of a certain state.\footnote{This state is an
independent thinning of $\mu_0$.} The same is true for the left-hand
side of (\ref{BF}), and the state $\mu_t$  can be considered as a
weak solution of the corresponding Fokker-Planck equation
(\ref{R1}).

If the initial
state is Poissonian with density $\varrho_0 (x)$, by (\ref{BF}) the state $\mu_t$ is also Poissonian with the density
$$\varrho_t (x) = \psi_t (x) \varrho_0 (x) + \phi_t(x).$$
If $m(x) \geq m_*>0$ for some $m_*$ and all $x\in \mathds{R}^d$,
then the solution in (\ref{T5}) lies in $\mathcal{K}_{\vartheta_*}$
for all $t>0$. Here
\begin{equation}
  \label{T1}
  \vartheta_* = \max\{ \vartheta_0; \log (\|b\|/m_*)\}.
\end{equation}
Otherwise, the solution in (\ref{T5}) is unboundedly increasing in
$t$. If, for some compact $\Lambda$, $m(x)=0$ for $x\in \Lambda$,
then by (\ref{T5}) and (\ref{T3}) we get
\begin{equation*}
 % \label{T1a}
 k_t^{(1)} (x) = k_{\mu_0}^{(1)} (x) + b(x) t , \qquad x \in
 \Lambda,
\end{equation*}
that by (\ref{J1a}), (\ref{9a}) and (\ref{J7a}) yields
\begin{gather}
\label{T6}
  \mu_t (N_\Lambda) = \int_{\Gamma} |\gamma_\Lambda| \mu_t ( d \gamma) = \int_{\Gamma}\left( \sum_{x\in \gamma} I_\Lambda (x)\right) \mu_t(d\gamma)
  \\[.2cm] \nonumber =
\int_{\Gamma} (K I_\Lambda)(\gamma) \mu_t(d \gamma) =
\int_{\Lambda}k^{(1)}_t(x) d x =
   \mu_0 (N_\Lambda) + t \int_{\Lambda} b(x) dx,
\end{gather}
where $I_\Lambda$ is the indicator of $\Lambda$. Then $\mu_t
(N_\Lambda)\to +\infty$ as $t\to +\infty$ if $b$ is not identically
zero on $\Lambda$.

\subsection{The statements}

For each $\vartheta \in \mathds{R}$ and $\vartheta'>\vartheta$, the
expressions in (\ref{J10}) and (\ref{J8a}) can be used to define the
corresponding bounded linear operators $L^\Delta_{\vartheta'
\vartheta}$ acting from $\mathcal{K}_\vartheta$ to
$\mathcal{K}_{\vartheta'}$. Their operator norms can be estimated
similarly as in \cite[eqs. (3.11), (3.13)]{KK}, which yields, cf.
(\ref{J6a}),
\begin{equation}
 \label{J12}   \|L^\Delta_{\vartheta'
\vartheta}\| \leq  \frac{4 \| a \| }{e^2 (\vartheta' - \vartheta)^2}
+ \frac{ \|b\| e^{-\vartheta} + \|m\| + \langle a \rangle
e^{\vartheta'}}{e (\vartheta' - \vartheta)}.
\end{equation}
By means of the collection $\{L^\Delta_{\vartheta' \vartheta}\}$
with all $\vartheta\in \mathds{R}$ and $\vartheta'>\vartheta$ we
introduce a continuous linear operator acting on $\mathcal{K}$,
denoted also as $L^\Delta$, and thus define the corresponding Cauchy
problem (\ref{J7}) in this space. By its (global in time) solution
we will mean a continuously differentiable function $[0,+\infty) \ni
t \mapsto k_t \in \mathcal{K}$ such that both equalities in
(\ref{J7}) hold. Our results are given in the following statements,
both based on Assumption \ref{Ass1}.
\begin{theorem}[Existence of evolution]
  \label{1tm}
For each $\mu_0 \in \mathcal{P}_{\rm exp} (\Gamma)$, the problem in
(\ref{J7}) with $L^\Delta:\mathcal{K}\to \mathcal{K}$ as in
(\ref{J10}), (\ref{J8a}) and (\ref{J12}) has a unique solution which
lies in  $\mathcal{K}^\star$ and is such that $k_t(\emptyset)=1$ for
all $t>0$. Therefore, for each $t>0$, there exists a unique state
$\mu_t\in \mathcal{P}_{\rm exp}(\Gamma)$ such that $k_t =
k_{\mu_t}$. Moreover, for all $t>0$, the following holds
\begin{equation}
  \label{T2}
0\leq k_t (\eta) \leq \sum_{\xi\subset \eta}e(\xi; \phi_t) e(\eta
\setminus \xi; \psi_t) k_{\mu_0}(\eta\setminus \xi),
\end{equation}
where $\phi_t$ and $\psi_t$ are as in (\ref{T3}). If the intrinsic
mortality rate satisfies $m(x) \geq m_*
>0$ for all $x\in \mathds{R}^d$, then for all $t>0$ the solution $k_t$  lies in $\mathcal{K}_{\vartheta_*}$
with $\vartheta_*$ is given in (\ref{T1}).
\end{theorem}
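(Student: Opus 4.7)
The plan is to exploit the decomposition $L^\Delta = L_0^\Delta + P^\Delta$, where $L_0^\Delta$ is the Surgailis (``no competition'') operator obtained by setting $a\equiv 0$ in (\ref{J10})--(\ref{J8a}), and $P^\Delta$ collects the remaining $a$-dependent terms
\begin{equation*}
(P^\Delta k)(\eta) = -\Bigl(\sum_{x\in\eta}\sum_{y\in\eta\setminus x} a(x-y)\Bigr)k(\eta) - \int_{\mathds{R}^d}\Bigl(\sum_{y\in\eta} a(x-y)\Bigr) k(\eta\cup x)\, dx.
\end{equation*}
By (\ref{T5}), the semigroup $S_0(t)$ generated by $L_0^\Delta$ on the scale $\{\mathcal{K}_\vartheta\}$ is explicit and positivity-preserving, and $S_0(t)k_{\mu_0}$ is precisely the right-hand side of (\ref{T2}). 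The crucial sign observation is that $P^\Delta$ maps $\mathcal{K}^+_\vartheta$ into the pointwise nonpositive cone (both summands are $\leq 0$ when $k\geq 0$ and $a\geq 0$), so competition only \emph{decreases} correlations, which is the heuristic making (\ref{T2}) plausible.

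To turn this into a proof I would introduce a truncation. For each $N\in\mathds{N}$, let $\Lambda_N\subset\mathds{R}^d$ be the ball of radius $N$, replace $b$ with $b\,\mathds{1}_{\Lambda_N}$ and the integration over $\mathds{R}^d$ in (\ref{J8a}) by integration over $\Lambda_N$, and restrict the resulting dynamics to functions supported on configurations $\eta\subset\Lambda_N$ with $|\eta|\leq N$. The truncated problem is a finite-dimensional linear ODE and thus has a unique global solution $k_t^{(N)}$. Writing $k_t^{(N)}=S_0^{(N)}(t)k_{\mu_0}+\int_0^t S_0^{(N)}(t-s) P_N^\Delta k_s^{(N)}\,ds$ and iterating Picard, the positivity of $S_0^{(N)}$ together with the nonpositivity of $P_N^\Delta$ on $\mathcal{K}^+_\vartheta$ gives inductively
\begin{equation*}
0 \leq k_t^{(N)}(\eta) \leq (S_0(t)k_{\mu_0})(\eta), \qquad \eta\in\Gamma_0.
\end{equation*}
The lower bound also follows from the fact that the truncated $L^\Delta_N$, read via $KG$ from (\ref{9a}), is the generator of an honest Markov evolution of finite measures on $\Gamma$, so $k_t^{(N)}\in\mathcal{K}^\star_\vartheta$ automatically.

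The third step is to pass to the limit $N\to\infty$. The pointwise envelope $S_0(t)k_{\mu_0}$ together with the Ovsyannikov-type bound (\ref{J12}) applied on a triple $\vartheta<\vartheta''<\vartheta'$ of scales yields uniform $\mathcal{K}_{\vartheta'}$-boundedness and equicontinuity in $t$ of $\{k_t^{(N)}\}$; a monotonicity/compactness argument extracts a limit $k_t$ solving (\ref{J7}) in $\mathcal{K}$ and obeying (\ref{T2}). Uniqueness within $\mathcal{K}$ follows from the usual Ovsyannikov iteration of (\ref{J12}) applied to the difference of two solutions on a shrinking sequence of scales, giving zero on $[0,T]$ for $T$ small and then for all $T>0$ by stepping. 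Evaluating (\ref{J10})--(\ref{J8a}) at $\eta=\emptyset$ yields $\frac{d}{dt}k_t(\emptyset)=0$, hence $k_t(\emptyset)=1$; inclusion in $\mathcal{K}^\star$ is inherited from $k_t^{(N)}\in\mathcal{K}^\star$ by dominated convergence of $\langle\!\langle G,\cdot\rangle\!\rangle$ with envelope (\ref{T2}), so Proposition \ref{Gpn} delivers the state $\mu_t\in\mathcal{P}_{\rm exp}(\Gamma)$ with $k_{\mu_t}=k_t$. Finally, for $m(x)\geq m_*>0$, substituting $\psi_t(x)\leq e^{-m_*t}\leq 1$ and $\phi_t(x)\leq\|b\|/m_*$ into (\ref{T2}) yields $k_t(\eta)\leq C(1+\|b\|/m_*)^{|\eta|}e^{\vartheta_0|\eta|}$, which is $\mathcal{K}_{\vartheta_*}$-boundedness for $\vartheta_*$ as in (\ref{T1}), uniformly in $t>0$.

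The hard part will be Step 2: reconciling the sign structure of $P^\Delta$ with the Ovsyannikov character of $L^\Delta$. On any single space $\mathcal{K}_\vartheta$ in the scale, $P^\Delta$ is unbounded, so a naive ``positive semigroup plus sign-definite perturbation'' argument does not close; the truncation is therefore essential both to give a meaningful Picard series and to extract the pointwise comparison with the Surgailis solution, and the delicate point is to verify that the three properties (positivity, upper bound, $k_t(\emptyset)=1$) survive the limit $N\to\infty$ simultaneously with the differential identity (\ref{J7}).
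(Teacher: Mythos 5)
Your skeleton — compare with the explicitly solvable Surgailis evolution $S_0(t)=W(t)$ to get the envelope (\ref{T2}), use the sign of the competition terms, and get uniqueness by Ovsyannikov iteration of (\ref{J12}) on a scale of spaces — is exactly the paper's strategy for steps (i) and (iii) of its proof: the identity $v_t-k_t=\int_0^t W(t-s)D\,k_s\,ds$ with $D=-P^\Delta\geq 0$ on nonnegative $k$ is precisely how (\ref{T2}) and the continuation to all $t>0$ are obtained there. The genuine gap is in your Step 2, and it is the hardest part of the theorem: the claim that $k_t^{(N)}\in\mathcal{K}^\star_\vartheta$ ``automatically'' because the truncated $L^\Delta_N$ generates ``an honest Markov evolution.'' First, the truncated problem is not a finite-dimensional ODE: functions on configurations $\eta\subset\Lambda_N$, $|\eta|\leq N$, form an infinite-dimensional space (the $n$-point configurations in $\Lambda_N$ are a continuum). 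Second, and more seriously, cutting the hierarchy at $|\eta|\leq N$ severs the coupling $k(\eta)\mapsto k(\eta\cup x)$ in (\ref{J8a}); whatever closure you impose at the top level, the truncated hierarchy is in general \emph{not} the correlation-function evolution of any Markov process, so positive-definiteness in the sense of (\ref{9h})(a) does not come for free, and it is exactly the property you need both for $\mathcal{K}^\star$-membership of the limit and for the lower bound in your sandwich. Third, your Picard argument does not close even granting nonnegativity of the iterates: if $k^{[j]}\geq 0$ then the Duhamel term $\int_0^t S_0(t-s)P^\Delta k^{[j]}_s\,ds$ is $\leq 0$, which gives the upper bound for $k^{[j+1]}$ but destroys any inductive proof of $k^{[j+1]}\geq 0$. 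So the entire burden falls on the unproven Markov claim.

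For comparison, the paper never truncates the operator. It regularizes the immigration rate to an integrable $b_\sigma(x)=b(x)e^{-\sigma|x|^2}$ and truncates the \emph{initial state} (the local densities $R_0^{\Lambda,N}$ of $\mu_0$, see (\ref{A22})); for such data the Fokker--Planck evolution of densities makes sense on $\mathcal{G}_0$ and is shown, via the Thieme--Voigt theorem, to be given by a substochastic semigroup $S^\dagger(t)$ (Proposition \ref{AP1pn}), which is where positivity genuinely enters. The correlation functions $q_t(\eta)=\int (S^\dagger(t)R_0^{\Lambda,N})(\eta\cup\xi)\lambda(d\xi)$ then satisfy $\langle\!\langle G,q_t\rangle\!\rangle\geq 0$ for $G\in B^\star_{\rm bs}$, and a separate, delicate identification (via the intermediate spaces $\mathcal{U}_{\sigma,\vartheta}$ and a $C_0$-semigroup on $\mathcal{G}_\vartheta$, Propositions \ref{AP2pn}--\ref{B2pn}) shows these $q_t$ coincide with $Q^\sigma_{\vartheta_1\vartheta_0}(t)q_0^{\Lambda,N}$; only after the limits $N\to\infty$, $\Lambda\nearrow\mathds{R}^d$ and $\sigma\to 0$ does one conclude $k_t\in\mathcal{K}^\star$ (Lemma \ref{A2lm}). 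If you supply an argument of this type — a density-level, genuinely Markovian approximation together with a proof that its correlation functions solve your truncated/regularized hierarchy — the rest of your outline (envelope, uniqueness, $k_t(\emptyset)=1$, the $m\geq m_*$ case) goes through essentially as in the paper.
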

\begin{theorem}[Global boundedness]
  \label{2tm}
The states $\mu_t$ mentioned in Theorem \ref{1tm} have the property:
for every $n\in \mathds{N}$ and compact $\Lambda
\subset\mathds{R}^d$, the following holds
\begin{equation}
 \label{T200}
\forall t>0 \qquad  \mu_t (N_\Lambda^n) \leq C_\Lambda^{(n)},
 \end{equation}
with some $C_\Lambda^{(n)}>0$. If $\mu_0$ is such that each $k_{\mu_0}^{(n)}$ is a continuous function,
then so is $k_{\mu_t}^{(n)}$ for all $n\in \mathds{N}$ and $t>0$. Moreover, $k^{(1)}_{\mu_t}$ and
$k^{(2)}_{\mu_t}$
have the properties as in (\ref{J1d}).
\end{theorem}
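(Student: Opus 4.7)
The plan is to reduce the moment bound (\ref{T200}) to a uniform-in-$t$ estimate on integrated correlation functions, then to exploit the competition kernel $a$ through a logistic-type differential inequality. The continuity and pointwise statements will follow from the constructive resolution of (\ref{J7}) combined with a pointwise analogue of the logistic argument. First I would expand $N_\Lambda^n$ in Stirling numbers of the second kind as a finite linear combination of factorial moments $\mu_t(N_\Lambda(N_\Lambda-1)\cdots(N_\Lambda-k+1))$, $k=1,\dots,n$. By the $K$-transform identity (\ref{J7a}) applied to $G(\eta) = \mathds{1}\{\eta\subset\Lambda,\,|\eta|=k\}$, each such factorial moment equals $\int_{\Lambda^k} k_t^{(k)}(x_1,\dots,x_k)\,dx_1\cdots dx_k$, so (\ref{T200}) reduces to showing each of these integrals is bounded uniformly in $t>0$.

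For the base case $k=1$, I would extract from (\ref{J10})--(\ref{J9}) the evolution equation for $\Phi_t := \mu_t(N_\Lambda) = \int_\Lambda k_t^{(1)}(x)\,dx$: discarding the nonnegative $m$-term, using $k_t^{(2)}\geq 0$ (from $k_t\in\mathcal{K}^\star$, Theorem \ref{1tm}), and restricting the competition integral in $y$ to $\Lambda$,
\[
\Phi_t' \leq \|b\|\,|\Lambda| - \int_{\Lambda\times\Lambda} a(x-y)\, k_t^{(2)}(x,y)\,dx\,dy.
\]
The crucial step is a coercive lower bound $\int_{\Lambda\times\Lambda} a(x-y)\, k_t^{(2)}(x,y)\,dx\,dy \geq \kappa_\Lambda\,\Phi_t^2 - \kappa_\Lambda'\,\Phi_t$ with constants $\kappa_\Lambda,\kappa_\Lambda'>0$: partition $\Lambda$ into small cells $C_j$ on which $a\geq a_*>0$ pairwise (possible by continuity and nontriviality of $a$; if $\Lambda$ is itself too small, use $\mu_t(N_\Lambda)\leq\mu_t(N_{\Lambda'})$ and enlarge to a suitable $\Lambda'\supset\Lambda$), apply the variance-type inequality $\mu_t(N_{C_j}(N_{C_j}-1))\geq \mu_t(N_{C_j})^2-\mu_t(N_{C_j})$ cell-wise, sum over $j$, and use Cauchy--Schwarz in the form $\sum_j\mu_t(N_{C_j})^2\geq \Phi_t^2/\#\{C_j\}$. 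The resulting logistic inequality $\Phi_t'\leq\alpha+\beta\Phi_t-\gamma\Phi_t^2$ forces $\Phi_t\leq C_\Lambda$ uniformly in $t$ by elementary ODE comparison. The higher-order integrated moments are then handled by induction on $k$ using the evolution equation for $k_t^{(k)}$ and the same coercivity mechanism.

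For the continuity statement I would examine the iterative/Duhamel construction of $k_t$ employed in the proof of Theorem \ref{1tm}: since $a$, $b$, $m$ are continuous and bounded by Assumption \ref{Ass1}, every iterate preserves continuity of $k_t^{(n)}$, and the limit inherits the property. For the global pointwise bounds (\ref{J1d}), a pointwise variant of the logistic argument applies: the competition integral $\int a(x-y)k_t^{(2)}(x,y)\,dy$ appearing in the equation for $k_t^{(1)}(x)$ provides, together with the uniform local moment bounds and continuity just established, a quadratic-in-$k_t^{(1)}(x)$ dissipation uniform in $x$; the analogous argument for $k_t^{(2)}(x,y)$, using its evolution equation and the bound on $k_t^{(1)}$, closes the result.

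The main obstacle is the non-closure of the hierarchy $k_t^{(n)}\leftrightarrow k_t^{(n+1)}$: direct Gronwall-type estimates on (\ref{J12}) give only exponentially-in-$t$ growing bounds, which is useless globally. Closing the estimate through a coercive negative-quadratic contribution coming from the competition term is the delicate point and precisely where the specific structure of the kernel $a$ enters essentially; formula (\ref{T6}) shows that without competition the bounds indeed fail whenever $b$ is not identically zero on $\Lambda$.
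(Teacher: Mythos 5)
Your reduction of (\ref{T200}) to integrated correlation functions via Stirling numbers is exactly the paper's (\ref{J28}), and your logistic argument for the first moment is a legitimate alternative route: the paper never uses a quadratic inequality on a general $\Lambda$, but instead works with the factorial-moment observables $F^{(l)}_\varDelta$ on a single small cell $\varDelta$ on which $a\geq a_\varDelta>0$ pairwise, derives the linear chain $\frac{d}{dt}q^{(l)}_\varDelta\leq b_\varDelta q^{(l-1)}_\varDelta-l a_\varDelta q^{(l)}_\varDelta$ (see (\ref{J35})), gets $q^{(l)}_\varDelta(t)\leq \kappa^l_\varDelta/l!$ for \emph{all} $l$ by induction (Lemma \ref{J3lm}), and only then assembles a general compact $\Lambda$ from cells via $N_\Lambda^{2^s}\leq m^{2^s-1}\sum_l N^{2^s}_{\varDelta_l}$ and Touchard polynomials. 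Where your plan is genuinely incomplete is the sentence that higher-order moments are ``handled by induction using the same coercivity mechanism'': the Jensen inequality $\mu_t(N_{C_j}^2)\geq\mu_t(N_{C_j})^2$ which closed your $k=1$ estimate has no verbatim analogue for the $k$-th integrated factorial moment over a general $\Lambda$; the competition term in the equation for $k_t^{(k)}$ couples to $k_t^{(k+1)}$, and converting it into a term dissipative in $\int_{\Lambda^k}k_t^{(k)}$ needs an additional argument (a pigeonhole over cells combined with a superlinearity/Jensen estimate for factorial moments, or the paper's small-cell localization, which produces dissipation \emph{linear} in $q^{(l)}_\varDelta$ with rate $la_\varDelta$). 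This is the very non-closure problem you flag at the end, and it is asserted rather than resolved; it is fixable, but not with the $k=1$ inequality alone.

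The genuine gap is in your treatment of (\ref{J1d}). There is no pointwise logistic mechanism: the only dissipative term in the equation for $k^{(1)}_t(x)$ is $-\int_{\mathds{R}^d}a(x-y)k^{(2)}_t(x,y)\,dy$, and for a general sub-Poissonian state $k^{(2)}_t(x,y)$ admits no lower bound in terms of $k^{(1)}_t(x)k^{(1)}_t(y)$, nor in terms of $k^{(1)}_t(x)$ alone (strongly repulsive states have $k^{(2)}$ nearly vanishing where $k^{(1)}$ is large), so the claimed ``quadratic-in-$k^{(1)}_t(x)$ dissipation uniform in $x$'' is simply not available, and uniform local moment bounds plus continuity cannot manufacture such a correlation inequality. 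Note also that a quadratic (logistic) cell estimate would not suffice anyway: it caps $q^{(1)}_\varDelta(t)$ at a level of order one rather than of order of the volume of $\varDelta$, so the bound is destroyed upon dividing by the volume and shrinking the cell to a point. The paper's derivation of (\ref{J39}) uses precisely the linear dissipation $-a_\varDelta q^{(1)}_\varDelta$, whose stationary level $b_\varDelta/a_\varDelta$ scales with the cell volume, and then shrinks $\varDelta$ to $x$; the off-diagonal bound on $k^{(2)}_t$ is obtained not from its evolution equation (which would couple to $k^{(3)}_t$ and meet the same obstruction) but from positivity of $\mu_t(F_h)$ with $F_h(\gamma)=[\sum_{z\in\gamma}(I_{\varDelta_x}(z)-I_{\varDelta_y}(z))]^2$, giving $2\int_{\varDelta_x}\int_{\varDelta_y}k^{(2)}_t(z_1,z_2)\,dz_1\,dz_2\leq q^{(2)}_{\varDelta_x}(t)+q^{(2)}_{\varDelta_y}(t)$, followed by $h\to0$. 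Neither device appears in your proposal, so the last assertion of the theorem is not established by it; your continuity argument, by contrast, is the same as the paper's, which simply invokes \cite{FKK}.
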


\subsection{Comments and comparison}

By (\ref{T6}) it follows that the global in time boundedness in the
Surgailis model is possible only if $m(x) \geq m_*>0$ for all $x\in
\mathds{R}^d$. As follows from our Theorem \ref{2tm}, adding
competition to the Surgailis model with the zero intrinsic mortality
rate yields the global in time boundedness. In this case, the
competition rate $a(0)$ appears to be an effective mortality, see
the proof of Theorem \ref{2tm} and (\ref{J39}) in particular. Note
also that the global boundedness as in Theorem \ref{2tm} does not
mean that the evolution $k_{\mu_0} \mapsto k_t$ holds in one and the
same $\mathcal{K}_\vartheta$ with sufficiently large $\vartheta$. It
does if $m(x) \geq m_*>0$. Since Theorem \ref{1tm} covers also the
case $a \equiv 0$, the solution in (\ref{T5}) is unique in the same
sense. A partial result on the global boundedness in the model
discussed here was obtained in \cite[Theorem 1]{DimaR}. Therein,
under quite a strong condition imposed on the competition kernel $a$
(which, in particular, implies that it has infinite range), and
under the assumption that the evolution of states $\mu_0 \mapsto
\mu_t$ exists, there was proved the fact which in the present
notations can be formulated as $\mu_t(N_\Lambda) \leq C_\Lambda$.

\section{The Existence of the Evolution of States}

We follow the line of arguments used in proving Theorem 3.3 in
\cite{KK} and perform the following three steps:
\begin{itemize}
  \item[(i)] Defining the Cauchy  problem  (\ref{J7}) with $k_{\mu_0 }\in \mathcal{K}_{\vartheta_0}$ in a given
  Banach space $\mathcal{K}_\vartheta$ with $\vartheta >\vartheta_0$, see (\ref{18}) and (\ref{19}), and then
  showing that this problem has a unique solution $k_t\in
  \mathcal{K} _\vartheta$ on a bounded time interval
  $[0,T(\vartheta, \vartheta_0))$ (subsection 3.1).
\item[(ii)] Proving that the mentioned solution $k_t$ has
properties (a) and (b) in (\ref{9h}) ((c) follows by the fact that
$k_t\in \mathcal{K}_\vartheta$). Then $k_t \in
\mathcal{K}_\vartheta^\star$ and hence also in
$\mathcal{K}_\vartheta^{+}$, see (\ref{19b}) and  (\ref{19a}). By
Proposition \ref{Gpn} it follows that $k_t$ is the correlation
function of a unique state $\mu_t$ (subsection 3.2).
\item[(iii)]
Constructing a continuation of $k_t$ from $[0,T(\vartheta,
\vartheta_0))$ to all $t>0$ by means of the fact that $k_t \in
\mathcal{K}_\vartheta^{+}$
 (subsection 3.3).
\end{itemize}

\subsection{Solving the Cauchy problem}

We begin by rewriting $L^\Delta$ (given in (\ref{J10}), (\ref{J8a}))
in the following form
\begin{eqnarray}
  \label{A1}
L^\Delta & = & A + B, \\[.2cm]
(Ak)(\eta)& = & - E (\eta) k(\eta) , \nonumber \\[.2cm]
(Bk)(\eta)& = & - \int_{\mathds{R}^d} \left( \sum_{y\in \eta} a(x-y)
\right) k(\eta \cup x) dx + \sum_{x\in \eta} b(x) k(\eta \setminus
x). \nonumber
\end{eqnarray}
For $\vartheta\in \mathds{R}$ and $\vartheta'>\vartheta$, let
$\mathcal{L}(\mathcal{K}_{\vartheta}, \mathcal{K}_{\vartheta'})$ be
the Banach space of all bounded linear operators acting from
$\mathcal{K}_{\vartheta}$ to $\mathcal{K}_{\vartheta'}$. Like in
(\ref{J12}) we define $A_{\vartheta'\vartheta},
B_{\vartheta'\vartheta}\in \mathcal{L}(\mathcal{K}_{\vartheta},
\mathcal{K}_{\vartheta'})$, satisfying
\begin{equation}
  \label{A2}
 \|A_{\vartheta'\vartheta} \|\leq \frac{4\|a\|}{e^2(\vartheta' -
 \vartheta)^2} + \frac{ \|m\|}{e(\vartheta' -
 \vartheta)}, \qquad  \|B_{\vartheta'\vartheta} \|\leq \frac{\|b\|e^{-\vartheta}+  \langle a \rangle e^{\vartheta'}}{e(\vartheta' -
 \vartheta)}.
\end{equation}
Now we set,  see (\ref{J9}),
\begin{equation}
  \label{A2a}
 (S (t) k)( \eta) = \exp\left( - t E(\eta) \right) k(\eta),
 \qquad t\geq 0,
\end{equation}
and then introduce the corresponding $S_{\vartheta'\vartheta}(t) \in
\mathcal{L}(\mathcal{K}_{\vartheta}, \mathcal{K}_{\vartheta'})$,
$t\geq 0$. By the first estimate in (\ref{A2}) one shows that the
map
\begin{equation}
  \label{A3}
[0,+\infty ) \ni t \mapsto S_{\vartheta'\vartheta}(t) \in
\mathcal{L}(\mathcal{K}_{\vartheta}, \mathcal{K}_{\vartheta'})
\end{equation}
is continuous and such that
\begin{equation}
  \label{A4}
  \frac{d}{dt} S_{\vartheta'\vartheta}(t) = A_{\vartheta'
  \vartheta''} S_{\vartheta''\vartheta}(t), \qquad t >0,
\end{equation}
holding for each $\vartheta'' \in (\vartheta, \vartheta')$. Note
that (\ref{A2a}) may be used to define a bounded multiplication
operator, $S_{\vartheta}(t)\in \mathcal{L}(\mathcal{K}_\vartheta) :=
\mathcal{L}(\mathcal{K}_\vartheta,\mathcal{K}_\vartheta)$. However,
in this case the map $[0,+\infty ) \ni t \mapsto S_{\vartheta}(t)
\in \mathcal{L}(\mathcal{K}_{\vartheta})$ would not be continuous.

For $\vartheta$ and $\vartheta'>\vartheta$ as above, we fix some
$\delta < \vartheta' - \vartheta$. Then, for a given $l\in
\mathds{N}$, we divide the interval $[\vartheta, \vartheta']$ into
subintervals with endpoints $\vartheta^s$, $s=0, 1 , \dots , 2l+1$,
as follows. Set $\vartheta^0= \vartheta$, $\vartheta^{2l+1} =
\vartheta'$, and
\begin{eqnarray}
  \label{A5}
  \vartheta^{2s} & = & \vartheta + \frac{s}{l+1}\delta + s \epsilon,
  \qquad \epsilon:= (\vartheta' - \vartheta - \delta)/l,\\[.2cm]
 \vartheta^{2s+1} & = & \vartheta + \frac{s+1}{l+1}\delta + s \epsilon,
  \qquad s =0, 1, \dots , l.\nonumber
\end{eqnarray}
Then, for $t>0$ and
\[
(t, t_1 , \dots , t_l) \in \mathcal{T}_l :=\{(t, t_1 , \dots , t_l)
: 0\leq t_l \leq t_{l-1} \cdots \leq t_1 \leq t\}\subset
\mathds{R}^{l+1},
\]
define
\begin{eqnarray}
  \label{A6}
\Pi^{(l)}_{\vartheta'\vartheta} (t, t_1 , \dots , t_l)& = &
S_{\vartheta'\vartheta^{2l}}(t-t_1) B_{\vartheta^{2l}
\vartheta^{2l-1}} \cdots
S_{\vartheta^{2s+1}\vartheta^{2s}}(t_{l-s}-t_{l-s+1})
 \\[.2cm]
\nonumber &\times& B_{\vartheta^{2s} \vartheta^{2s-1}} \cdots
S_{\vartheta^{3}\vartheta^{2}}(t_{l-1}-t_{l}) B_{\vartheta^{2}
\vartheta^{1}}S_{\vartheta^{i}\vartheta}(t_l).
\end{eqnarray}
By (\ref{A4}), (\ref{A3}) and (\ref{A1}) one can prove the next
statement, cf \cite[Proposition 4.6]{KK}.
\begin{proposition}
  \label{A1pn}
 For each $l\in \mathds{N}$, the operators defined in (\ref{A6})
 have  the properties:
 \begin{itemize}
   \item[(i)] for each $(t, t_1 , \dots , t_l) \in \mathcal{T}_l$, $\Pi^{(l)}_{\vartheta'\vartheta} (t, t_1 , \dots ,
   t_l)$ is in $\mathcal{L}(\mathcal{K}_{\vartheta},
   \mathcal{K}_{\vartheta'})$ and the map
  \[
(t, t_1 , \dots , t_l) \mapsto \Pi^{(l)}_{\vartheta'\vartheta} (t,
t_1 , \dots ,
   t_l)\in \mathcal{L}(\mathcal{K}_{\vartheta},
   \mathcal{K}_{\vartheta'})
  \]
  is continuous;
\item[(ii)] for fixed $t_1 , \dots , t_l$ and each $\varepsilon >0$,
the map
\[
(t_1 , t_1 + \varepsilon) \ni t \mapsto
\Pi^{(l)}_{\vartheta'\vartheta} (t, t_1 , \dots ,
   t_l)\in \mathcal{L}(\mathcal{K}_{\vartheta},
   \mathcal{K}_{\vartheta'})
\]
is continuously differentiable and such that, for each
$\vartheta''\in (\vartheta , \vartheta')$, the following holds
\begin{equation}
  \label{A7}
 \frac{d}{dt} \Pi^{(l)}_{\vartheta'\vartheta} (t, t_1 , \dots ,
   t_l) = A_{\vartheta'
  \vartheta''} \Pi^{(l)}_{\vartheta''\vartheta} (t, t_1 , \dots ,
   t_l).
\end{equation}
 \end{itemize}
\end{proposition}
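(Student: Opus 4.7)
The plan is to verify each clause of Proposition \ref{A1pn} by unfolding the composition (\ref{A6}) and invoking the already-established properties of its building blocks $S_{\vartheta'\vartheta}(t)$ and $B_{\vartheta'\vartheta}$. For part (i), I would first check that (\ref{A6}) makes sense as an element of $\mathcal{L}(\mathcal{K}_\vartheta,\mathcal{K}_{\vartheta'})$: by the choice of the partition $\vartheta = \vartheta^0 < \vartheta^1 < \cdots < \vartheta^{2l+1} = \vartheta'$ in (\ref{A5}), each $B_{\vartheta^{2s}\vartheta^{2s-1}}$ maps $\mathcal{K}_{\vartheta^{2s-1}}$ into $\mathcal{K}_{\vartheta^{2s}}$ with norm bounded as in (\ref{A2}), and each $S_{\vartheta^{2s+1}\vartheta^{2s}}(\cdot)$ maps $\mathcal{K}_{\vartheta^{2s}}$ into $\mathcal{K}_{\vartheta^{2s+1}}$, so the alternating composition bridges $\vartheta$ to $\vartheta'$. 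Joint continuity on $\mathcal{T}_l$ then follows from the continuity of each $S$-factor, cf.\ (\ref{A3}), the uniform boundedness of the $B$-factors, and the standard Leibniz-type estimate for a product of continuous operator-valued maps on an ordered chain of Banach spaces.

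For part (ii), with $t_1,\ldots,t_l$ fixed, only the leftmost factor $S_{\vartheta'\vartheta^{2l}}(t-t_1)$ depends on $t$, and for $t\in(t_1,t_1+\varepsilon)$ one has $t-t_1>0$, so (\ref{A4}) applies. Given any $\vartheta''\in(\vartheta,\vartheta')$, I would choose the partition (\ref{A5}) so that $\vartheta''$ lies strictly between $\vartheta^{2l}$ and $\vartheta'$; then by (\ref{A4})
\[
\frac{d}{dt}S_{\vartheta'\vartheta^{2l}}(t-t_1) = A_{\vartheta'\vartheta''}\,S_{\vartheta''\vartheta^{2l}}(t-t_1),
\]
and composing with the remaining $(t\text{-independent})$ factors on the right yields (\ref{A7}), where the product $S_{\vartheta''\vartheta^{2l}}(t-t_1)B_{\vartheta^{2l}\vartheta^{2l-1}}\cdots S_{\vartheta^i\vartheta}(t_l)$ is precisely the object $\Pi^{(l)}_{\vartheta''\vartheta}(t,t_1,\ldots,t_l)$ built over the refined partition $\vartheta^0<\cdots<\vartheta^{2l}<\vartheta''$. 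Continuous differentiability follows because this derivative is itself a composition of the type treated in part (i).

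The main obstacle is bookkeeping rather than analysis: one must arrange the $2l+1$ intermediate points so that (a) the composition in (\ref{A6}) lives in the right space for every admissible $(t,t_1,\ldots,t_l)$, (b) for an arbitrary prescribed $\vartheta''\in(\vartheta,\vartheta')$ the partition can be (re)chosen to accommodate the split required by (\ref{A4}), and (c) the term-by-term differentiation commutes with the composition in the operator-norm sense. Once $\delta$ and $\epsilon$ in (\ref{A5}) are fixed so that each gap $\vartheta^{s+1}-\vartheta^s$ is positive, the bounds in (\ref{A2}) stay finite and the argument reduces to the Leibniz rule for a finite product of operator-valued functions, entirely parallel to \cite[Proposition 4.6]{KK}.
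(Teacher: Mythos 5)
Your argument is correct and follows exactly the route the paper indicates: it states this proposition without a detailed proof, citing precisely the ingredients you use — the decomposition (\ref{A1}) with the bounds (\ref{A2}), the continuity of the map in (\ref{A3}), and the differentiation rule (\ref{A4}) — together with \cite[Proposition 4.6]{KK}. Your handling of the partition so that a given $\vartheta''$ can be accommodated (using that the composed operator does not depend on the choice of intermediate points) is the same bookkeeping implicit in the paper's construction, so there is nothing to add.
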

Define
\begin{equation}
  \label{A8}
T(\vartheta', \vartheta) = \frac{\vartheta' -
\vartheta}{\|b\|e^{-\vartheta} + \langle a \rangle e^{\vartheta'}}.
\end{equation}
Then assume that $k_{\mu_0 }\in \mathcal{K}_{\vartheta_0}$, fix some
$\vartheta_1 >\vartheta_0$, and set
\begin{equation}
  \label{A9}
 \varUpsilon = \{(\vartheta, \vartheta', t): \vartheta_0 \leq \vartheta < \vartheta' \leq \vartheta_1, \quad t < T(\vartheta',
 \vartheta)\}.
\end{equation}
\begin{proposition}
  \label{A2pn}
There exists a family of linear operators, $\{Q_{\vartheta'
\vartheta}(t):  (\vartheta, \vartheta', t) \in \varUpsilon\}$, each
element of which is in the corresponding
$\mathcal{L}(\mathcal{K}_{\vartheta},
   \mathcal{K}_{\vartheta'})$ and has the following properties:
\begin{itemize}
  \item[(i)] the map $[0,T(\vartheta', \vartheta)) \ni t \mapsto Q_{\vartheta'
\vartheta}(t) \in \mathcal{L}(\mathcal{K}_{\vartheta},
   \mathcal{K}_{\vartheta'})$ is continuous and $ Q_{\vartheta'
\vartheta}(0)$ is the embedding $\mathcal{K}_\vartheta
\hookrightarrow \mathcal{K}_{\vartheta'}$;
\item[(ii)] for each $\vartheta'' \in  (\vartheta, \vartheta')$ and
$t < T(\vartheta'', \vartheta)$, the following holds
\begin{equation}
  \label{A10}
  \frac{d}{dt} Q_{\vartheta' \vartheta}(t) =
  L^\Delta_{\vartheta'\vartheta''} Q_{\vartheta'' \vartheta}(t).
\end{equation}
\end{itemize}
\end{proposition}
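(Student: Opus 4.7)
The plan is to construct $Q_{\vartheta'\vartheta}(t)$ as the Dyson-type series
\[
Q_{\vartheta'\vartheta}(t) = S_{\vartheta'\vartheta}(t) + \sum_{l=1}^\infty \int_{\mathcal{T}_l} \Pi^{(l)}_{\vartheta'\vartheta}(t, t_1, \ldots, t_l)\, dt_1 \cdots dt_l,
\]
which formally arises by iterating the Duhamel formula $Q(t) = S(t) + \int_0^t S(t-s) B Q(s)\, ds$ associated to the splitting $L^\Delta = A+B$ from (\ref{A1}). The difficulty that $B$ loses regularity across the scale is absorbed by the partition (\ref{A5}), whose parameter $\delta \in (0, \vartheta'-\vartheta)$ will be adjusted depending on $t$.

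For convergence, note first that $\|S_{\vartheta_1\vartheta_2}(t)\| \leq 1$ because $E(\eta)\geq 0$ and $\vartheta_1\geq \vartheta_2$. Using the second inequality in (\ref{A2}) on each of the $l$ $B$-factors in (\ref{A6}), whose ``thick'' gap is $\varepsilon=(\vartheta'-\vartheta-\delta)/l$, gives
\[
\|\Pi^{(l)}_{\vartheta'\vartheta}(t, t_1, \ldots, t_l)\| \leq \left(\frac{\|b\|e^{-\vartheta}+\langle a\rangle e^{\vartheta'}}{e\varepsilon}\right)^l.
\]
Integration over $\mathcal{T}_l$ (volume $t^l/l!$) together with $l^l/l! \leq e^l$ yields the geometric bound $\bigl(t(\|b\|e^{-\vartheta}+\langle a\rangle e^{\vartheta'})/(\vartheta'-\vartheta-\delta)\bigr)^l$. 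For $t<T(\vartheta',\vartheta)$ as in (\ref{A8}), choosing $\delta$ small enough makes this ratio $<1$, so the series converges absolutely in $\mathcal{L}(\mathcal{K}_\vartheta,\mathcal{K}_{\vartheta'})$, uniformly on compact subintervals of $[0,T(\vartheta',\vartheta))$. Continuity of $t\mapsto Q_{\vartheta'\vartheta}(t)$ then follows from the continuity asserted in Proposition \ref{A1pn}(i), and at $t=0$ the simplex $\mathcal{T}_l$ degenerates for $l\geq 1$, leaving only $S_{\vartheta'\vartheta}(0)$, which is the embedding $\mathcal{K}_\vartheta\hookrightarrow\mathcal{K}_{\vartheta'}$. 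This establishes (i).

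For (ii), fix $\vartheta''\in(\vartheta,\vartheta')$ and $t<T(\vartheta'',\vartheta)$ and differentiate the series term by term (legitimate by the locally uniform convergence proven above applied at a slightly larger $\tilde\vartheta<\vartheta''$). For each $l\geq 1$, Leibniz produces two contributions: the interior derivative, which by Proposition \ref{A1pn}(ii) gives $A_{\vartheta'\vartheta''}$ applied to the $l$-th term of the series defining $Q_{\vartheta''\vartheta}(t)$; and the boundary value at $t_1=t$, where the leading factor $S_{\vartheta'\vartheta^{2l}}(0)$ reduces to an embedding, so that the remaining $B$-factor followed by the $(l-1)$-dimensional integral of $\Pi^{(l-1)}$ reassembles into $B_{\vartheta'\vartheta''}$ applied to the $(l-1)$-th term of $Q_{\vartheta''\vartheta}(t)$. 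Summing over $l$ and invoking $L^\Delta_{\vartheta'\vartheta''} = A_{\vartheta'\vartheta''} + B_{\vartheta'\vartheta''}$ yields (\ref{A10}).

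The main obstacle is the bookkeeping in the last step: each $\Pi^{(l)}$ has its own subdivision of $[\vartheta,\vartheta']$, and the identification of the $t_1=t$ boundary value of $\Pi^{(l)}_{\vartheta'\vartheta}$ with $B_{\vartheta'\vartheta''}$ composed with $\Pi^{(l-1)}_{\vartheta''\vartheta}$ requires telescoping the nested embeddings and checking that norms are preserved when passing from the ``internal'' intermediate levels $\vartheta^{2s}$ used to define $Q_{\vartheta'\vartheta}$ to the single cut $\vartheta''$ appearing in (\ref{A10}). This is precisely why the derivative in (ii) must be expressed through the auxiliary level $\vartheta''$ rather than as a genuine semigroup identity on a single Banach space.
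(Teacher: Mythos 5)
Your proposal is correct and follows essentially the same route as the paper: the paper also builds $Q_{\vartheta'\vartheta}(t)$ from the operators $\Pi^{(l)}_{\vartheta'\vartheta}$ of (\ref{A6}) (as a Cauchy sequence of partial sums $Q^{(n)}$ rather than a directly summed Dyson series), using $\|S\|\le 1$, the $B$-bound in (\ref{A2}), the simplex volume $t^l/l!$ and Stirling to get geometric convergence for $t<T(\vartheta',\vartheta)$ with a suitably small $\delta$, continuity and the embedding property at $t=0$ from uniform convergence, and (\ref{A10}) by term-by-term differentiation via (\ref{A7}) together with the $t_1=t$ boundary term producing the $B$-part. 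Your explicit Leibniz bookkeeping for the boundary terms is exactly the step the paper compresses into ``follows from (\ref{A7}) by the same arguments'' (deferring to the analogous lemma in \cite{KK}), so there is no substantive difference.
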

\begin{proof}
We  go along the line of arguments used in the proof of Lemma 4.5 in
\cite{KK}. Take any $T< T(\vartheta', \vartheta)$ and then pick
$\vartheta'' \in (\vartheta, \vartheta']$ and a positive $\delta<
\vartheta'' - \vartheta$ such that also $T< T_\delta :=
T(\vartheta''-\delta, \vartheta)$. For these values of the
parameters, take $\Pi^{(l)}_{\vartheta''\vartheta}$ as in (\ref{A6})
and then, for $n\in \mathds{N}$,  set
\begin{eqnarray}
 \label{A10a}
 Q_{\vartheta'' \vartheta}^{(n)} (t) = S_{\vartheta''\vartheta} (t)  +
 \sum_{l=1}^n \int_0^t \int_0^{t_1} \cdots \int_0^{t_{l-1}} \Pi^{(l)}_{\vartheta'' \vartheta} (t, t_1 \dots, t_l ) d t_l \cdots d t_1.
\end{eqnarray}
By (\ref{A2a}) and the second estimate in (\ref{A2}) we have from
(\ref{A6}) that
\[
 \|\Pi^{(l)}_{\vartheta'' \vartheta} (t,t_1 , \dots , t_l)\| \leq \left( \frac{l}{e T_\delta}\right)^l,
\]
holding for all $l=1, \dots , n$. By (\ref{A10a}), for $t\in [0,T)$,
this yields
\begin{gather*}
\|Q_{\vartheta'' \vartheta}^{(n)} (t) - Q_{\vartheta''
\vartheta}^{(n-1)} (t) \| \leq
\int_0^t \int_0^{t_1} \cdots \int_0^{t_{n-1}}\| \Pi^{(n)}_{\vartheta'' \vartheta} (t, t_1 \dots, t_n )\| d t_l \cdots d t_n \\[.2cm]
\leq \frac{1}{n!} \left(\frac{n}{e} \right)^n
\left(\frac{T}{T_\delta}\right)^n.
\end{gather*}
Hence, for all $t\in [0,T]$, $\{ Q_{\vartheta'' \vartheta}^{(n)} (t)
\}_{n\in \mathds{N}}$ is a Cauchy sequence in
$\mathcal{L}(\mathcal{K}_{\vartheta},\mathcal{K}_{\vartheta''} )$.
The operator $Q_{\vartheta'' \vartheta} (t)$ in question is then its
limit. The continuity in (i) follows by the fact that the
convergence to $Q_{\vartheta'' \vartheta} (t)$ is uniform on
$[0,T]$. Moreover, by (\ref{A10a}) we have that $Q_{\vartheta''
\vartheta}^{(n)} (0) = S_{\vartheta''\vartheta} (0)$, cf.
(\ref{A2a}), which yields the stated property of $Q_{\vartheta''
\vartheta}(0)$. Finally, (\ref{A10}) follows from (\ref{A7}) by the
same arguments.
\end{proof}
From (\ref{A10}) one can get that the family mentioned in
Proposition \ref{A2pn} enjoys the following `semigroup' property
\begin{equation}
  \label{A100}
 Q_{\vartheta' \vartheta} (t+s) = Q_{\vartheta' \vartheta''} (t) Q_{\vartheta'' \vartheta}
 (s),
\end{equation}
holding whenever $(\vartheta, \vartheta', t+s)$, $(\vartheta'',
\vartheta', t)$, and $(\vartheta, \vartheta'', s)$ are in
$\varUpsilon$.

Now we make precise which Cauchy problem we are going to solve. Set
\begin{equation}
  \label{A1a}
 \mathcal{D}_\vartheta = \{ k\in \mathcal{K}_\vartheta: L^\Delta k
 \in \mathcal{K}_\vartheta\},
\end{equation}
where $L^\Delta$ is as in (\ref{J10}). This defines an unbounded
linear operator $L^\Delta_\vartheta: \mathcal{D}_\vartheta \to
\mathcal{K}_\vartheta$, being the extension of the operators
$L^\Delta_{\vartheta'' \vartheta_0}: \mathcal{K}_{\vartheta_0} \to
\mathcal{K}_{\vartheta''} \hookrightarrow \mathcal{K}_\vartheta$
with $\vartheta'' \in (\vartheta_0, \vartheta)$ and all
$\vartheta_0< \vartheta$, cf. (\ref{19}). Then we consider the
Cauchy problem (\ref{J7}) in $\mathcal{K}_{\vartheta_1}$ with this
operator $L^\Delta_{\vartheta_1}$ and $k_{\mu_0}\in
\mathcal{K}_{\vartheta_0}$. By its (classical) solution we
understand the corresponding map $t \mapsto k_t \in
\mathcal{D}_{\vartheta_1}$, continuously differentiable in
$\mathcal{K}_{\vartheta_1}$.
\begin{lemma}
  \label{A1lm}
Let $\vartheta_0$ and $\vartheta_1$ be as in (\ref{A9}). Then for
each $k_{\mu_0} \in \mathcal{K}_{\vartheta_0}$, the problem
(\ref{J7}) as described above, cf (\ref{A1c}) below, has a unique
solution $k_t\in \mathcal{K}_{\vartheta_1}$ with $t \in [0,
T(\vartheta_1, \vartheta_0))$ given by the formula
\begin{equation}
  \label{A11}
 k_t = Q_{\vartheta_1 \vartheta_0}(t)k_{\mu_0},
\end{equation}
such that $k_t(\emptyset)=1$ for all $t \in [0, T(\vartheta_1,
\vartheta_0))$.
\end{lemma}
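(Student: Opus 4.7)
My plan is to adopt $k_t := Q_{\vartheta_1 \vartheta_0}(t) k_{\mu_0}$, as dictated by (\ref{A11}), as the candidate solution, and verify, in order, (i) the initial condition and continuous differentiability, (ii) $k_t \in \mathcal{D}_{\vartheta_1}$ together with the ODE $\frac{d}{dt} k_t = L^\Delta_{\vartheta_1} k_t$, (iii) the normalization $k_t(\emptyset) = 1$, and (iv) uniqueness.

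Steps (i)--(iii) follow the scale-of-Banach-spaces machinery already set up. For (i), the initial condition is immediate from Proposition \ref{A2pn}(i), since $Q_{\vartheta_1\vartheta_0}(0)$ is the embedding $\mathcal{K}_{\vartheta_0}\hookrightarrow \mathcal{K}_{\vartheta_1}$, and continuous differentiability in $\mathcal{K}_{\vartheta_1}$ is inherited from Proposition \ref{A2pn}(ii). For (ii), the key manoeuvre is that for any $t\in [0, T(\vartheta_1,\vartheta_0))$, the continuity of $T(\cdot,\vartheta_0)$ in (\ref{A8}) allows one to pick $\vartheta'' \in (\vartheta_0,\vartheta_1)$ with $t<T(\vartheta'',\vartheta_0)$; the semigroup identity (\ref{A100}) taken with time parameters $0$ and $t$ gives $Q_{\vartheta_1\vartheta_0}(t) = Q_{\vartheta_1\vartheta''}(0)\,Q_{\vartheta''\vartheta_0}(t)$, so that $k_t$ is the image of $Q_{\vartheta''\vartheta_0}(t)k_{\mu_0}\in \mathcal{K}_{\vartheta''}$ under the embedding into $\mathcal{K}_{\vartheta_1}$. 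Applying (\ref{A10}) to $k_{\mu_0}$ then produces $\frac{d}{dt}k_t = L^\Delta_{\vartheta_1\vartheta''}k_t \in \mathcal{K}_{\vartheta_1}$, which simultaneously places $k_t$ in $\mathcal{D}_{\vartheta_1}$ in the sense of (\ref{A1a}) and verifies the equation. Step (iii) is quick: direct inspection of (\ref{J10}) and (\ref{J8a}) shows $(L^\Delta k)(\emptyset)=0$ for every $k$, since $E(\emptyset)=0$, the sum over $x\in\eta$ is empty at $\eta=\emptyset$, and $\sum_{y\in \emptyset} a(x-y)=0$; composing the ODE with the continuous evaluation functional $k \mapsto k(\emptyset)$ on $\mathcal{K}_{\vartheta_1}$ then yields $k_t(\emptyset) = k_{\mu_0}(\emptyset) = 1$.

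The main obstacle is (iv). Given a second solution $\tilde k_t$, the difference $v_t := k_t - \tilde k_t$ is continuously differentiable in $\mathcal{K}_{\vartheta_1}$, satisfies $\frac{d}{dt} v_t = L^\Delta v_t$, and vanishes at $t=0$. My plan is to split $L^\Delta = A+B$ as in (\ref{A1}), absorb the multiplicative part $A$ into the semigroup $S(t)$ of (\ref{A2a}), and convert the ODE into a Duhamel-type integral equation
\[
 v_t = \int_0^t S_{\vartheta_1\vartheta''}(t-s)\, B_{\vartheta''\vartheta_0'}\, v_s \, ds
\]
for an auxiliary $\vartheta_0' < \vartheta'' < \vartheta_1$. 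Iterating along a telescoping chain of intermediate spaces exactly in the spirit of (\ref{A5})--(\ref{A6}) and re-using the factorial estimate $\|\Pi^{(n)}_{\vartheta''\vartheta}\| \le (n/(eT_\delta))^n$ from the proof of Proposition \ref{A2pn} then dominates the $n$-fold iterate by $(t/T_\delta)^n/n!$, which tends to $0$ whenever $t<T(\vartheta_1,\vartheta_0)$, forcing $v_t\equiv 0$. The subtle part is the bookkeeping required to make the chain of intermediate spaces close up so that the contractive bound actually materialises; this mirrors precisely the scheme that was needed in constructing $Q_{\vartheta'\vartheta}(t)$ itself, so the real task is transferring that argument from a construction to a rigidity statement.
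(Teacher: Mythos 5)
Your existence, domain, and normalization steps are exactly the paper's: the paper likewise picks, for a given $t<T(\vartheta_1,\vartheta_0)$, some $\vartheta''\in(\vartheta_0,\vartheta_1)$ with $t<T(\vartheta'',\vartheta_0)$, obtains $\frac{d}{dt}k_t=L^\Delta_{\vartheta_1\vartheta''}k_t=L^\Delta_{\vartheta_1}k_t$ from (\ref{A10}), gets $k_0=k_{\mu_0}$ from Proposition \ref{A2pn}(i), and derives $k_t(\emptyset)=1$ from $(L^\Delta k)(\emptyset)=0$, cf.\ (\ref{J8a})--(\ref{J10}). The divergence is in uniqueness, which the paper does not reproduce but delegates to the arguments of Lemma 4.8 in \cite{KK}. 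Your Duhamel-iteration plan is that argument in spirit, but as written it has a genuine defect. A competing solution $\tilde k_t$ is only assumed to be a continuously differentiable $\mathcal{K}_{\vartheta_1}$-valued solution, so $v_t=k_t-\tilde k_t$ is known to lie only in $\mathcal{K}_{\vartheta_1}$ (more precisely in $\mathcal{D}_{\vartheta_1}$); the operators $B_{\vartheta'\vartheta}$ act boundedly only from a \emph{smaller} to a \emph{larger} space of the scale (\ref{A2}), so the factor $B_{\vartheta''\vartheta_0'}v_s$ with $\vartheta_0'<\vartheta''<\vartheta_1$ does not typecheck: you cannot place the telescoping chain inside $[\vartheta_0,\vartheta_1]$. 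The chain must ascend from $\vartheta_1$ into some $\mathcal{K}_{\vartheta_2}$ with $\vartheta_2>\vartheta_1$, and then the factorial estimate annihilates the $n$-fold iterate only for $t$ up to roughly $T(\vartheta_2,\vartheta_1)$, a quantity that by (\ref{A8}) remains bounded (and eventually decreases) as $\vartheta_2$ grows and bears no relation to $T(\vartheta_1,\vartheta_0)$. Hence your claim that the bound tends to zero ``whenever $t<T(\vartheta_1,\vartheta_0)$'' is not justified as stated.

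The gap is repairable and the repair is the argument the paper actually appeals to: run the upward iteration to conclude $v_t=0$ in $\mathcal{K}_{\vartheta_2}$ -- hence in $\mathcal{K}_{\vartheta_1}$, the embeddings (\ref{19}) being injective -- for all $t\le\tau$ with a fixed $\tau>0$ determined by $\vartheta_1$, $\vartheta_2$ and the constants in (\ref{A2}); then restart at time $\tau$, observing that both solutions are $\mathcal{K}_{\vartheta_1}$-valued and coincide there, so the same local argument applies on $[\tau,2\tau]$, and finitely many repetitions exhaust $[0,T(\vartheta_1,\vartheta_0))$. With this correction (chain placed above $\vartheta_1$ plus continuation in time) your uniqueness proof matches the cited one; without it, the uniqueness step as sketched does not go through.
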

\begin{proof}
For each $t< T(\vartheta_1, \vartheta_0)$, one finds $\vartheta''
\in (\vartheta_0, \vartheta_1)$ such that also $t< T(\vartheta'',
\vartheta_0)$. By (\ref{A10}) we then get
\begin{equation}
 \label{A1c}
\frac{d}{dt} k_t = L^\Delta_{\vartheta_1 \vartheta''} k_t =
L^\Delta_{\vartheta_1}k_t.
\end{equation}
By claim (i) of Proposition \ref{A2pn} we have that $k_0 =
k_{\mu_0}$. Moreover, $k_t(\emptyset)=1$ for all $t \in [0,
T(\vartheta_1, \vartheta_0))$ since $k_0 = k_{\mu_0}$, see (b) in
(\ref{9h}), and
\[
\left(\frac{d}{dt} k_t \right)(\emptyset) = \left(
L^\Delta_{\vartheta_1 \vartheta''} k_t \right)(\emptyset) = 0,
\]
which follows from (\ref{J8a}) -- (\ref{J10}). The stated uniqueness
follows by the arguments used in the proof of Lemma 4.8 in
\cite{KK}.
\end{proof}
\begin{remark}
  \label{AArk}
As in the proof of Lemma \ref{A1lm} one can show that, for each
$t\in [0, T(\vartheta_1, \vartheta_0))$, the following holds:
$$Q_{\vartheta_1 \vartheta_0}(t): \mathcal{K}_{\vartheta_0} \to
\mathcal{D}_{\vartheta_1},$$ see (\ref{A1a}), and
\[
\qquad \frac{d}{dt} Q_{\vartheta_1 \vartheta_0}(t) =
L^\Delta_{\vartheta_1} Q_{\vartheta_1 \vartheta_0}(t).
\]
\end{remark}
Now we construct the evolution of functions $G_0 \mapsto G_t$ such
that, for $k\in \mathcal{K}_{\vartheta}$, the following holds, cf.
(\ref{J7a}),
\begin{equation}
  \label{A11a}
 \langle \!\langle G_0 ,Q_{\vartheta'\vartheta}(t) k \rangle \!
 \rangle = \langle \!\langle G_t , k \rangle \!
 \rangle, \qquad t< T(\vartheta', \vartheta).
\end{equation}
To this end, we introduce, cf (\ref{18}) and (\ref{17a}),
\begin{eqnarray}
  \label{A11b}
|G|_\vartheta & = & \int_{\Gamma_0} |G(\eta) | \exp\left( \vartheta
|\eta|\right) \lambda (d \eta), \\[.2cm] \mathcal{G}_\vartheta & = &
\{G:\Gamma_0\to \mathds{R}: |G|_\vartheta < \infty\}. \nonumber
\end{eqnarray}
Clearly, $\mathcal{G}_{\vartheta'} \hookrightarrow
\mathcal{G}_{\vartheta}$ for $\vartheta < \vartheta'$; hence, we
have introduced another scale of Banach spaces, cf (\ref{19}). As in
(\ref{A1}) and (\ref{A2a}), we define the corresponding
multiplication operators $A_{\vartheta\vartheta'}$ and
$S_{\vartheta\vartheta'}(t)$, and also $C_{\vartheta\vartheta'}\in
\mathcal{L}(\mathcal{G}_{\vartheta'} , \mathcal{G}_{\vartheta})$
which acts as
\begin{equation*}
 % \label{A11c}
 \left( CG\right)(\eta) = -\sum_{x\in \eta} \left(\sum_{y\in \eta \setminus
 x} a(x-y) \right) G(\eta \setminus x) + \int_{\mathds{R}^d} b(x) G(\eta \cup x) d
 x,
\end{equation*}
and thus satisfies, cf (\ref{A2}),
\begin{equation}
  \label{A11d}
 \|C_{\vartheta\vartheta'} \| \leq \frac{\|b\|e^{-\vartheta}+ \langle a \rangle e^{\vartheta'}}{e(\vartheta' -
 \vartheta)}.
\end{equation}
Now, for the same division of $[\vartheta, \vartheta']$ as in
(\ref{A5}), we introduce, cf (\ref{A6}),
\begin{gather*}
  \Omega^{l}_{\vartheta \vartheta'} (t, t_1, \dots , t_l) =
  S_{\vartheta \vartheta^1}(t_l) C_{\vartheta^1 \vartheta^2}
  S_{\vartheta^2\vartheta^3}(t_{l-1}- t_l) \cdots
  C_{\vartheta^{2s-1} \vartheta^{2s}}\\[.2cm]
  \times S_{\vartheta^{2s}\vartheta^{2s+1}}(t_{l-s}- t_{l-s+1})
  \cdots C_{\vartheta^{2l-1}\vartheta^{2l}}S_{\vartheta^{2l-1}\vartheta'}(t-
  t_{1}).
\end{gather*}
For this $\Omega^{l}_{\vartheta \vartheta'}$, one can get the
properties analogous to those stated in Proposition \ref{A1pn}.
Next, for $n\in \mathds{N}$, we define, cf (\ref{A10a}),
\begin{gather}
\label{AH} H^{(n)}_{\vartheta \vartheta'} (t) = S_{\vartheta
\vartheta'}(t) + \sum_{l=1}^n \int_0^t\int_0^{t_1} \cdots
\int_0^{t_{l-1}} \Omega^{l}_{\vartheta \vartheta'} (t, t_1, \dots ,
t_l) d t_l \cdots d t_1.
\end{gather}
As in the proof of Proposition \ref{A2pn}, by means of (\ref{A11d})
we then show that the sequence $\{H^{(n)}_{\vartheta \vartheta'}
(t)\}_{n\in \mathds{N}}$ converges in
$\mathcal{L}(\mathcal{G}_{\vartheta'} , \mathcal{G}_{\vartheta})$,
uniformly on compact subsets of $[0, T(\vartheta', \vartheta))$. Let
$H_{\vartheta \vartheta'} (t)$ be the limit. Then, by the very
construction in (\ref{AH}), it follows that, cf (\ref{A11a}),
\begin{equation}
  \label{AH1}
 \langle \! \langle H_{\vartheta \vartheta'} (t) G , k\rangle
 \!\rangle = \langle \! \langle  G , Q_{\vartheta' \vartheta} (t) k\rangle
 \!\rangle, \qquad t\in [0, T(\vartheta', \vartheta)),
\end{equation}
holding for each $G\in \mathcal{G}_{\vartheta'}$ and $k \in
\mathcal{K}_\vartheta$.

\subsection{The identification}
Our next step is based on the following statement.
\begin{lemma}
  \label{A2lm}
Let $\{Q_{\vartheta'\vartheta}(t): (\vartheta , \vartheta', t) \in
\varUpsilon\}$ be the family as in Proposition \ref{A2pn}. Then, for
each $\vartheta$ and $\vartheta'$ and $t\in [0, T(\vartheta',
\vartheta)/2)$, it follows that that $Q_{\vartheta'\vartheta} (t):
\mathcal{K}_{\vartheta}^\star\to \mathcal{K}_{\vartheta'}^\star$.
\end{lemma}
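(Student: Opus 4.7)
The plan is to approximate $Q_{\vartheta'\vartheta}(t)$ by a family of operators $\{Q^R_{\vartheta'\vartheta}(t)\}_{R>0}$ generated by cut-off dynamics that manifestly preserve $\mathcal{K}^\star$, and then appeal to closedness of the cone $\mathcal{K}^\star_{\vartheta'}$. For each $R>0$, I would replace $b$ by $b_R(x) = b(x) I_{B_R}(x)$ (with $B_R$ the ball of radius $R$) and construct $L_R$, $L^\Delta_R$ as in (\ref{L}), (\ref{J10}). Since $b_R$ has compact support and all other rates are locally bounded, $L_R$ generates a standard spatial birth-and-death Markov process on $\Gamma$, with a transition semigroup $P^R_t:\mathcal{P}(\Gamma)\to \mathcal{P}(\Gamma)$. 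Repeating the constructions of subsection 3.1 with $L^\Delta_R$ in place of $L^\Delta$ produces $Q^R_{\vartheta'\vartheta}(t) \in \mathcal{L}(\mathcal{K}_\vartheta,\mathcal{K}_{\vartheta'})$ on the same time interval $[0,T(\vartheta',\vartheta))$, because the bounds in (\ref{A2}) depend only on $\|b\|\geq \|b_R\|$, $\|m\|$, $\|a\|$, $\langle a\rangle$. For $k = k_{\mu_0}$ with $\mu_0\in \mathcal{P}_{\rm exp}(\Gamma)$, uniqueness in Lemma \ref{A1lm} applied to $L^\Delta_R$ identifies $Q^R_{\vartheta'\vartheta}(t) k$ with the correlation function of $\mu_0 P^R_t$, hence it lies in $\mathcal{K}^\star_{\vartheta'}$.

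Next, I would establish $Q^R_{\vartheta'\vartheta}(t) k \to Q_{\vartheta'\vartheta}(t) k$ in the norm of $\mathcal{K}_{\vartheta'}$ as $R\to \infty$, uniformly for $t$ in compact subsets of $[0,T(\vartheta',\vartheta)/2)$. In the perturbation series (\ref{A10a}), the only $R$-dependent ingredient is the $b$-part of each operator $B_{\vartheta^{2s}\vartheta^{2s-1}}$, and $B^R \to B$ in operator norm by dominated convergence. The whole series is $R$-uniformly majorised by the geometric bound $(l/eT_\delta)^l/l!$ from the proof of Proposition \ref{A2pn}. The factor $1/2$ arises because one must route $Q_{\vartheta'\vartheta}(t)$ through an intermediate space $\mathcal{K}_{\vartheta''}$ with $\vartheta''$ strictly between $\vartheta$ and $\vartheta'$ via the semigroup identity (\ref{A100}), so as to keep uniform-in-$R$ control of the tails at the price of restricting both sub-intervals to half the existence time.

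The conclusion then follows by closedness of $\mathcal{K}^\star_{\vartheta'}$ in $\mathcal{K}_{\vartheta'}$: every $G\in B^\star_{\rm bs}(\Gamma_0)$ belongs to $\mathcal{G}_{\vartheta'}$ (by bounded support), hence $\langle \! \langle G, \cdot \rangle \! \rangle$ is a continuous linear functional on $\mathcal{K}_{\vartheta'}$, so $\mathcal{K}^\star_{\vartheta'}$ is an intersection of norm-closed half-spaces. Since each $Q^R_{\vartheta'\vartheta}(t) k$ lies in this closed cone and the sequence converges in norm, so does the limit $Q_{\vartheta'\vartheta}(t)k$. The main obstacle is the honest identification at the cut-off level, namely verifying that $L_R$ generates a Markov semigroup $P^R_t$ on $\mathcal{P}(\Gamma)$ starting from arbitrary (possibly infinite-configuration) $\mu_0\in \mathcal{P}_{\rm exp}(\Gamma)$, and that the correlation function of $\mu_0 P^R_t$ indeed solves (\ref{J7}) with $L^\Delta_R$. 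For $b_R$ of compact support and $a\in L^1 \cap L^\infty$, this is within reach of standard constructions of spatial birth-and-death processes, but may require either an appeal to the literature or yet a further level of approximation (truncating $a$ and restricting $|\eta|$) in which positivity preservation is even more transparent.
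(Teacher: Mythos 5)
Your proposal has two genuine gaps. The first is the positivity input itself. You delegate it to the assertion that, after replacing $b$ by $b_R=b\,I_{B_R}$, the generator $L_R$ ``generates a standard spatial birth-and-death Markov process on $\Gamma$'' for arbitrary $\mu_0\in\mathcal{P}_{\rm exp}(\Gamma)$, and that the correlation functions of $\mu_0 P^R_t$ exist, stay sub-Poissonian, and solve (\ref{J7}) classically so that the uniqueness of Lemma \ref{A1lm} identifies them with $Q^R_{\vartheta'\vartheta}(t)k_{\mu_0}$. For infinite configurations with the competition term $\sum_{y\in\gamma\setminus x}a(x-y)$ this is not a standard fact; it is essentially the content of what is being proved, and you acknowledge it only as ``the main obstacle''. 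The paper avoids any process on $\Gamma$: it truncates the \emph{initial state} to local densities $R_0^{\Lambda,N}$ as in (\ref{A22}), makes the birth rate integrable (the damping $b_\sigma$ in (\ref{A13})), runs the dynamics of densities on $\Gamma_0$ via the Thieme--Voigt substochastic semigroup $S^\dagger$ of Proposition \ref{AP1pn} (this is where positivity actually comes from, see (\ref{A37})), and then needs the whole chain of identifications in Propositions \ref{AP2pn}--\ref{B2pn} (through the extra scale $\mathcal{U}_{\sigma,\vartheta}$ in (\ref{A39})) plus the limits (\ref{A50})--(\ref{A51}) to pull this back to $k^\sigma_t$. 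Without an argument of comparable substance at the cut-off level, your scheme assumes what it must prove.

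The second gap is the convergence step. You claim $B^R\to B$ in operator norm ``by dominated convergence'' and hence $Q^R_{\vartheta'\vartheta}(t)k\to Q_{\vartheta'\vartheta}(t)k$ in the $\mathcal{K}_{\vartheta'}$-norm. This is false in general: the relevant operator-norm difference is controlled from below by $\sup_{|x|>R}b(x)\,e^{-\vartheta'}$ (test on $k$ with $k(\emptyset)=1$ and $\eta=\{x_0\}$, $|x_0|>R$), and Assumption \ref{Ass1} only requires $b$ bounded and continuous, e.g.\ $b\equiv\mathrm{const}$, in which case $\sup_{|x|>R}b(x)$ does not vanish as $R\to\infty$. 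The same obstruction applies to the Gaussian cutoff. This is precisely why the paper proves only the weak convergence (\ref{A12}), pairing against $G\in B_{\rm bs}(\Gamma_0)$, transporting $G$ by the dual evolution $H_{\vartheta\vartheta'}(t)$ of (\ref{AH}), and using the $L^1$-bound (\ref{A20}) so that dominated convergence can act inside the $x$-integral in (\ref{A18}); weak convergence of the pairings is enough because membership in $\mathcal{K}^\star_{\vartheta'}$ is defined through exactly such pairings, cf.\ (\ref{19a}) and Proposition \ref{Gpn}. Your closing appeal to norm-closedness of the cone is therefore resting on a premise that fails; the repair is to replace norm convergence by convergence of $\langle\!\langle G,\cdot\rangle\!\rangle$ for $G\in B^\star_{\rm bs}(\Gamma_0)$, which is in effect the paper's Proposition \ref{AA1pn} and is also where the restriction $t<T(\vartheta',\vartheta)/2$ genuinely originates, via the Duhamel formula (\ref{A16}) and the choice (\ref{A15}), rather than from a ``uniform-in-$R$ control of tails''.
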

We prove this lemma in a number of steps. First we introduce
auxiliary models, indexed by $\sigma>0$, for which we construct the
families of operators $Q^\sigma_{\vartheta'\vartheta}(t)$ analogous
to those in Proposition \ref{A2pn}. Then we prove that these
families have the property stated in Lemma \ref{A2lm}. Thereafter,
we show that
\begin{equation}
  \label{A12}
\langle\!\langle G, Q^\sigma_{\vartheta_1\vartheta_0}(t) k_0
\rangle\!\rangle =: \langle\!\langle G, k^\sigma_t \rangle\!\rangle
\to \langle\!\langle G,
 k_t
 \rangle\!\rangle, \qquad {\rm as} \ \ \sigma \to 0,
\end{equation}
holding for each $G\in B^\star_{\rm bs}(\Gamma_0)$ and $k_t$ as in
Lemma \ref{A1lm} with $t\in [0,T(\vartheta',\vartheta)/2)$, see
(\ref{9g}). By Proposition \ref{Gpn} this yields the fact we wish to
prove.

\subsubsection{Auxiliary models}
For $\sigma>0$, we set
\begin{equation}
  \label{A13}
\varphi_\sigma (x) = \exp\left( - \sigma |x|^2\right), \qquad
b_\sigma (x) = b(x) \varphi_\sigma (x).
\end{equation}
Let also $L^{\Delta,\sigma}$ stand for $L^\Delta$ as in (\ref{J10})
with $b$ replaced by $b_\sigma$. Note that $\|b_\sigma\| \leq
\|b\|$. Clearly, for this $L^{\Delta,\sigma}$,  we can perform the
same construction as in the previous subsection and obtain the
family $\{Q^\sigma_{\vartheta'\vartheta}(t): (\vartheta ,
\vartheta', t) \in \varUpsilon\}$ as in Proposition \ref{A2pn} with
$\varUpsilon$ and $T(\vartheta', \vartheta)$ given in (\ref{A9}) and
(\ref{A8}), respectively. Note also that
$Q^\sigma_{\vartheta'\vartheta}(t)$ satisfy, cf. (\ref{A10}) and
Remark \ref{AArk},
\begin{equation}
 \label{A14A}
 \frac{d}{dt} Q^\sigma_{\vartheta'\vartheta}(t) = L^{\Delta, \sigma}_{\vartheta' \vartheta''} Q^\sigma_{\vartheta''\vartheta}(t)
 =L^{\Delta, \sigma}_{\vartheta'} Q^\sigma_{\vartheta'\vartheta}(t).
\end{equation}
Like in (\ref{A11}) we then set
\begin{equation}
  \label{A14}
 k^\sigma_t = Q^\sigma_{\vartheta_1\vartheta_0}(t) k_{\mu_0} , \qquad t
 < T(\vartheta_1 , \vartheta_0).
\end{equation}
Also as above, we construct the operators $H^\sigma_{\vartheta
\vartheta'}(t)$ such that, cf. (\ref{AH1}),
\begin{equation}
 \label{AH1a}
 \langle\! \langle H^\sigma_{\vartheta
\vartheta'}(t)G, k \rangle \! \rangle = \langle\! \langle G,
Q^\sigma_{\vartheta' \vartheta}(t)k  \rangle \! \rangle,
\end{equation}
holing for appropriate $G$ and $k$.
\begin{proposition}
  \label{AA1pn}
Assume that
$Q^\sigma_{\vartheta_1\vartheta_0}:\mathcal{K}^\star_{\vartheta_0}
\to \mathcal{K}^\star_{\vartheta_1}$ for all $t< T(\vartheta_1,
\vartheta_0)$. Then,  for all $t < T(\vartheta_1, \vartheta_0)/2$
and $G\in B_{\rm bs}(\Gamma_0)$, the convergence in (\ref{A12})
holds.
\end{proposition}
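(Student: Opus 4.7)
I would dualize the pairing using (\ref{AH1}) and (\ref{AH1a}), apply a Duhamel-type formula to the difference $Q - Q^\sigma$, and then pass to the limit via dominated convergence. The factor of $1/2$ in the time bound is expected to arise from splitting the time interval when invoking Duhamel.

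\emph{Reduction and Duhamel.} First, by (\ref{AH1a}) one has $\langle\!\langle G, k^\sigma_t\rangle\!\rangle = \langle\!\langle H^\sigma_{\vartheta_0\vartheta_1}(t)G, k_{\mu_0}\rangle\!\rangle$, and analogously for $k_t$ via (\ref{AH1}), so the task reduces to showing $\langle\!\langle G, [Q_{\vartheta_1\vartheta_0}(t) - Q^\sigma_{\vartheta_1\vartheta_0}(t)]k_{\mu_0}\rangle\!\rangle \to 0$. Since $L^\Delta - L^{\Delta,\sigma}$ is exactly the immigration term with $b$ replaced by $b(1-\varphi_\sigma)$, differentiating in $t$ and using (\ref{A14A}) together with the semigroup property (\ref{A100}) should yield the Duhamel representation
\begin{equation*}
\bigl(Q_{\vartheta_1\vartheta_0}(t) - Q^\sigma_{\vartheta_1\vartheta_0}(t)\bigr)k_{\mu_0} = \int_0^t Q_{\vartheta_1\vartheta_b}(t-s)\,(L^\Delta - L^{\Delta,\sigma})_{\vartheta_b\vartheta_a}\,Q^\sigma_{\vartheta_a\vartheta_0}(s)\,k_{\mu_0}\,ds,
\end{equation*}
for intermediate exponents $\vartheta_0 < \vartheta_a < \vartheta_b < \vartheta_1$. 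The integrand is well defined only when $s < T(\vartheta_a,\vartheta_0)$ and $t-s < T(\vartheta_1,\vartheta_b)$ hold simultaneously for every $s \in [0,t]$. Taking $\vartheta_a$ and $\vartheta_b$ close to the midpoint $(\vartheta_0+\vartheta_1)/2$, an elementary computation based on (\ref{A8}) shows that both $T(\vartheta_a,\vartheta_0)$ and $T(\vartheta_1,\vartheta_b)$ exceed $T(\vartheta_1,\vartheta_0)/2$, which is precisely the restriction in the statement.

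\emph{Convergence.} Pairing the Duhamel identity with $G$, dualizing $Q_{\vartheta_1\vartheta_b}(t-s)$ via (\ref{AH1}), and absorbing the sum in $L^\Delta - L^{\Delta,\sigma}$ through the standard Lebesgue-Poisson identity $\int_{\Gamma_0}\sum_{x\in\eta}F(x,\eta\setminus x)\,\lambda(d\eta) = \int_{\Gamma_0}\int_{\mathds{R}^d}F(x,\eta)\,dx\,\lambda(d\eta)$ would rewrite the pairing as
\begin{equation*}
\int_0^t\!\int_{\Gamma_0}\!\int_{\mathds{R}^d} \bigl[H_{\vartheta_b\vartheta_1}(t-s)G\bigr](\eta \cup x)\,b(x)\bigl(1 - \varphi_\sigma(x)\bigr)\,\bigl[Q^\sigma_{\vartheta_a\vartheta_0}(s)k_{\mu_0}\bigr](\eta)\,dx\,\lambda(d\eta)\,ds.
\end{equation*}
For every $(s,\eta,x)$ the factor $1 - \varphi_\sigma(x)$ tends to $0$ as $\sigma \to 0$ and is bounded by $1$. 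Because the construction of $Q^\sigma$ in Subsection~3.1 invokes $b_\sigma$ only through $\|b_\sigma\| \le \|b\|$, the operator norm of $Q^\sigma_{\vartheta_a\vartheta_0}(s)$ is bounded uniformly in $\sigma$; the same holds trivially for $H_{\vartheta_b\vartheta_1}(t-s)$. Combining these with $\|b\| < \infty$ and the elementary inequality $\sup_{n\ge 0} n\,e^{-(\vartheta_b - \vartheta_a)n} \le (e(\vartheta_b - \vartheta_a))^{-1}$ produces a $\sigma$-independent integrable majorant on $[0,t]\times\Gamma_0\times\mathds{R}^d$, and dominated convergence closes the argument.

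\emph{Main obstacle.} The main technical point is to justify the Duhamel identity rigorously within the inductive scale $\{\mathcal{K}_\vartheta\}$ and to track the uniform-in-$\sigma$ bounds through the chain of embeddings. This amounts to re-running the machinery of Subsection~3.1 with $b$ replaced by $b_\sigma$ and observing that every norm estimate in (\ref{A2})--(\ref{A11d}) depends on $b$ only via $\|b\|$, which is unaffected by the Gaussian cutoff $\varphi_\sigma$.
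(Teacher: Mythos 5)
Your proposal follows essentially the same route as the paper's proof: the Duhamel representation of $k_t-k^\sigma_t$ with the perturbation $(L^\Delta-L^{\Delta,\sigma})k(\eta)=\sum_{x\in\eta}b(x)(1-\varphi_\sigma(x))k(\eta\setminus x)$, intermediate weights near the midpoint $(\vartheta_0+\vartheta_1)/2$ to account for the restriction $t<T(\vartheta_1,\vartheta_0)/2$, dualization through $H_{\vartheta_b\vartheta_1}(t-s)G$ and the Lebesgue--Poisson (Minlos) identity, and finally dominated convergence with a $\sigma$-uniform majorant coming from $\|b_\sigma\|\le\|b\|$ and the bound $\sup_{n}n\,e^{-(\vartheta_b-\vartheta_a)n}\le (e(\vartheta_b-\vartheta_a))^{-1}$. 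This is exactly the paper's argument (its equations (\ref{A15})--(\ref{A20})), so the proposal is correct and not a genuinely different proof.
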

\begin{proof}
Take $\vartheta = (\vartheta_1 + \vartheta_0)/2$ and then pick
$\vartheta' \in (\vartheta, \vartheta_1)$ such that
\begin{equation}
  \label{A15}
  \frac{1}{2} T(\vartheta_1 , \vartheta_0)  \leq \min\{T(\vartheta_1, \vartheta'); T(\vartheta,
  \vartheta_0)\},
\end{equation}
which is possible in view of the continuous dependence of
$T(\vartheta', \vartheta)$ on both its arguments, see (\ref{A8}).
For $t< T(\vartheta_1 , \vartheta_0)/2$, by (\ref{A11}) and
(\ref{A14}) we get that
\begin{gather}
  \label{A16}
k_t - k^\sigma_t = \int_0^t Q_{\vartheta_1 \vartheta'}(t-s) \left(
L^{\Delta}_{\vartheta'\vartheta}-
L^{\Delta,\sigma}_{\vartheta'\vartheta} \right) k^\sigma_s d s =:
\int_0^t Q_{\vartheta_1 \vartheta'}(t-s) D_{\vartheta'\vartheta}
k^\sigma_s d s,
\end{gather}
where (\ref{J10}),
\begin{equation}
  \label{A17}
(D k)(\eta)= \sum_{x\in \eta} \left( 1 - \varphi_\sigma(x)\right)
b(x) k(\eta\setminus x),
\end{equation}
see (\ref{J10}) and $k^\sigma_s$ lies in $\mathcal{K}_\vartheta$,
which is possible since $$s \leq t < \frac{1}{2}T(\vartheta_1,
\vartheta_0) \leq T(\vartheta, \vartheta_0), $$ see (\ref{A15}).
Take $G\in B_{\rm bs}$. Since it lies in each
$\mathcal{G}_\vartheta$, and hence in $\mathcal{G}_{\vartheta_1}$,
we can get $$H_{\vartheta'\vartheta_1}(t-s) G =:G_{t-s} \in
\mathcal{G}_{\vartheta'}, \qquad t-s < T(\vartheta_1,
\vartheta_0)/2,$$ see (\ref{A15}).
 For this $G$, by (\ref{AH1}) and (\ref{A16}) we have
\begin{gather}
  \label{A18}
\psi_\sigma (t) := \langle \!\langle G, k_t - k^\sigma_t \rangle \!
\rangle = \int_0^t \langle\!\langle G_{t-s}, D_{\vartheta'
\vartheta}k^\sigma_s \rangle \! \rangle ds \\[.2cm]
= \int_0^t \left( \int_{\Gamma_0}\int_{\mathds{R}^d}
\frac{1}{|\eta|+1} G_{t-s}(\eta\cup x) b(x) (1 - \varphi_\sigma (x))
(|\eta|+1)k^\sigma_s (\eta) d x \lambda (d \eta) \right) ds.
\nonumber
\end{gather}
To get the latter line we also used (\ref{A17}). Recall that here
$G_{t-s}\in \mathcal{G}_{\vartheta'}$ and $k_s^\sigma \in
\mathcal{K}_\vartheta$ with $\vartheta < \vartheta'$. Let us prove
that
\begin{equation*}
 % \label{A19}
g_s (x) := \int_{\Gamma_0} \frac{1}{|\eta|+1}\left\vert G_s
(\eta\cup x)\right\vert \exp\left( \vartheta'|\eta|\right) \lambda
(d \eta)
\end{equation*}
lies in $L^1 (\mathds{R}^d)$ for each $s\leq \widetilde{T}$. Indeed,
by (\ref{8}) and (\ref{A11b}) we have
\begin{equation}
  \label{A20}
\|g_s \|_{L^1 (\mathds{R}^d)} \leq e^{-\vartheta'}\sup_{s\in
[0,\widetilde{T}]} |G_s|_{\vartheta'}.
\end{equation}
We use this in (\ref{A18}) to get
\begin{gather*}
|\psi_\sigma(t) | \leq \sup_{s\in [0,\widetilde{T}]}
\|k_s\|_\vartheta \frac{\| b \|e^{\vartheta' - \vartheta-1}
}{\vartheta'-\vartheta}\int_0^t \int_{\mathds{R}^d} g_s (x) \left(1
- \varphi_\sigma (x)\right) d x ds \to 0, \qquad {\rm as}  \ \
\sigma \to 0.
\end{gather*}
The latter convergence follows by (\ref{A20}) and the Lebesgue
dominated convergence theorem. This completes the proof.
\end{proof}

\subsubsection{Auxiliary evolutions}
Now we turn to proving that the assumption of Proposition
\ref{AA1pn} holds true. For a compact $\Lambda$, by $\Gamma_\Lambda$
we denote the set of configurations $\eta$ contained in $\Lambda$.
It is a measurable subset of $\Gamma_0$, i.e., $\Gamma_\Lambda\in
\mathcal{B}(\Gamma)$. Recall that $\mathcal{B}(\Gamma)$ can be
generated by the cylinder sets $\Gamma^{\Lambda,n}$ with all
possible compact $\Lambda$ and $n\in \mathds{N}_0$. Let
$\mathcal{B}(\Gamma_\Lambda)$ denote the sub-$\sigma$-field of
$\mathcal{B}(\Gamma)$ consisting of  $A\subset \Gamma_\Lambda$. For
$A\in\mathcal{B}(\Gamma_\Lambda)$, we set $C_\Lambda(A) = \{ \gamma
\in \Gamma: \gamma_\Lambda \in A\}$. Then, for a state $\mu$, we
define $\mu^\Lambda$ by setting $\mu^\Lambda(A) =
\mu(C_\Lambda(A))$; thereby, $\mu^\Lambda$ is a probability measure
on $\mathcal{B}(\Gamma_\Lambda)$.
 It is possible
to show, see \cite{Tobi},  that for each compact $\Lambda$ and $\mu
\in \mathcal{P}_{\rm exp} (\Gamma)$, the measure $\mu^\Lambda$ has
density with respect to the Lebesgue-Poisson measure defined in
(\ref{8}), which we denote by $R_\mu^\Lambda$. Moreover, the
correlation function $k_\mu$ and the density $R_\mu^\Lambda$ satisfy
\begin{equation}
  \label{A21}
k_\mu (\eta) = \int_{\Gamma_\Lambda} R^\Lambda_\mu (\eta \cup \xi)
\lambda (d \xi), \qquad \eta \in \Gamma_\Lambda.
\end{equation}
Let $\mu_0\in \mathcal{P}_{\rm exp}(\Gamma)$ be the initial state as
in Lemma \ref{A1lm}. Fix some compact $\Lambda$ and $N\in
\mathds{N}$, and then, for $\eta\in \Gamma_0$, set
\begin{equation}
  \label{A22}
R^{\Lambda,N}_0 (\eta) = \left\{ \begin{array}{ll}
R^{\Lambda}_{\mu_0} (\eta), \qquad  &{\rm if} \ \ \eta \subset
\Lambda \ \ {\rm and} \ \ |\eta|\leq N;\\[.3cm] 0, \qquad  &{\rm
otherwise.}
\end{array}\right.
\end{equation}
Clearly, $R^{\Lambda,N}_0\in \mathcal{G}_\vartheta$ with any
$\vartheta \in \mathds{R}$, and $R^{\Lambda,N}_0(\eta) \geq 0$ for
$\lambda$-almost all $\eta\in \Gamma_0$.

Let us now consider the auxiliary model specified by $L^{\Delta,
\sigma}$, and also by $L^\sigma$ which one obtains by replacing in
(\ref{L}) $b$ by $b_\sigma$, see (\ref{A13}). Then the equation for
the densities obtained from the Fokker-Planck equation (\ref{R1})
takes the form
\begin{eqnarray}
  \label{A25}
 \frac{d}{dt} R_t (\eta) & = &(L^\dagger R_t )(\eta)\\[.2cm] \nonumber & := & - \Psi_\sigma (\eta) R_t(\eta)
 + \sum_{x\in \eta} b_\sigma(x) R_t (\eta \setminus x) +
 \int_{\mathds{R}^d} \left( \sum_{y\in \eta} a(x-y)\right) R_t(\eta \cup x) dx,
\end{eqnarray}
where
\begin{equation}
  \label{A26}
 \Psi_\sigma (\eta) := E(\eta) + \langle b_\sigma \rangle,
 \qquad \langle b_\sigma \rangle := \int_{\mathds{R}^d} b(x)
 \varphi_\sigma(x) d x.
\end{equation}
Set
\begin{equation*}
  %\label{A27}
 \mathcal{G}^{+}_\vartheta = \{ G \in \mathcal{G}_\vartheta: G(\eta)
 \geq 0 , \ \ {\rm for} \ \lambda-{\rm a.a.} \ \eta\in \Gamma_0\},
\end{equation*}
and also
\begin{equation}
  \label{A28}
\mathcal{D}= \{ R\in \mathcal{G}_0 : \Psi_\sigma  R \in
\mathcal{G}_0 \}, \qquad \mathcal{D}^+= \mathcal{D} \bigcap
\mathcal{G}_0^+.
\end{equation}
\begin{proposition}
  \label{AP1pn}
The operator $(L^\dagger, \mathcal{D})$ defined in (\ref{A25}) and
(\ref{A28}) is the generator of a substochastic semigroup $S^\dagger
= \{S^\dagger (t)\}_{t \geq 0}$ on $\mathcal{G}_0$, which leaves
invariant each $\mathcal{G}_\vartheta$, $\vartheta > 0$.
\end{proposition}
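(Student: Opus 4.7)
The plan is to apply the classical Kato--Voigt perturbation theorem for substochastic semigroups on $L^1$-type spaces to the natural decomposition $L^\dagger = A + B$, where
\begin{equation*}
(AR)(\eta) = -\Psi_\sigma(\eta) R(\eta), \qquad (BR)(\eta) = \sum_{x \in \eta} b_\sigma(x) R(\eta \setminus x) + \int_{\mathds{R}^d}\!\Bigl(\sum_{y \in \eta} a(x-y)\Bigr) R(\eta \cup x)\, dx.
\end{equation*}
Since $\Psi_\sigma \geq 0$, the multiplication operator $(A, \mathcal{D})$ generates the positive contraction semigroup $R \mapsto e^{-t\Psi_\sigma(\cdot)} R$ on $\mathcal{G}_0$. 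The perturbation $B$ is manifestly positive and, by the boundedness of $a$, $b_\sigma$ and $m$, satisfies $BR \in \mathcal{G}_0$ whenever $\Psi_\sigma R \in \mathcal{G}_0$, so that $B:\mathcal{D} \to \mathcal{G}_0$.

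The central step is the Kato dissipativity identity. For $R \in \mathcal{D}^+$, two applications of the Mecke-type identity for the Lebesgue--Poisson measure,
\begin{equation*}
\int_{\Gamma_0} \sum_{x \in \eta} G(x, \eta \setminus x)\, \lambda(d\eta) = \int_{\mathds{R}^d} \int_{\Gamma_0} G(x, \eta)\, dx\, \lambda(d\eta),
\end{equation*}
turn the two summands of $BR$ into $\langle b_\sigma \rangle \int R\, \lambda$ and $\int\bigl(\sum_{x \neq y \in \eta} a(x-y)\bigr) R\, \lambda$, respectively. These exactly cancel the $\langle b_\sigma \rangle$ and competition pieces of $\Psi_\sigma$ in $A$, leaving
\begin{equation*}
\int_{\Gamma_0} (L^\dagger R)(\eta)\, \lambda(d\eta) = -\int_{\Gamma_0} \Bigl(\sum_{x \in \eta} m(x)\Bigr) R(\eta)\, \lambda(d\eta) \leq 0.
\end{equation*}
This is precisely the Kato--Voigt hypothesis on an AL-space, so there is a closed extension $\widetilde{L^\dagger} \supset L^\dagger|_\mathcal{D}$ generating a substochastic $C_0$-semigroup $S^\dagger$ on $\mathcal{G}_0$, whose mass defect coincides with the $m$-mortality flux on the right-hand side above.

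For the invariance of $\mathcal{G}_\vartheta$, $\vartheta > 0$, I would repeat the identical calculation with the weight $e^{\vartheta|\eta|}$ inserted; the two Mecke identities now produce additional factors of $e^{\vartheta}$ and $e^{-\vartheta}$ from the birth and competition terms, and one obtains
\begin{equation*}
\int_{\Gamma_0} (L^\dagger R)\, e^{\vartheta|\eta|}\, \lambda(d\eta) = \int_{\Gamma_0}\!\Bigl[-\!\sum_{x \in \eta}\! m(x) - (1 - e^{-\vartheta})\!\!\sum_{x \neq y \in \eta}\!\! a(x-y) + (e^\vartheta - 1)\langle b_\sigma \rangle\Bigr] R\, e^{\vartheta|\eta|}\, \lambda(d\eta).
\end{equation*}
For $\vartheta > 0$ the first two bracketed terms are nonpositive, so the right-hand side is dominated by $c_\vartheta |R|_\vartheta$ with $c_\vartheta := (e^\vartheta - 1)\langle b_\sigma \rangle$. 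Running the Kato--Voigt argument for $L^\dagger - c_\vartheta I$ on $\mathcal{G}_\vartheta$ yields a $C_0$-semigroup on $\mathcal{G}_\vartheta$ with growth bound $e^{c_\vartheta t}$; by uniqueness this must agree with the restriction of $S^\dagger$ to $\mathcal{G}_\vartheta$, which therefore is invariant.

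The main obstacle is the standard subtlety of Kato--Voigt constructions: the generator is a priori only the closure of $A + B$ on some \emph{characteristic} domain, which might strictly extend $\mathcal{D}$. Here this is essentially harmless because all further use of $S^\dagger$ in the paper proceeds through the duality identity \eqref{AH1a} and the action on densities of the form \eqref{A22}, which is determined by $L^\dagger$ on $\mathcal{D}$; moreover, the weighted estimate above shows that $B$ is of Miyadera perturbation type on each $\mathcal{G}_\vartheta$ with $\vartheta > 0$, so no genuine enlargement of the domain occurs in the positive cone where the argument is used.
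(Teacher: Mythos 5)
Your decomposition and your verification of the Kato-type mass inequality are the right first half: the Mecke-identity computation giving $\int_{\Gamma_0}(L^\dagger R)\,d\lambda=-\int_{\Gamma_0}\bigl(\sum_{x\in\eta}m(x)\bigr)R\,d\lambda\le 0$ for $R\in\mathcal{D}^+$ is exactly the first condition in (\ref{A29}) (the paper states it as equal to zero; your version is the accurate one, and $\le 0$ is all that is needed). The gap is in what you conclude from it. The proposition asserts that $(L^\dagger,\mathcal{D})$ \emph{itself} is the generator, and in the form the paper actually uses the statement, see (\ref{A28a}), the semigroup must map $\mathcal{G}_0^+$ into $\mathcal{D}^+$ as well as leave each $\mathcal{G}_\vartheta$ invariant. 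Plain Kato--Voigt only produces a substochastic semigroup generated by \emph{some} extension of $L^\dagger|_{\mathcal{D}}$, and you concede this, arguing the enlargement is harmless for the rest of the paper. It is not: in the proof of Proposition \ref{B1pn} one shows that $q_t$ defined in (\ref{A36}) solves (\ref{A33}) by differentiating $S^\dagger(t)R_0^{\Lambda,N}$ and letting $L^\dagger$ act through the explicit expression (\ref{A25}); this requires precisely the property $S^\dagger(t):\mathcal{G}_0^+\to\mathcal{D}^+$, which neither Kato--Voigt nor your weighted estimate supplies. Your closing claim that the weighted bound makes $B$ a Miyadera perturbation and hence prevents domain enlargement is asserted, not proved: the inequality $\int_{\Gamma_0}(L^\dagger R)e^{\vartheta|\eta|}\,d\lambda\le c_\vartheta|R|_\vartheta$ controls norm growth, not honesty of the generator.

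The paper closes exactly this gap by using the Thieme--Voigt theorem in the form of \cite[Propositions 3.1 and 3.2]{K}: besides the mass inequality one checks the \emph{pointwise} inequality $(L^\sigma G_\vartheta)(\eta)+\varepsilon\Psi_\sigma(\eta)\le C\,G_\vartheta(\eta)$ with $G_\vartheta(\eta)=e^{\vartheta|\eta|}$, i.e.\ boundedness from above of
\[
\varSigma(\eta)=-E(\eta)\bigl(1-e^{-\vartheta}\bigr)+\langle b_\sigma\rangle\bigl(e^{\vartheta}-1\bigr)+\varepsilon\Psi_\sigma(\eta)e^{-\vartheta|\eta|}.
\]
The extra term $\varepsilon\Psi_\sigma$ --- absent from your weighted computation, which is the integrated version of this inequality with $\varepsilon=0$ --- is exactly what upgrades ``some extension generates'' to ``$(L^\dagger,\mathcal{D})$ generates and $S^\dagger(t)$ maps $\mathcal{G}_0^+$ into $\mathcal{D}^+$'', and it delivers the invariance of every $\mathcal{G}_\vartheta$ simultaneously, so no separate consistency argument is needed. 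Your own consistency step (``by uniqueness this must agree with the restriction of $S^\dagger$'') is also shaky as stated: in a possibly dishonest situation the substochastic solution of the Cauchy problem need not be unique, so one would have to compare the two constructions through minimality rather than uniqueness. In short: right skeleton, but the Kato--Voigt theorem must be replaced by the Thieme--Voigt theorem with the $\varepsilon\Psi_\sigma$ condition to obtain the proposition as stated.
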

\begin{proof}
In this statement we mean that
\begin{eqnarray}
  \label{A28a}
\forall t\geq 0 \qquad &(a) & \ \ S^\dagger(t) : \mathcal{G}^+_0 \to
\mathcal{D}^+; \\[.2cm]  & (b) & \ \ |S^\dagger(t) R|_0 \leq 1, \ \ {\rm
whenever} \ \ |R|_0 \leq 1 \ \ {\rm and} \ \ R\in \mathcal{G}^+_0; \nonumber \\[.2cm]  & (c) & \ \ S^\dagger(t) : \mathcal{G}^+_\vartheta
\to \mathcal{G}^+_\vartheta, \ \ {\rm for} \ \ {\rm all} \ \
\vartheta>0. \nonumber
\end{eqnarray}
We use the Thieme-Voigt theorem in the form of \cite[Propositions
3.1 and 3.2]{K}. By this theorem the proof amounts to checking the
validity of the following inequalities:
\begin{gather}
  \label{A29}
\forall R\in \mathcal{D}^+ \qquad \int_{\Gamma_0} \left(L^\dagger R
\right)(\eta) \lambda ( d\eta) \leq 0,\\[.2cm] \nonumber \forall \vartheta >0 \qquad
(L^\sigma G_\vartheta)(\eta) + \varepsilon \Psi_\sigma (\eta) \leq C
G_\vartheta (\eta), \qquad G_\vartheta (\eta) :=
e^{\vartheta|\eta|},
\end{gather}
holding for some positive $C$ and $\varepsilon$. Recall that
$\Psi_\sigma$ is defined in (\ref{A26}). By direct inspection  we
get from (\ref{A25}) that the left-hand side of the first line in
(\ref{A29}) equals zero for each $R\in \mathcal{D}$. Proving the
second inequality in (\ref{A29}) reduces to showing that, for each
$\vartheta>0$, the function
\[
\varSigma (\eta) := - E (\eta) (1 - e^{-\vartheta} ) +\langle
b_\sigma \rangle (e^\vartheta-1) + \varepsilon \Psi_\sigma (\eta)
e^{-\vartheta|\eta|}, \qquad \eta \in \Gamma_0,
\]
is bounded from above, which is obviously the case.
\end{proof}
The second auxiliary evolution is supposed to be constructed in
$\mathcal{G}_\vartheta$. It is generated by the operator
$\widehat{L}_\vartheta$ the action of which coincides with that of
$L^{\Delta, \sigma}$, see (\ref{J10}) and (\ref{J8a}), with $b$
replaced by $b_\sigma$. The domain of this operator is
\begin{equation}
  \label{A30}
\widehat{\mathcal{D}}_\vartheta = \{ q \in \mathcal{G}_\vartheta:
\Psi_\sigma (\cdot ) q \in \mathcal{G}_\vartheta\}.
\end{equation}
\begin{proposition}
  \label{AP2pn}
For each $\vartheta >0$, the operator
$(\widehat{L}_\vartheta,\widehat{\mathcal{D}}_\vartheta)$ is the
generator of a $C_0$-semigroup $\widehat{S}_\vartheta:=
\{\widehat{S}_\vartheta(t)\}_{t\geq 0}$ of bounded operators on
$\mathcal{G}_\vartheta$.
\end{proposition}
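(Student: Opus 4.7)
The plan is to realise $\widehat L_\vartheta$ as a Miyadera--Voigt perturbation of an explicit multiplication semigroup on $\mathcal G_\vartheta$. Decompose $\widehat L_\vartheta = A_\vartheta + B_\vartheta$, where $(A_\vartheta q)(\eta) = -E(\eta)q(\eta)$ with domain $D(A_\vartheta) = \widehat{\mathcal D}_\vartheta$ (the two coincide since $\Psi_\sigma - E = \langle b_\sigma\rangle$ is a constant), and $B_\vartheta = B^-_\vartheta + B^+_\vartheta$ collects the two off-diagonal pieces of $L^{\Delta,\sigma}$,
\[
(B^-_\vartheta q)(\eta) = -\int_{\mathds{R}^d}\sum_{y\in\eta} a(x-y)\, q(\eta\cup x)\,dx, \qquad
(B^+_\vartheta q)(\eta) = \sum_{x\in\eta} b_\sigma(x)\, q(\eta\setminus x).
\]

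First I would check that $A_\vartheta$ generates the contractive $C_0$-semigroup $(T(t)q)(\eta) = \exp(-tE(\eta))q(\eta)$ on $\mathcal G_\vartheta$: the multiplier lies pointwise in $[0,1]$ so $T(t)$ is a contraction on the weighted $L^1$-space $\mathcal G_\vartheta$, and strong continuity at $t=0$ is immediate by dominated convergence in $|\cdot|_\vartheta$.

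The heart of the argument is a Miyadera estimate on $\int_0^\tau |B_\vartheta T(s) q|_\vartheta\,ds$ for $q\in D(A_\vartheta)$. Using the Minlos/Mecke identity for the Lebesgue--Poisson measure,
\[
\int_{\Gamma_0}\int_{\mathds{R}^d} F(\eta, x)\,dx\,\lambda(d\eta) \;=\; \int_{\Gamma_0}\sum_{x\in\xi} F(\xi\setminus x, x)\,\lambda(d\xi),
\]
a direct computation (the change of variables $\xi = \eta\cup x$ and $\xi = \eta\setminus x$, respectively) yields
\[
\int_0^\tau |B^-_\vartheta T(s) q|_\vartheta\, ds \;\leq\; e^{-\vartheta}\int_{\Gamma_0}\frac{E^a(\xi)}{E(\xi)}\bigl(1 - e^{-\tau E(\xi)}\bigr) |q(\xi)|\, e^{\vartheta|\xi|}\lambda(d\xi) \;\leq\; e^{-\vartheta}|q|_\vartheta,
\]
where $E^a(\xi) := \sum_{x\in\xi}\sum_{y\in\xi\setminus x} a(x-y) \leq E(\xi)$, together with
\[
\int_0^\tau |B^+_\vartheta T(s) q|_\vartheta\, ds \;\leq\; \tau\, e^{\vartheta}\langle b_\sigma\rangle\, |q|_\vartheta.
\]
Adding these bounds gives $\int_0^\tau |B_\vartheta T(s)q|_\vartheta\, ds \leq \bigl(e^{-\vartheta} + \tau e^{\vartheta}\langle b_\sigma\rangle\bigr)|q|_\vartheta$. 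Because $\vartheta > 0$ makes $e^{-\vartheta} < 1$, I fix $\tau>0$ small enough that the bracketed constant is strictly below $1$. The Miyadera--Voigt perturbation theorem then yields that $(A_\vartheta + B_\vartheta, \widehat{\mathcal D}_\vartheta) = (\widehat L_\vartheta, \widehat{\mathcal D}_\vartheta)$ generates a $C_0$-semigroup $\widehat S_\vartheta$ on $\mathcal G_\vartheta$, whose operators are bounded on compact time intervals by standard consequences of the construction.

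The main obstacle is the Minlos calculation producing the critical $e^{-\vartheta}$ factor from $B^-_\vartheta T(s)$: this factor is precisely what forces the relative Miyadera bound to be strictly less than one, and it requires the hypothesis $\vartheta > 0$. The same computation on $\mathcal G_0$ would only give relative bound equal to $1$ and would therefore fail, which is consistent with the fact that Proposition \ref{AP1pn} had to treat a different (substochastic, hence Thieme--Voigt-amenable) operator $L^\dagger$ on $\mathcal G_0$ rather than $L^{\Delta,\sigma}$ itself.
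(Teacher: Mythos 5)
Your argument is correct, but it follows a different route from the paper. The paper first conjugates by the sign flip $w(\eta)=(-1)^{|\eta|}q(\eta)$, so that $\widehat{L}_\vartheta$ becomes $\widetilde{A}+\widetilde{B}+\widetilde{C}$ with a negative diagonal part $\widetilde{A}$, a \emph{positive} off-diagonal part $\widetilde{B}$ and a bounded part $\widetilde{C}$; it then applies the Thieme--Voigt theorem (the same tool as in Proposition \ref{AP1pn}) to conclude that $\widetilde{A}+\widetilde{B}$ generates a substochastic semigroup on $\mathcal{G}_\vartheta$, and finishes by a bounded perturbation with $\widetilde{C}$, for which $|\widetilde{C}w|_\vartheta\leq e^{\vartheta}\langle b_\sigma\rangle|w|_\vartheta$. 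You instead keep the signed operator and invoke the Miyadera--Voigt perturbation theorem directly, with the multiplication semigroup $e^{-tE}$ as the unperturbed part and the small-time integral bound $e^{-\vartheta}+\tau e^{\vartheta}\langle b_\sigma\rangle<1$. The analytic core is identical in both proofs: your Mecke-identity computation giving the factor $e^{-\vartheta}$ is exactly the paper's estimate $|\widetilde{B}w|_\vartheta\leq e^{-\vartheta}|\widetilde{A}w|_\vartheta$ (your integrated bound is its consequence, and it also supplies the relative boundedness $|B^{-}_\vartheta q|_\vartheta\leq e^{-\vartheta}|Eq|_\vartheta$ that the Miyadera theorem formally requires of $B_\vartheta$ on the graph-norm space --- worth stating explicitly). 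What the two approaches buy: yours is more self-contained and needs no positivity structure at all, relying only on a standard perturbation theorem and the observation that $D(A_\vartheta)=\widehat{\mathcal{D}}_\vartheta$ because $\Psi_\sigma-E=\langle b_\sigma\rangle$ is constant; the paper's sign-flip trick lets it recycle the Thieme--Voigt machinery already set up for $L^\dagger$ on $\mathcal{G}_0$ and yields the explicit quasi-contractive growth bound $e^{t e^{\vartheta}\langle b_\sigma\rangle}$ for $\widehat{S}_\vartheta(t)$. Since Proposition \ref{AP2pn} is only used to provide existence and uniqueness for the problem (\ref{A33}), your version is fully adequate for the role it plays; your closing remark about why the same estimate cannot give a Miyadera bound below one on $\mathcal{G}_0$ is also a fair explanation of why the paper handles $L^\dagger$ there by a different (substochastic) argument.
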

\begin{proof}
As in the proof of Lemma 5.5 in \cite{KK}, we pass from $q$ to $w$
by setting $w(\eta) = (-1)^{|\eta|} q(\eta)$, and hence to
$\widetilde{L}_\vartheta$ defined on the same domain (\ref{A30}) by
the relation $(\widetilde{L}_\vartheta w)(\eta) =
(-1)^{|\eta|}(\widehat{L}_\vartheta  q)(\eta)$. Then we just prove
that $(\widetilde{L}_\vartheta, \widehat{\mathcal{D}}_\vartheta)$
generates a $C_0$-semigroup on $\mathcal{G}_\vartheta$. In view of
(\ref{J8a}) -- (\ref{J10}), we have
\begin{eqnarray*}
 % \label{A31}
\widetilde{L}_\vartheta & = & \widetilde{A}  + \widetilde{B}  +
\widetilde{C} \\[.2cm] \nonumber
(\widetilde{A} w)(\eta) & = & - E (\eta) w(\eta), \quad
(\widetilde{B} w)(\eta) = \int_{\mathds{R}^d} \left(\sum_{y\in \eta}
a(x-y) \right)
w(\eta\cup x) d x ,\\[.2cm] \nonumber
(\widetilde{C} w)(\eta) & = & - \sum_{x\in \eta} b_\sigma (x)
w(\eta\setminus x).
\end{eqnarray*}
By (\ref{A11b}) we get
\[
|\widetilde{C} w|_\vartheta \leq e^\vartheta \langle b_\sigma
\rangle  |w|_\vartheta,
\]
hence $\widetilde{C}$ is a bounded operator. For $w\in
\mathcal{G}_\vartheta^{+}$, we have
\begin{gather*}
  |\widetilde{B} w|_\vartheta = \int_{\Gamma_0}e^{\vartheta|\eta|}\left( \int_{\mathds{R}^d} \left( \sum_{y\in \eta} a (x-y) \right)
w(\eta\cup x) d x \right) \lambda ( d\eta) \\[.2cm] =
\int_{\Gamma_0}e^{\vartheta(|\eta|-1)} \left(\sum_{x\in
\eta}\sum_{y\in \eta\setminus x} a(x-y) \right) w(\eta) \lambda
(d\eta) \\[.2cm]
\leq e^{-\vartheta} \int_{\Gamma_0}e^{\vartheta|\eta|} E (\eta)
w(\eta) \lambda ( d\eta) = e^{-\vartheta} |\widetilde{A}
w|_\vartheta < |\widetilde{A} w|_\vartheta.
\end{gather*}
The latter estimate allows us to apply here the Thieme-Voigt
theorem, see \cite[Proposition 3.1]{K} by which $\widetilde{A}
+\widetilde{B}$ generates a substochastic semigroup in
$\mathcal{G}_\vartheta$. Thus, $\widetilde{L}_\vartheta$ generates a
$C_0$-semigroup since $\widetilde{C}$ is bounded. This completes the
proof.
\end{proof}
Now for $R_0^{\Lambda, N}$ defined in (\ref{A22}),
 we set
\begin{equation}
  \label{A23}
q^{\Lambda,N}_0 (\eta) = \int_{\Gamma_0} R^{\Lambda,N}_0 (\eta\cup
\xi) \lambda (d \xi), \qquad \eta \in \Gamma_0.
\end{equation}
By (\ref{A21})
\begin{equation}
  \label{A24}
0 \leq q^{\Lambda,N}_0 (\eta) \leq k_{\mu_0} (\eta).
\end{equation}
Hence, $q^{\Lambda,N}_0\in \mathcal{K}_{\vartheta_0}$. By
(\ref{A22}) $R_0^{\Lambda,N}$ lies in each $\mathcal{G}_\vartheta$,
$\vartheta \geq 0$. At the same time,
\begin{eqnarray*}
%  \label{A32}
|q_0^{\Lambda,N}|_{\vartheta} & = & \int_{\Gamma_0}\int_{\Gamma_0}
e^{\vartheta|\eta|} R_0^{\Lambda,N}(\eta\cup \xi) \lambda (d\eta)
\lambda (d \xi) \\[.2cm] \nonumber & = & \int_{\Gamma_0}\left(
\sum_{\eta \subset \xi} e^{\vartheta |\eta|}\right)
R_0^{\Lambda,N}(\xi) \lambda (d\xi) = |R_0^{\Lambda,N}|_\beta,
\end{eqnarray*}
where $\beta>0$ is to satisfy $e^\beta = 1 + e^\vartheta$. Hence,
$q_0^{\Lambda,N}\in \mathcal{G}_\vartheta$ for each $\vartheta>0$.
In view of this, $q_0^{\Lambda,N}\in
\widehat{\mathcal{D}}_\vartheta$ for each $\vartheta>0$, see
(\ref{A30}). Consider the problem in $\mathcal{G}_\vartheta$
\begin{equation}
  \label{A33}
\frac{d}{dt} q_t = \widehat{L}_\vartheta q_t, \qquad q_t|_{t=0} =
q_0^{\Lambda,N}.
\end{equation}
\begin{proposition}
  \label{B1pn}
For each $\vartheta>0$, the problem in (\ref{A33}) has a unique
global solution $q_t \in \widehat{\mathcal{D}}_\vartheta$ such that,
for each $G\in B_{\rm bs}^\star(\Gamma_0)$, the following holds
\begin{equation}
  \label{A34}
\langle\! \langle G, q_t \rangle \!\rangle \geq 0.
\end{equation}
\end{proposition}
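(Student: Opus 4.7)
My plan is to combine the two auxiliary evolutions already built: Proposition \ref{AP2pn} will deliver existence, uniqueness and the regularity $q_t\in\widehat{\mathcal D}_\vartheta$, while Proposition \ref{AP1pn} will be used at the density level and then lifted, via the transform in (\ref{A23}), to obtain the sign property (\ref{A34}).

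First I would verify that $q_0^{\Lambda,N}\in\widehat{\mathcal D}_\vartheta$ for every $\vartheta>0$. Since $R_0^{\Lambda,N}$ has bounded support, cf.\ (\ref{A22}), it lies in every $\mathcal G_{\vartheta'}$; the computation displayed just after (\ref{A24}) then gives $q_0^{\Lambda,N}\in\mathcal G_\vartheta$, and the same computation with the extra factor $\Psi_\sigma(\eta)=E(\eta)+\langle b_\sigma\rangle$, cf.\ (\ref{A26}), absorbed into a slightly enlarged exponent yields $\Psi_\sigma\,q_0^{\Lambda,N}\in\mathcal G_\vartheta$. Proposition \ref{AP2pn} then provides the global solution $q_t:=\widehat S_\vartheta(t)q_0^{\Lambda,N}$, unique in $\widehat{\mathcal D}_\vartheta$.

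For the sign property I would run the density evolution $R_t:=S^\dagger(t)R_0^{\Lambda,N}$ in parallel. By parts (a) and (c) of (\ref{A28a}), together with $R_0^{\Lambda,N}\in\mathcal G_{\vartheta'}^+$ for every $\vartheta'\geq 0$, one has $R_t\geq 0$ $\lambda$-a.e.\ and $R_t\in\mathcal G_{\vartheta'}$ for every $t\geq 0$ and every $\vartheta'>0$. Set
\[
\tilde q_t(\eta):=\int_{\Gamma_0}R_t(\eta\cup\xi)\,\lambda(d\xi),\qquad \eta\in\Gamma_0,
\]
in analogy with (\ref{A23}). Repeating the argument after (\ref{A24}) with a suitable choice of exponents gives $\tilde q_t\in\widehat{\mathcal D}_\vartheta$. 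The crux is then to verify, by differentiating under the integral and invoking the Mecke-type identity
\[
\int_{\Gamma_0}\int_{\Gamma_0}F(\eta,\xi)\,\lambda(d\xi)\,\lambda(d\eta)=\int_{\Gamma_0}\sum_{\eta\subset\gamma}F(\eta,\gamma\setminus\eta)\,\lambda(d\gamma),
\]
that the transform $R\mapsto\int R(\,\cdot\,\cup\xi)\lambda(d\xi)$ intertwines $L^\dagger$ (with $b$ replaced by $b_\sigma$) and $\widehat L_\vartheta$, i.e., $\widehat L_\vartheta\tilde q_t(\eta)=\int_{\Gamma_0}(L^\dagger R_t)(\eta\cup\xi)\lambda(d\xi)$. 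This intertwining, which is the classical duality between densities and correlation functions, is the main technical obstacle: each of the three terms of $L^\dagger$ must be reorganized separately, and the quadratic growth of $E$ in $|\eta|$ must be absorbed using the regularity of $R_t$ in every $\mathcal G_{\vartheta'}$.

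Granted the intertwining, $\tilde q_t$ solves the Cauchy problem (\ref{A33}), so the uniqueness part of Proposition \ref{AP2pn} gives $q_t=\tilde q_t$. For $G\in B^\star_{\rm bs}(\Gamma_0)$, the same Mecke-type identity then yields
\[
\langle\!\langle G,q_t\rangle\!\rangle=\int_{\Gamma_0}(KG)(\gamma)\,R_t(\gamma)\,\lambda(d\gamma)\geq 0,
\]
since $KG\geq 0$ on $\Gamma$ by (\ref{9g}) and $R_t\geq 0$ $\lambda$-almost everywhere, which is precisely (\ref{A34}).
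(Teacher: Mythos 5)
Your proposal is correct and follows essentially the same route as the paper: existence and uniqueness from Proposition \ref{AP2pn}, the representation $q_t(\eta)=\int_{\Gamma_0}\bigl(S^\dagger(t)R_0^{\Lambda,N}\bigr)(\eta\cup\xi)\,\lambda(d\xi)$ verified through the generator duality (which the paper likewise only checks "by direct inspection", citing the proof of Lemma 5.8 in \cite{KK}), identification by uniqueness, and positivity from $KG\geq 0$ together with the substochasticity of $S^\dagger$ as in (\ref{A28a}) and (\ref{A37}).
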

\begin{proof}
By Proposition \ref{AP2pn} the problem in (\ref{A33}) has a unique
global solution given by
\begin{equation}
  \label{A35}
q_t = \widehat{S}_\vartheta (t) q^{\Lambda,N}_0.
\end{equation}
On the other hand, this solution can be sought in the form
\begin{equation}
  \label{A36}
q_t(\eta) = \int_{\Gamma_0} \left(S^\dagger(t) R_0^{\Lambda,N}
\right)(\eta \cup \xi) \lambda (d \xi),
\end{equation}
where $S^\dagger$ is the semigroup constructed in Proposition
\ref{AP1pn}. Indeed, by direct inspection one verifies that $q_t$ in
this form satisfies (\ref{A33}), cf the proof of Lemma 5.8 in
\cite{KK}. Then, cf (\ref{9a}),
\begin{gather}
\label{A37}
 \langle\! \langle G, q_t \rangle \!\rangle =
\int_{\Gamma_0} (KG)(\eta) \left(S^\dagger(t) R_0^{\Lambda,N}
\right)(\eta) \lambda(d\eta) \geq 0,
\end{gather}
which yields (\ref{A34}). The inequality in (\ref{A37}) follows by
the fact that the semigroup $S^\dagger$ is substochastic, see
(\ref{A28a}). This completes the proof.
\end{proof}
By (\ref{A24}) it follows that $q^{\Lambda,N}_0 \in
\mathcal{K}_{\vartheta_0}$, hence we may use it in (\ref{A14}) and
obtain
\begin{equation}
 \label{A38}
k_t^{\Lambda,N} = Q^\sigma_{\vartheta_1
\vartheta_0}(t)q^{\Lambda,N}_0, \qquad t \in [0,T(\vartheta_1,
\vartheta_0)).
\end{equation}
\begin{proposition}
 \label{B2pn}
Let $k_t^{\Lambda,N}$ and $q_t$ be as in (\ref{A38}) and in
(\ref{A35}), (\ref{A36}), respectively. Then, for all $t \in
[0,T(\vartheta_1, \vartheta_0))$, it follows that
$k_t^{\Lambda,N}=q_t$.
\end{proposition}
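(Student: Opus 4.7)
The plan is to identify $q_t$ and $k_t^{\Lambda,N}$ by showing both are solutions of the same Cauchy problem in the common Banach space $\mathcal{K}_{\vartheta_1}$, and then to invoke the uniqueness from Lemma \ref{A1lm}. The starting observation is that $\widehat{L}_\vartheta$ and $L^{\Delta,\sigma}$ have the same pointwise action; they differ only in the Banach spaces where they are defined. Moreover, the common initial datum $q_0^{\Lambda,N}$ lies both in $\mathcal{K}_{\vartheta_0}$ (by (\ref{A24})) and in every $\mathcal{G}_\vartheta$, $\vartheta > 0$, since $R_0^{\Lambda,N}$ has finite-cardinality compact support. By Proposition \ref{A2pn} one already has $k_t^{\Lambda,N} = Q^\sigma_{\vartheta_1\vartheta_0}(t) q_0^{\Lambda,N} \in \mathcal{K}_{\vartheta_1}$ solving the $\sigma$-analogue of (\ref{A1c}) there.

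The central step is to verify that the $\mathcal{G}_\vartheta$-solution $q_t$, given explicitly by (\ref{A36}), actually belongs to $\mathcal{K}_{\vartheta_1}$ for $t \in [0, T(\vartheta_1, \vartheta_0))$. For this I would exploit the positivity and substochasticity of $S^\dagger$ from Proposition \ref{AP1pn} together with a Dyson--Duhamel iteration of $R_t := S^\dagger(t) R_0^{\Lambda,N}$ against the multiplication semigroup generated by $-\Psi_\sigma$: since the creation rates coming from the non-multiplicative part of $L^\dagger$ are at most linear in $|\gamma|$ (thanks to Assumption \ref{Ass1} on $a$ and $b$), this iteration yields pointwise bounds on $R_t(\gamma)$ of exponential type in $|\gamma|$, locally bounded in $t$. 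Inserting such bounds into (\ref{A36}) gives $|q_t(\eta)| \leq C_t \, e^{\vartheta_1 |\eta|}$ for $t$ in the stated range, i.e., $q_t \in \mathcal{K}_{\vartheta_1}$. With this in hand, (\ref{A33}) becomes $\tfrac{d}{dt} q_t = L^{\Delta,\sigma}_{\vartheta_1} q_t$ in $\mathcal{K}_{\vartheta_1}$, so $q_t$ and $k_t^{\Lambda,N}$ both solve the $\sigma$-version of (\ref{J7}) in $\mathcal{K}_{\vartheta_1}$ with the same initial value $q_0^{\Lambda,N}$, and the uniqueness of Lemma \ref{A1lm} (whose proof transfers verbatim to the $\sigma$-model) yields $q_t = k_t^{\Lambda,N}$.

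The main obstacle is precisely the pointwise $\mathcal{K}_{\vartheta_1}$-bound for $q_t$: because $\mathcal{G}_\vartheta$ is an $L^1$-type weighted space while $\mathcal{K}_{\vartheta_1}$ is an $L^\infty$-type one, the estimates provided by Proposition \ref{AP2pn} do not transfer directly, and the required pointwise control has to be extracted from the explicit representation (\ref{A36}) and from the specific structure of $L^\dagger$ governed by Assumption \ref{Ass1}. An alternative route, should the direct bound prove cumbersome, is to establish the identity via the dual pairing: using (\ref{AH1a}) on one side and the Minlos/combinatorial identity $\langle\!\langle G, q_t \rangle\!\rangle = \int_\Gamma (KG)(\gamma) R_t(\gamma) \lambda(d\gamma)$ on the other, one reduces equality of $q_t$ and $k_t^{\Lambda,N}$ to the intertwining $K H^\sigma_{\vartheta_0\vartheta_1}(t)G$ versus the semigroup dual to $S^\dagger$ acting on $KG$, both of which solve the $L^\sigma$-Kolmogorov equation from the same initial condition $KG$.
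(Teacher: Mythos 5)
Your strategy --- show that $q_t$ itself is a classical solution in $\mathcal{K}_{\vartheta_1}$ and then invoke the uniqueness of Lemma \ref{A1lm} --- correctly locates the difficulty (the two evolutions live in an $L^\infty$-type and an $L^1$-type scale), but the step you propose to bridge it does not work as sketched. First, a Dyson--Duhamel bound of the form $R_t(\gamma)\le C_t\,e^{\vartheta|\gamma|}$ is not enough to control (\ref{A36}): there $q_t(\eta)=\int_{\Gamma_0}R_t(\eta\cup\xi)\,\lambda(d\xi)$, and the Lebesgue--Poisson integral runs over configurations $\xi$ spread over the whole of $\mathds{R}^d$, so an estimate that is exponential in $|\xi|$ but carries no \emph{spatial} decay gives a divergent integral (infinite volume), let alone a bound $C_t e^{\vartheta_1|\eta|}$. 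One needs a majorant of the type $C^{|\gamma|}\prod_{x\in\gamma}f(x)$ with integrable $f$; this is precisely what the Gaussian cut-off $\varphi_\sigma$ in $b_\sigma=b\varphi_\sigma$ is designed to supply, and your sketch nowhere uses it. Second, even granted $q_t\in\mathcal{K}_{\vartheta_1}$ pointwise, Lemma \ref{A1lm} requires a solution that is continuously differentiable \emph{in the norm of} $\mathcal{K}_{\vartheta_1}$ with values in the domain; what Proposition \ref{B1pn} provides is differentiability in the $L^1$-type norm of $\mathcal{G}_\vartheta$, and pointwise bounds do not upgrade the one into the other, since the two topologies are not comparable. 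Your alternative ``dual pairing'' route has the same problem in disguise: it reduces the claim to a uniqueness statement for the Kolmogorov (backward) equation for functions on $\Gamma$, which is nowhere available in this framework.

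The paper closes exactly this gap by a different device: it introduces a third scale $\mathcal{U}_{\sigma,\vartheta}$ with the sup-type norm (\ref{A39}) weighted by $e^{\vartheta|\eta|}e(\eta;\varphi_\sigma)$, which embeds continuously both into $\mathcal{K}_\vartheta$ (see (\ref{A40})) and into every $\mathcal{G}_{\vartheta'}$ (thanks to the integrability of $\varphi_\sigma$). Since $q_0^{\Lambda,N}$ lies in this smaller space, the whole $Q$-construction of Propositions \ref{A1pn}, \ref{A2pn} and Lemma \ref{A1lm} is repeated there, producing an evolution $u_t$ that is differentiable in a topology stronger than both of the original ones; the inclusions of generators then identify $u_t$ with $k_t^{\Lambda,N}$ on the $\mathcal{K}$-side (via (\ref{A14A})) and with $q_t$ on the $\mathcal{G}$-side (via the uniqueness in Proposition \ref{B1pn}). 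If you want to salvage your one-sided argument, you would in effect have to prove bounds equivalent to membership of $q_t$ in $\mathcal{U}_{\sigma,\vartheta}$ together with norm-differentiability there --- i.e.\ reproduce the paper's intermediate construction.
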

\begin{proof}
A priori $k_t^{\Lambda,N}$ and $q_t$ lie in different spaces:
$\mathcal{K}_{\vartheta_1}$ and $\mathcal{G}_\vartheta$,
respectively. Note that the latter $\vartheta$ can be arbitrary
positive. The idea is to construct one more evolution
$q^{\Lambda,N}_0\mapsto u_t$ in some intersection of these two
spaces, related to the evolutions in (\ref{A38}) and (\ref{A35}).
Then the proof will follow by the uniqueness as in Proposition
\ref{B1pn}.

For $\vartheta \in \mathds{R}$, $\varphi_\sigma$ as in (\ref{A13})
and $u:\Gamma_0 \to \mathds{R}$, we set, cf (\ref{17a}) and
(\ref{1fa}),
\begin{equation}
 \label{A39}
 \|u\|_{\sigma, \vartheta} = \esssup_{\eta \in \Gamma_0} \frac{|u(\eta)|\exp\left(- \vartheta |\eta|\right)}{e(\eta;\varphi_\sigma)}
 , \qquad  e(\eta;\varphi_\sigma):=\prod_{x\in \eta} \varphi_\sigma(x),
\end{equation}
and then $\mathcal{U}_{\sigma, \vartheta}:= \{ u:\Gamma_0 \to
\mathds{R}: \|u\|_{\sigma, \vartheta} < \infty\}$. Clearly,
\begin{equation}
 \label{A40}
 \mathcal{U}_{\sigma, \vartheta} \hookrightarrow \mathcal{K}_{\vartheta}, \qquad \vartheta \in \mathds{R},
\end{equation}
since $\|u\|_\vartheta \leq \|u\|_{\sigma, \vartheta}$. Moreover, as
in (\ref{19}) we have that $ \mathcal{U}_{\sigma, \vartheta}
\hookrightarrow \mathcal{U}_{\sigma, \vartheta'}$ for $\vartheta' >
\vartheta$. Let $L^{\Delta, \sigma}$ be defined as in (\ref{J10})
with $b$ replaced by $b_\sigma$. Then we define an unbounded linear
operator $L^{\Delta, \sigma}_{\vartheta,u}: \mathcal{D}^{\Delta,
\sigma}_{\vartheta,u} \to \mathcal{U}_{\sigma, \vartheta}$ with the
action as just described and the domain
\begin{equation}
 \label{A41}
 \mathcal{D}^{\Delta, \sigma}_{\vartheta,u}=\{ u\in \mathcal{U}_{\sigma, \vartheta} : L^{\Delta, \sigma} u \in \mathcal{U}_{\sigma, \vartheta}\}.
\end{equation}
Clearly, $\mathcal{U}_{\sigma, \vartheta''} \subset
\mathcal{D}^{\Delta, \sigma}_{\vartheta,u}$ for each $\vartheta'' <
\vartheta$. By (\ref{A22}) and (\ref{A23}) it follows that
$q^{\Lambda,N}_0 (\eta) = 0$ if $|\eta|>N$ or if $\eta$ is not
contained in $\Lambda$. Then $q^{\Lambda,N}_0$ lies in each
$\mathcal{U}_{\sigma, \vartheta''}$, $\vartheta''\in \mathds{R}$,
and hence in the domain of $L^{\Delta, \sigma}_{\vartheta,u}$ given
in (\ref{A41}). Thus, we can consider
\begin{equation}
 \label{A42}
 \frac{d}{dt} u_t = L^{\Delta, \sigma}_{\vartheta,u} u_t, \qquad u_t|_{t=0} = q^{\Lambda,N}_0.
\end{equation}
As in (\ref{A1}) we write $L^{\Delta, \sigma}_{\vartheta,u} =
A^{\sigma,u}+B^{\sigma,u}$, where $A^{\sigma,u}$ is the
multiplication operator by $-E(\eta)$. The operator norm of
$B^{\sigma,u}$ can be estimated as follows. By (\ref{A39}) we have
\[
 |u(\eta)| \leq \|u\|_{\sigma, \vartheta} \exp\left( \vartheta|\eta|\right)\prod_{x\in \eta}\varphi_\sigma (x),
\]
which yields
\begin{gather*}
 \left\vert \left(B^{\sigma,u}u\right)(\eta) \right\vert \leq
 \|u\|_{\sigma, \vartheta} |\eta| \exp\left( \vartheta|\eta|\right)
\left(\|b\|e^{-\vartheta} + \langle a \rangle e^\vartheta \right)
\prod_{x\in \eta}\varphi_\sigma (x).
 \end{gather*}
Hence, the operator norm of  $B^{\sigma,u}_{\vartheta'\vartheta}\in
\mathcal{L}( \mathcal{U}_{\sigma, \vartheta}, \mathcal{U}_{\sigma,
\vartheta'})$ satisfies
\begin{equation*}
% \label{A43}
 \|B^{\sigma,u}_{\vartheta',\vartheta}\| \leq \frac{\|b\|e^{-\vartheta} + \langle a \rangle e^{\vartheta'}}{e(\vartheta' - \vartheta)},
\end{equation*}
which coincides with that in (\ref{A2}). Then we repeat the
construction made in Propositions \ref{A1pn}, \ref{A2pn} and Lemma
\ref{A1lm} and obtain the solution of (\ref{A42}) in the form
\begin{equation*}
% \label{A44}
u_t = Q^{\sigma,u}_{\vartheta_1 \vartheta_0}(t)  q^{\Lambda,N}_0,
\qquad t\in [0, T(\vartheta_1, \vartheta_0)),
\end{equation*}
where $T(\vartheta_1, \vartheta_0)$ is as in (\ref{A8}) whereas
$Q^{\sigma,u}_{\vartheta_1 \vartheta_0}(t)$ satisfies, cf
(\ref{A10}) and Remark \ref{AArk},
\[
 \frac{d}{dt}  Q^{\sigma,u}_{\vartheta_1 \vartheta_0}(t) =
 \left(A^{\sigma,u}_{\vartheta_1 \vartheta'} +B^{\sigma,u}_{\vartheta_1 \vartheta'} \right) Q^{\sigma,u}_{\vartheta' \vartheta_0}(t)
= L^{\Delta, \sigma}_{\vartheta_1,u}  Q^{\sigma,u}_{\vartheta_1
\vartheta_0}(t).
 \]
Since $(L^{\Delta, \sigma}_{\vartheta_1,u}, \mathcal{D}^{\Delta,
\sigma}_{\vartheta_1,u}) \subset (L^{\Delta, \sigma}_{\vartheta_1},
\mathcal{D}^{\Delta, \sigma}_{\vartheta_1})$, and in view of
(\ref{A14A}) and (\ref{A38}), (\ref{A40}), we have that
\begin{equation}
  \label{A47}
\forall t\in [0, T(\vartheta_1, \vartheta_0)) \qquad k^\sigma_t =
u_t.
\end{equation}
On the other hand, for $\vartheta>0$ and $u\in \mathcal{U}_{\sigma,
\vartheta'}$, by (\ref{A39}) we get
\begin{gather*}
  %\label{A48}
\int_{\Gamma_0}|u(\eta)| e^{\vartheta|\eta|} \lambda (d \eta) \leq
\|u\|_{\sigma, \vartheta'} \int_{\Gamma_0} \exp\left( (\vartheta' +
\vartheta)|\eta|\right) e(\eta; \varphi_\sigma) \lambda ( d \eta)
\\[ .2cm] \nonumber = \|u\|_{\sigma, \vartheta'} \exp\left( \langle \varphi_\sigma \rangle e^{\vartheta +
\vartheta'}\right), \qquad \langle \varphi_\sigma \rangle :=
\int_{\mathds{R}^d} \varphi_\sigma (x) d x.
\end{gather*}
Thus, $\mathcal{U}_{\sigma, \vartheta'}\hookrightarrow \mathcal{G}_{
\vartheta}$ for each $\vartheta'\in \mathds{R}$ and $\vartheta\geq
0$. Likewise, one shows that $\mathcal{D}^{\Delta,
\sigma}_{\vartheta',u}\hookrightarrow
\widehat{\mathcal{D}}_\vartheta$, see (\ref{A30}). Since the action
of $\widehat{L}$ coincides with that of $L^{\Delta,\sigma}$, by the
latter embedding we have that $(L^{\Delta, \sigma}_{\vartheta_1,u},
\mathcal{D}^{\Delta, \sigma}_{\vartheta_1,u}) \subset
(\widehat{L}_{\vartheta}, \widehat{\mathcal{D}}_{\vartheta})$,
holding for each $\vartheta>0$. Then by the uniqueness stated in
Proposition \ref{B1pn} we conclude that $q_t = u_t$ for all $t \in
[0,T(\vartheta_1, \vartheta_0))$. In view of (\ref{A47}), this
yields $k_t^{\Lambda,N} = u_t$, which completes the proof.
\end{proof}
\subsubsection{Proof of Lemma \ref{A2lm}} We have to show that the
assumption of Proposition \ref{AA1pn} holds true for each $\sigma
>0$, which is equivalent to proving that $k^\sigma_t$ given in
(\ref{A14}) has the property
\begin{equation}
  \label{A49}
\langle \! \langle G_0, k_t^\sigma \rangle \! \rangle \geq 0,
\end{equation}
holding for all $t < T(\vartheta_1, \vartheta_0)$ and $G_0\in
B^{\star}_{\rm bs}(\Gamma_0)$. By definition, a \emph{cofinal}
sequence of $\{\Lambda_n\}_{n\in \mathds{N}}$ is a sequence of
compact subsets $\Lambda_n \subset \mathds{R}^d$ such that
$\Lambda_n \subset \Lambda_{n+1}$, $n\in \mathds{N}$, and each $x
\in \mathds{R}^d$ is contained in a certain $\Lambda_n$. Let
$\{\Lambda_n\}_{n\in \mathds{N}}$ be such a sequence. Fix $\sigma>0$
and then, for given $\Lambda_n$ and $N\in \mathds{N}$, obtain
$q_0^{\Lambda_n,N}$ from $k_{\mu_0}\in \mathcal{K}_{\vartheta_0}$ by
(\ref{A22}), (\ref{A23}). As in \cite[Appendix]{Berns} one can show
that, for each $G\in \mathcal{G}_{\vartheta_0}$, the following holds
\begin{equation}
  \label{A50}
\lim_{n\to + \infty} \lim_{N\to + \infty} \langle \! \langle G,
q_0^{\Lambda_n,N} \rangle \! \rangle =  \langle \! \langle G,
k_{\mu_0} \rangle \! \rangle.
\end{equation}
Let $G_0$ be  as in (\ref{A49}) and hence lie in any
$\mathcal{G}_\vartheta$.  For $t\in [0,T(\vartheta_1, \vartheta_0))$
and $k_t^{\Lambda_n,N}$ as in (\ref{A38}), by (\ref{AH1a}) we get
\begin{gather}
  \label{A51}
\langle \! \langle G_0, k_t^{\Lambda_n,N} \rangle \! \rangle =
\langle \! \langle H^\sigma_{\vartheta_0\vartheta_1}(t) G_0,
q_0^{\Lambda_n,N} \rangle \! \rangle =: \langle \! \langle G,
q_0^{\Lambda_n,N} \rangle \! \rangle \geq 0.
\end{gather}
The latter inequality follows by Proposition \ref{B2pn} and
(\ref{A37}). Then (\ref{A49}) follows by (\ref{A50}) and
(\ref{A51}).

\subsection{Proof of Theorem \ref{1tm}}

To complete proving the theorem we have to construct the
continuation of the solution (\ref{A11}) to all $t\geq 0$ and prove
the upper bound in (\ref{T2}). The lower bound follows by the fact
that $k_t \in \mathcal{K}^\star$. This will be done by comparing
$k_t$ with the solution of the equation (\ref{J7}) for the Surgailis
model given in (\ref{T5}). If we denote the latter by $v_t$, then
\begin{gather}
  \label{A53}
v_t (\eta) = (W(t) k_{\mu_0})(\eta) := \sum_{\xi\subset \eta}e(\xi;
\phi_t) e(\eta \setminus \xi; \psi_t) k_{\mu_0}(\eta\setminus \xi).
\end{gather}
For $k_{\mu_0}\in\mathcal{K}_{\vartheta_0}$, by (\ref{17a}) and
(\ref{J6a}) we get from the latter
\begin{equation}
  \label{A54}
v_t (\eta) \leq \|k_{\mu_0}\|_{\vartheta_0} \exp\left\{\vartheta_0 +
\log\left(1 + t \|b\| e^{-\vartheta_0} \right) \right\},
\end{equation}
which holds also in the case $m\equiv 0$. Thus, for  a given $T>0$,
$W(t)$ with $t\in [0,T]$ acts as a bounded operator $W_{\vartheta_T
\vartheta_0}(t)$ from $\mathcal{K}_{\vartheta_0}$ to
$\mathcal{K}_{\vartheta_T}$ with
\begin{equation}
 \label{A54a}
 \vartheta_T := \vartheta_0 + \log \left(1 + T \|b\| e^{-\vartheta_0} \right).
\end{equation}
For $\vartheta\in \mathds{R}$, we set, cf (\ref{A8}),
\begin{equation}
  \label{A55}
  \tau(\vartheta) = T(\vartheta+1 , \vartheta) = \left[\|b \|e^{-\vartheta} + e \langle a \rangle e^\vartheta \right]^{-1}.
\end{equation}
For $\vartheta_1 = \vartheta_0 + 1$, let $k_t$ be given in
(\ref{A11}) with $t \in [0, \tau(\vartheta_0))$. Fix some $\kappa\in
(0,1/2)$ and set $T_1 = \kappa \tau(\vartheta_0)$. By Lemmas
\ref{A1lm} and \ref{A2lm} we know that $k_t = Q_{\vartheta_1
\vartheta_0} (t) k_{\mu_0}$ exists and lies in
$\mathcal{K}_{\vartheta_1}^\star$ for all $t\in [0,T_1]$. Take
$\vartheta \in (\vartheta_0 , \vartheta_0+1)$ such that $T_1 <
T(\vartheta, \vartheta_0)$. Then take $\vartheta'>\vartheta$ and
set, cf (\ref{A54a}),
\begin{equation*}
 %\label{A55a}
 \tilde{\vartheta}_1 = \max\left\{\vartheta_0 + 1; \vartheta' + \log \left( 1 + T_1 \|b\|e^{-\vartheta'}\right)\right\}.
\end{equation*}
For $t\in [0,T_1]$, we have
\begin{gather}
 \label{A56}
 v_t - k_t = \int_0^t W_{\tilde{\vartheta}_1 \vartheta'}(t-s) D_{\vartheta' \vartheta} k_s ds ,
\end{gather}
where $k_s$ belongs to $\mathcal{K}_\vartheta$, whereas $v_t$ and
$k_t$ belong to $\mathcal{K}_{\tilde{\vartheta}_1}$. By (\ref{A53})
and (\ref{A1}) the action of $D$ in (\ref{A56}) is
\[
 (D k)(\eta) = \left(\sum_{x\in \eta}\sum_{y\in \eta \setminus x} a(x-y)\right) k(\eta)
 + \int_{\mathds{R}^d} \left(\sum_{y\in \eta \setminus x} a(x-y)\right) k(\eta\cup x) d x,
\]
hence, $v_t (\eta) - k_t (\eta) \geq 0$ for $\lambda$-almost all
$\eta\in \Gamma_0$ since $W(t)$ is positive, see (\ref{A53}) and
(\ref{T3}), and $k_s \in \mathcal{K}_\vartheta^\star \subset
\mathcal{K}_\vartheta^+$, see (\ref{19a}), (\ref{19b}), and Lemma
\ref{A2lm}. Since  $k_t$ in (\ref{A56}) is in $\mathcal{K}^\star$,
we have that
\begin{equation}
  \label{T10}
 0 \leq k_t \leq v_t, \qquad t \in [0, T_1],
\end{equation}
which by (\ref{A54}) yields $k_t \in \mathcal{K}_{\vartheta_{T_1}}$
and the bound in (\ref{T2}) for such $t$, see (\ref{A54a}). Set $T_2
= \kappa \tau(\vartheta_{T_1})$, $\vartheta_2 = \vartheta_{T_1} +1$
and consider $k^{(2)}_t = Q_{\vartheta_2\vartheta_{T_1}}(t) k_{T_1}$
with $t\in [0,T_2]$. Clearly, $k^{(2)}_t = k_{T_1 + t}$ for $T_1 +
t< T(\vartheta_0+1, \vartheta)$, see (\ref{A100}), and hence is a
continuation of $k_t$ to $[T_1, T_2]$. Now we repeat this procedure
due times and obtain
\begin{equation*}
  %\label{A57}
  k^{(n)}_t = Q_{\vartheta_n \vartheta_{T_{n-1}}} (t) k_{T_{n-1}},
\end{equation*}
where $\vartheta_n = \vartheta_{T_{n-1}} +1$ and
\begin{gather}
  \label{A58}
T_n = \kappa \tau(\vartheta_{T_{n-1}})  \\[.2cm]
 \vartheta_{T_n} = \vartheta_{T_{n-1}} + \log \left( 1 +
T_{n-1}\|b\|
 e^{-\vartheta_{T_n-1}}\right), \quad \vartheta_{T_0} := \vartheta_0. \nonumber
\end{gather}
The continuation to all $t>0$ will be obtained if we show that
$\sum_{n\geq 1} T_n = +\infty$. Assume that this is not the case.
From the second line in (\ref{A58}) we get $T_{n-1} =
(e^{\vartheta_{T_n}} - e^{\vartheta_{T_{n-1}}})/\|b\|$. Hence
\[
\sum_{n=1}^N T_n = \left(e^{\vartheta_{T_N}} -
e^{\vartheta_0}\right)/ \|b\|.
\]
Thus, the mentioned series converges if the sequence
$\{\vartheta_{T_n}\}_{n\in \mathds{N}}$ is bounded, say by
$\bar{\vartheta}$. However, in this case one cannot get $T_n \to 0$
as $n\to +\infty$, for it contradicts the first line in (\ref{A58})
since
$$\tau(\vartheta)\geq \left[ e\langle a \rangle e^{\bar{\vartheta}}
+ \|b\|e^{-\vartheta_0} \right]^{-1}, $$ see (\ref{A55}). Clearly,
the upper bound in (\ref{T10}) hold on each $[T_{n-1}, T_{n}]$. This
completes the proof.

\section{The Global Boundedness}
Here we prove Theorem \ref{2tm}. Let $\varDelta$ be a closed cubic
cell such that
\begin{equation}
 \label{J22}
\inf_{x\in \varDelta}a (x) =: a_\varDelta >0,
\end{equation}
which is possible since $a$ is continuous.  For $\eta$ contained in
a translate of $\varDelta$, $|\eta|\geq 2$, and $x\in \eta$, we then
have
\begin{equation}
 \label{J23}
\sum_{y \in \eta \setminus x} a (x-y) \geq a_\varDelta
\left(|\eta|-1\right)\geq a_\varDelta .
\end{equation}
For a translate of $\varDelta$, we consider the observables
$N_\varDelta^n: \Gamma \to \mathds{N}_0$ defined as follows:
 $N^n_\varDelta(\gamma)
=|\gamma_\varDelta|^n$,  $n\in \mathds{N}$. Then
\begin{eqnarray}
  \label{J27}
N_\varDelta(\gamma) & = & \sum_{x\in \gamma} I_\varDelta(x), \\[.2cm]
N^n_\varDelta(\gamma) & = & \sum_{l=1}^n l! S(n,l) \sum_{\{x_1 ,
\dots , x_l\} \subset \gamma} I_\varDelta(x_1) \cdots
I_\varDelta(x_l), \quad n\geq 2, \nonumber
\end{eqnarray}
where $I_\varDelta$ is the indicator function of $\varDelta$ and
$S(n,l)$ is a Stirling numbers of the second kind, equal to the
number of distinct ways of dividing $n$ labeled items into $l$
unlabeled groups. It has the following representation, cf
\cite{Riordan},
\begin{equation}
\label{Tou1} S(n,l) = \frac{1}{l!}\sum_{s=0}^l \left(-1\right)^{l-s}
\binom{l}{s} s^n.
\end{equation}
Then, for $\mu\in \mathcal{P}_{\rm exp}(\Gamma)$, by
(\ref{1fa}) we have that
\begin{equation}
  \label{J28}
  \mu(N_\varDelta^n) = \sum_{l=1}^n S(n,l) \int_\varDelta \cdots \int_\varDelta k^{(l)}_\mu(x_1 ,
  \dots , x_l) d x_1 \cdots d x_l.
\end{equation}
For $l\in \mathds{N}$, we set
\begin{eqnarray}
  \label{J27a}
F_\varDelta^{(l)} (\gamma ) & = & \sum_{\{x_1 , \dots , x_l\}
\subset
\gamma} I_\varDelta(x_1) \cdots I_\varDelta(x_l)\\[.2cm] \nonumber
& = & \frac{1}{l!} N_\varDelta(\gamma) \left( N_\varDelta(\gamma) -
1\right)\cdots \left( N_\varDelta(\gamma) - l+1\right).
\end{eqnarray}
And also $F_\varDelta^{(0)} (\gamma )\equiv 1$. Then we can rewrite
(\ref{J27}) as follows
\begin{eqnarray}
  \label{J29}
N^n_\varDelta(\gamma)  =  \sum_{l=1}^n l! S(n,l) F_\varDelta^{(l)}
(\gamma ).
\end{eqnarray}
An easy calculation yields
\begin{equation}
  \label{J30}
F_\varDelta^{(l)} (\gamma \cup x) - F_\varDelta^{(l)} (\gamma ) =
I_\varDelta (x) F_\varDelta^{(l-1)} (\gamma ),
\end{equation}
For $\mu_t$ as in Theorem \ref{1tm}, we set
\begin{equation}
  \label{J31}
 q_\varDelta^{(0)} (t) \equiv 1, \qquad q_\varDelta^{(l)} (t) = \mu_t (F_\varDelta^{(l)}), \qquad l\in \mathds{N}.
\end{equation}
By (\ref{J28}), (\ref{J27a}) and (\ref{J29}) it follows that
\begin{equation}
  \label{J31c}
q_\varDelta^{(l)} (0) = \frac{1}{l!} \int_\varDelta \cdots
\int_\varDelta k^{(l)}_{\mu_0} (x_1 , \dots , x_l) d x_1 \cdots d
x_l.
\end{equation}
Since $\mu_0$ is in $\mathcal{P}_{\rm exp} (\Gamma)$, one finds
$\vartheta \in \mathds{R}$ such that $k^{(l)}_{\mu_0} (x_ 1, \dots,
x_l) \leq e^{\vartheta}$, cf (\ref{I4}). By (\ref{J31c}) this yields
\begin{equation}
  \label{J31d}
q_\varDelta^{(l)} (0) \leq \left[ {\rm V}(\varDelta)
e^\vartheta\right]^l/l!, \qquad l\in \mathds{N}.
\end{equation}
Recall that $a_\varDelta$ is defined in (\ref{J22}), see also
(\ref{J23}).  Set
\begin{gather}
 \label{J31e}
 b_\varDelta = \int_{\varDelta} b(x) dx, \qquad
 \kappa_\varDelta= \max\left\{ {\rm V}(\varDelta) e^\vartheta; {b_\varDelta}/{
a_\varDelta}\right\},
\end{gather}
where $\vartheta$ is as in (\ref{J31d}).

The proof of the lemma below is based on the following elementary
arguments. Let $u:[0,+\infty) \to \mathbb{R}$ be continuously
differentiable with the derivative satisfying
\begin{equation}
  \label{J24}
  u'(t) \leq b - a u(t), \qquad a, b >0.
\end{equation}
Then
\[
u(t) \leq u(0) e^{-at} + \frac{b}{a} \left( 1 - e^{-at}\right),
\qquad t\geq 0,
\]
which, in particular, means that
\begin{eqnarray}
  \label{J25}
u(t) \leq \max\{ u(0); b/a \},
\end{eqnarray}
and also: for each $\varepsilon >0$, there exists
$\tau_\varepsilon\geq 0$ such that
\begin{equation}
  \label{J26}
  \forall t\geq \tau_\varepsilon \qquad u(t) \leq \varepsilon + b/a.
\end{equation}
\begin{lemma}
  \label{J3lm}
Let $\varDelta$ be as in (\ref{J22}) and $\mu_t$, $t\geq 0$ be as in
Theorem \ref{1tm}, and hence $q_\varDelta^{(l)} (0)$ satisfies
(\ref{J31d}) with some $\vartheta$. Let $\kappa_\varDelta$ be as in
(\ref{J31e}) for these parameters. Then
\begin{equation}
  \label{J31a}
\forall t \geq 0 \qquad   q_\varDelta^{(l)} (t) \leq
\kappa^l_\varDelta/l!, \qquad l\in \mathds{N}.
\end{equation}
\end{lemma}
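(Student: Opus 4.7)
The plan is to derive a differential inequality for $q_\varDelta^{(l)}(t)$ closed between levels $l$ and $l-1$, and then combine it with the elementary comparison principle (\ref{J25}) in an induction on $l$. The starting point is the identity $\frac{d}{dt} q_\varDelta^{(l)}(t) = \mu_t(L F_\varDelta^{(l)})$, which is legitimate by Theorem \ref{1tm} and the duality (\ref{1g}) because $F_\varDelta^{(l)}$ is of the form $KG$ for $G(\eta) = \mathbf{1}(\eta\subset\varDelta,\,|\eta|=l)$.

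Applying $L$ from (\ref{L}) to $F_\varDelta^{(l)}$ via (\ref{J30}) and the analogous death-side identity $F_\varDelta^{(l)}(\gamma)-F_\varDelta^{(l)}(\gamma\setminus x)=I_\varDelta(x) F_\varDelta^{(l-1)}(\gamma\setminus x)$ gives
\[
(LF_\varDelta^{(l)})(\gamma) = -\sum_{x\in\gamma_\varDelta}\Big(m(x)+\sum_{y\in\gamma\setminus x} a(x-y)\Big) F_\varDelta^{(l-1)}(\gamma\setminus x) + b_\varDelta F_\varDelta^{(l-1)}(\gamma).
\]
Dropping $m\ge 0$, restricting the inner sum to $y\in\gamma_\varDelta\setminus x$, and invoking (\ref{J23}) yields
\[
(LF_\varDelta^{(l)})(\gamma) \leq -a_\varDelta(|\gamma_\varDelta|-1)\sum_{x\in\gamma_\varDelta} F_\varDelta^{(l-1)}(\gamma\setminus x) + b_\varDelta F_\varDelta^{(l-1)}(\gamma).
\]
The combinatorial identity $\sum_{x\in\gamma_\varDelta} F_\varDelta^{(l-1)}(\gamma\setminus x)= l\, F_\varDelta^{(l)}(\gamma)$ (immediate from $F_\varDelta^{(l)}=\binom{|\gamma_\varDelta|}{l}$) combined with $(|\gamma_\varDelta|-1)F_\varDelta^{(l)}(\gamma)\ge (l-1)F_\varDelta^{(l)}(\gamma)$ (valid because $F_\varDelta^{(l)}$ vanishes whenever $|\gamma_\varDelta|<l$) then collapses the inequality to a two-level form, and taking $\mu_t$-expectations produces
\[
\frac{d}{dt} q_\varDelta^{(l)}(t) \leq -a_\varDelta\, l(l-1)\, q_\varDelta^{(l)}(t) + b_\varDelta\, q_\varDelta^{(l-1)}(t).
\]

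Given this inequality, I would prove (\ref{J31a}) by induction on $l$. The case $l=0$ is trivial since $q_\varDelta^{(0)}\equiv 1 = \kappa_\varDelta^0/0!$. Assuming (\ref{J31a}) at level $l-1$ with $l\ge 2$, the inductive hypothesis feeds a constant upper bound $b_\varDelta\kappa_\varDelta^{l-1}/(l-1)!$ for the source term, so (\ref{J25}) applies with $a=a_\varDelta l(l-1)$ and yields
\[
q_\varDelta^{(l)}(t)\le \max\Big\{q_\varDelta^{(l)}(0),\ \frac{b_\varDelta}{a_\varDelta\, l(l-1)}\cdot\frac{\kappa_\varDelta^{l-1}}{(l-1)!}\Big\} \leq \max\Big\{\frac{[\mathrm{V}(\varDelta)e^{\vartheta}]^l}{l!},\ \frac{\kappa_\varDelta^l}{l!\,(l-1)}\Big\}\leq \frac{\kappa_\varDelta^l}{l!},
\]
where the first bound uses (\ref{J31d}) and the definition (\ref{J31e}) of $\kappa_\varDelta$ (both $\mathrm{V}(\varDelta)e^\vartheta\le\kappa_\varDelta$ and $b_\varDelta/a_\varDelta\le\kappa_\varDelta$ are used).

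The main obstacle is the base case $l=1$, where the damping factor $l(l-1)$ degenerates and the above inequality alone only gives the useless estimate $(d/dt)q_\varDelta^{(1)}\le b_\varDelta$. For this step I would not pass through $F_\varDelta^{(l-1)}$ but instead return to the sharp pointwise bound $(LN_\varDelta)(\gamma)\le -a_\varDelta |\gamma_\varDelta|(|\gamma_\varDelta|-1)+b_\varDelta$ and extract damping in $q_\varDelta^{(1)}$ itself, either via the elementary estimate $n(n-1)\ge n-1$ (producing a linear inequality to which (\ref{J25}) applies directly) or via Jensen's inequality $\mu_t[n(n-1)]\ge q_\varDelta^{(1)}(q_\varDelta^{(1)}-1)$ (producing a Riccati-type inequality handled by a quadratic analog of (\ref{J25})). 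It is at this step that the precise choice of $\kappa_\varDelta$ in (\ref{J31e}) as a maximum of $\mathrm{V}(\varDelta)e^\vartheta$ and $b_\varDelta/a_\varDelta$ is dictated, and one must check that the resulting steady-state root is dominated by $\kappa_\varDelta$.
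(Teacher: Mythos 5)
Your hierarchy of differential inequalities is derived correctly and, for $l\ge 2$, it is essentially the paper's own argument: the identity $\sum_{x\in\gamma_\varDelta}F^{(l-1)}_\varDelta(\gamma\setminus x)=l\,F^{(l)}_\varDelta(\gamma)$, a lower bound on the competition sum via (\ref{J23}), and induction through the comparison principle (\ref{J25}) are exactly the ingredients of (\ref{J33})--(\ref{J35}); your damping constant $l(l-1)a_\varDelta$ is even stronger than the paper's $l\,a_\varDelta$ at those levels. The genuine gap is the base case $l=1$, which you flag but do not close, and neither of your proposed patches yields the constant $\kappa_\varDelta$ of (\ref{J31e}). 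The bound $n(n-1)\ge n-1$ gives $\frac{d}{dt}q^{(1)}_\varDelta(t)\le (b_\varDelta+a_\varDelta)-a_\varDelta q^{(1)}_\varDelta(t)$, hence by (\ref{J25}) a level $1+b_\varDelta/a_\varDelta$; the Jensen/Riccati route gives the root $\bigl(1+\sqrt{1+4b_\varDelta/a_\varDelta}\,\bigr)/2>1$. Both exceed $\kappa_\varDelta$ whenever ${\rm V}(\varDelta)e^{\vartheta}$ and $b_\varDelta/a_\varDelta$ are small, i.e.\ precisely for small cells, and the proportionality of $\kappa_\varDelta$ to ${\rm V}(\varDelta)$ is exactly what the proof of Theorem \ref{2tm} exploits when shrinking $\varDelta$ to a point to get (\ref{J39}) and (\ref{J1d}). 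So your argument, as it stands, proves a weaker statement than (\ref{J31a}), and the weakening is not harmless for the intended application.

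The paper avoids the degeneration by never introducing the factor $|\gamma_\varDelta|-1$: in (\ref{J33}) it bounds, for each $x\in\gamma_\varDelta$, the inner sum $\sum_{y\in\gamma_\varDelta\setminus x}a(x-y)$ below simply by $a_\varDelta$ (the second inequality in (\ref{J23})), which keeps a linear damping term at every level and yields (\ref{J35}), $\frac{d}{dt}q^{(l)}_\varDelta\le b_\varDelta q^{(l-1)}_\varDelta-l\,a_\varDelta q^{(l)}_\varDelta$; the case $l=1$ is then handled by (\ref{J25}) directly and produces exactly the entry $b_\varDelta/a_\varDelta$ of $\kappa_\varDelta$, after which the induction runs as in your proposal. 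Your instinct about the degenerate configurations is not misplaced: for $|\gamma_\varDelta|=1$ the bound of (\ref{J23}) does not literally apply, and while for $l\ge2$ such configurations contribute nothing (since $F^{(l-1)}_\varDelta(\gamma\setminus x)=0$ there), for $l=1$ the third line of (\ref{J33}) applies the bound to them as well. If you wish to address that point, you must do so in a way that still delivers the level $b_\varDelta/a_\varDelta$; the two substitutes you sketch do not, so the base case remains open in your proof.
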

\begin{proof}
By (\ref{R2}) we have that
\begin{equation*}
 % \label{J32}
 \frac{d}{dt}  q_\varDelta^{(l)} (t) = \mu_t (L F_\varDelta^{(l)}),
\end{equation*}
which by means of (\ref{J30}) can be written
\begin{eqnarray}
  \label{J33}
\frac{d}{dt}  q_\varDelta^{(l)} (t) & = & b_\varDelta
q_\varDelta^{(l-1)} (t) - \int_{\Gamma} \left( \sum_{x\in
\gamma_\varDelta}\left( \sum_{y\in \gamma\setminus x} a(x-y) \right)
F^{(l-1)}_\varDelta
(\gamma\setminus x) \right) \mu_t (d \gamma) \qquad \\[.2cm] \nonumber
& \leq & b_\varDelta  q_\varDelta^{(l-1)} (t) - \int_{\Gamma} \left(
\sum_{x\in \gamma_\varDelta} \left( \sum_{y\in \gamma_\varDelta
\setminus x} a(x-y) \right)F^{(l-1)}_\varDelta
(\gamma\setminus x) \right) \mu_t (d \gamma) \\[.2cm] \nonumber
& \leq & b_\varDelta  q_\varDelta^{(l-1)} (t) -  a_\varDelta
\int_{\Gamma} \left( \sum_{x\in \gamma_\varDelta}
F^{(l-1)}_\varDelta (\gamma\setminus x) \right) \mu_t (d \gamma).
\end{eqnarray}
By (\ref{J29}) it follows that
\[
\sum_{x\in \gamma_\varDelta}  F^{(l-1)}_\varDelta (\gamma\setminus
x)  = l F^{(l)}_\varDelta (\gamma).
\]
We apply this in (\ref{J33}) and obtain, cf (\ref{J24}) and
(\ref{J31e}),
\begin{equation}
  \label{J35}
\frac{d}{dt}  q_\varDelta^{(l)} (t) \leq b_\varDelta
q_\varDelta^{(l-1)} (t) -
 l  a_\varDelta
 q_\varDelta^{(l)} (t), \qquad l\in \mathds{N}.
\end{equation}
For $l=1$, by (\ref{J31}) and (\ref{J25}) we get from the latter that (\ref{J31a}) holds.
Now we assume that (\ref{J31a}) holds for a given $l-1$. It yields in (\ref{J35})
\[
 \frac{d}{dt}  q_\varDelta^{(l)} (t) \leq  \frac{b_\varDelta \kappa_\varDelta^{l-1}}{(l-1)!} - l  a_\varDelta
 q_\varDelta^{(l)} (t),
\]
from which by (\ref{J25}) we obtain that (\ref{J31a}) holds also for
$l$.
\end{proof}
\noindent {\it Proof of Theorem \ref{2tm}.}  By means of the evident
monotonicity
\[
\mu(N_\Lambda^n) \leq \mu(N_\Lambda^{n+1}), \qquad \mu(N_\Lambda^n)
\leq \mu(N_{\Lambda'}^n), \ \quad {\rm for} \ \ \Lambda \subset
\Lambda',
\]
we conclude that it is enough to prove the statement for: (a)
$n=2^s$; (b) $\Lambda$ being a finite sum of the translates of the
cubic cell $\varDelta$ as in Lemma \ref{J3lm}. Let $m$ be such that
\begin{equation*}
  %\label{La}
 \Lambda =
\bigcup_{l=1}^m \varDelta_{l},
\end{equation*}
where the cells $\varDelta_l$ are such that the intersection of each
two distinct ones is of zero Lebesgue measure.
 By the estimate
\[
\left(\sum_{l=1}^n a_l \right)^2 \leq n \sum_{l=1}^n a_l^2,
\]
we prove that
\begin{equation*}
  %\label{J36}
  N_\Lambda^{2^s}(\gamma) \leq m^{2^s -1} \sum_{l=1}^m N_{\varDelta_l}^{2^s}
  (\gamma), \qquad s\in \mathds{N}_0.
\end{equation*}
Then by Lemma \ref{J3lm} and (\ref{J29}) we obtain
\begin{equation*}
 % \label{J36a}
\mu_t (N_\Lambda^{2^s}) \leq m^{2^s} T_{2^s}
(\bar{\kappa}_\varDelta) = \left[{\rm V}(\Lambda) \right]^{2^s}
\left(T_{2^s} (\bar{\kappa}_\varDelta)/\left[{\rm V}(\varDelta)
\right]^{2^s}\right),
\end{equation*}
where $T_n$ is the Touchard polynomial
\begin{equation*}
 % \label{Tou}
T_n(\varkappa) := \sum_{l=1}^n S(n, l) \varkappa^l,
\end{equation*}
with $S(n, l)$ given in (\ref{Tou1}),  see \cite[eq.
(3.4)]{Riordan},
 and
\begin{equation*}
 % \label{J37}
\bar{\kappa}_\varDelta := {\rm V}(\varDelta) \max\left\{
e^\vartheta; {\|b\|}/{a_\varDelta}\right\},
\end{equation*}
cf (\ref{J31e}). This proves (\ref{T200}).

If the initial state $\mu_0$ is such that each $k^{(l)}_{\mu_0}\in
C_{\rm b} ((\mathds{R}^{d})^l)$ -- the set of bounded continuous
functions, then so is $k^{(l)}_{t}$ for all $t>0$. This can be
proved by repeating the corresponding proof in \cite{FKK}. As in
(\ref{T6}) we have, see also (\ref{J28}),
\begin{equation*}
  %\label{J38}
\mu_t (N_\varDelta) = \int_{\varDelta}k^{(1)}_t (x) dx.
\end{equation*}
By taking a
sequence of $\varDelta$ shrinking up to a given $x$ and applying
(\ref{J31a}) we obtain
\begin{equation}
  \label{J39}
 k_t^{(1)} (x) \leq \max\left\{ k_{\mu_0}^{(1)}(x); b(x)/a(0)
 \right\} \leq \max\{ \|k_{\mu_0}^{(0)}\|_{L^\infty(\mathds{R}^d)}; \|b\|/a(0) \},
\end{equation}
which proves the bound in (\ref{J1d}) for $k^{(1)}_t$. Note that the
smaller bound in (\ref{J39}) coincides with the corresponding bound
in the exactly soluble Surgailis model in which the mortality rate
$m(x)$ is substituted by $a (0)$. That is, the competition here
amounts to the appearance of an effective mortality $a(0)$. Another
important observation regarding the competition in this model is
based on (\ref{J26}). Let $\Lambda$ be compact and $k_t^{(1)}$
satisfy (\ref{J39}). Then for an arbitrary $\varepsilon>0$, one
finds $\tau(\varepsilon, \Lambda)$, dependent also on $\mu_0$, such
that, for all $x\in \Lambda$ and $t\geq \tau(\varepsilon, \Lambda)$,
the following holds
\begin{equation*}
 % \label{J40}
k_t^{(1)} (x) \leq \frac{b(x)}{ a(0)} +\varepsilon.
\end{equation*}
That is, after some time
the density at each point of $\Lambda$ approaches a certain level, independent of the
initial distribution of the entities in $\Lambda$.

Let us now prove the validity of the second estimate in (\ref{J1d}). The bound for $k^{(2)}_t (x,x)$ can be obtained
from (\ref{J31a}) in the way similar to that used in getting (\ref{J39}). To bound $k^{(2)}_t (x,y)$ with $x\neq y$, let us take two
disjoint cells $\varDelta_x$ and $\varDelta_y$. Both are  of side $h>0$ and such that: (a) $x\in \varDelta_x$ and $y\in \varDelta_y$; (b)
$\varDelta_x \to \{x\}$ and $\varDelta_y \to \{y\}$ as $h \to 0$. Then we set
\begin{equation*}
 %\label{J41}
F_h (\gamma) = \left[ \sum_{z\in \gamma}\left(I_{\varDelta_x} (z) - I_{\varDelta_y}(z) \right) \right]^2.
\end{equation*}
For the state $\mu_t$, we have
\begin{gather*}
 0 \leq \mu_t (F_h) = q^{(2)}_{\varDelta_x}(t) +
 q^{(2)}_{\varDelta_y}(t) - 2 \int_{\varDelta_x}\int_{\varDelta_y}
 k^{(2)}_t (z_1 , z_2) d z_1 d z_2.
\end{gather*}
By (\ref{J31e}) and (\ref{J31a}) this yields
\begin{gather*}
  \int_{\varDelta_x}\int_{\varDelta_y}
 k^{(2)}_t (z_1 , z_2) d z_1 d z_2 \leq
 \frac{1}{2}\max\{\kappa_{\varDelta_x}^2; \kappa_{\varDelta_y}^2\} \leq
 \frac{h^{2d}}{2} \max\{ e^{2\vartheta}; (\|b\|/ a_h)^2\},
\end{gather*}
where $a_h = \min\{ a_{\varDelta_x}; a_{\varDelta_y}\}$. Passing
here to the limit $h\to 0$ and taking into account the continuity of
$k_t^{(2)}$ and $a$ we obtain the second inequality in (\ref{J1d}).
This completes the proof. \hskip12.5cm $\square$

\section*{Acknowledgment}
The present research was supported by the European Commission under
the project STREVCOMS PIRSES-2013-612669 and by the SFB 701
``Spektrale Strukturen and Topologische Methoden in der Mathematik".


\begin{thebibliography}{00}

%\bibitem{Albev} S. Albeverio, Yu. G. Kondratiev, M.  R\"ockner,
%Analysis and geometry on configuration spaces, {\it J. Func. Anal.}
%{\bf 154} (1998) 444--500.


%\bibitem{Banas} J. Banasiak, M. Lachowicz, M. Moszy\'nski,
%Semigroups for generalized birth-and-death equations in $\ell^p$
%spaces, {\it Semigroup Forum} {\bf 73} (2006) 175--193.

%\bibitem{BB} N. Bellomo, F. Brezzi, Mathematics, complexity and multiscale features
%of large systems of self-propelled particles, {\it Math. Models
%Methods Appl. Sci.} {\bf 26} (2016) 207--214.

%\bibitem{B} N. Bellomo, D. Knopoff, and J. Soler, On the difficult interplay between
%life, ``complexity'', and mathematical sciences, {\it Math. Models
%Methods Appl. Sci.} {\bf 23} (2013) 1861--1913.

\bibitem{Berns}  Ch. Berns,  Yu. Kondratiev, Yu. Kozitsky and O. Kutoviy,
\newblock Kawasaki dynamics in continuum: Micro- and mesoscopic
descriptions, \newblock \emph{ J. Dyn. Diff. Equat.} {\bf 25}
(2013), 1027--1056.


%\bibitem{BP1} B. M. Bolker, S. W. Pacala,
%Using moment equations to understand stochastically driven spatial
%pattern formation in ecological systems, {\it Theoret. Population
%Biol. } {\bf 52} (1997) 179--197.

%\bibitem{BP3} B. M. Bolker, S. W.  Pacala, C. Neuhauser,
%Spatial dynamics in model plant communities: What do we really know?
%{\it The American Naturalist} {\bf 162} (2003)  135--148.


\bibitem{Cox} J. T. Cox,  Coalescing random walks and voter model consensus times on the torus in $\mathbb{Z}^d$. {\it Ann. Probab.}
{\bf 17} (1989) 1333--1366.

%\bibitem{Feller} W. Feller, {\it An Introduction to Probability Theory and its Applications,} Vol. I (3nd Ed.).
%John Wiley \& Sons, New York Chichester Brisbane Toronto, 1970.

%\bibitem{DimaRR} D. Finkelshtein, M. Friesen,  H. Hatzikirou, Yu. Kondratiev, T. Kr\"uger, O.
%Kutoviy, Stochastic models of tumour development and related
%mesoscopic equations,  {\it Interdisciplinary Studies of Complex
%Systems} {\bf 7} (2015) 5--85.

\bibitem{DimaR} D. Finkelshtein and  Yu.  Kondratiev, Regulation
mechanisms in spatial stochastic development models, {\it J. Stat.
Phys.} {\bf 136} (2009) 103--115.

\bibitem{FKK}  D. L. Finkelshtein,  Yu. G.  Kondratiev, Yu.
Kozitsky,  Glauber dynamics in continuum: A constructive approach to
evolution of states, {\it  Discrete Contin. Dyn. Syst.}  {\bf
33} (2013) 1431--1450.

%\bibitem{Dima} D. L. Finkelshtein,  Yu. G.  Kondratiev, O. Kutovyi,
%Individual based model with competition in spatial ecology, {\it
%SIAM J. Math. Anal.} {\bf 41} (2009) 297--317.

\bibitem{DimaN2} D. L. Finkelshtein,  Yu. G.  Kondratiev, O. Kutovyi,
Semigroup approach to birth-and-death stochastic dynamics in
continuum, {\it J. Funct. Anal.}  {\bf 262} (2012)  1274–-1308.


\bibitem{FKKK}  D. L. Finkelshtein,  Yu. G.  Kondratiev, Yu. Kozitsky, O. Kutovyi,
The statistical dynamics of a spatial logistic model and the related
kinetic equation, {\it Math. Models Methods Appl. Sci.} {\bf  25}
(2015) 343--370.

%\bibitem{Groemer} H. Groemer, Some basic properties of packing and covering constants,
%\emph{Discrete Comput. Geom.} {\bf 1} (1986) 183--193.

\bibitem{Kingman} J. F. C. Kingman, {\it Poisson processes.}
Oxford Studies in Probability, 3. Oxford Science Publications. The Clarendon Press, Oxford University Press, New York, 1993.

\bibitem{KK} Yu. G.  Kondratiev, Yu. Kozitsky, The  evolution of states in a spatial population
model, {\it J. Dyn. Diff. Equat.} (2016).



\bibitem{Tobi} Yu.  Kondratiev, T.  Kuna,  Harmonic analysis on configuration space. I. General
theory, {\it Infin. Dimens. Anal. Quantum Probab. Relat. Top.} {\bf
5} (2002) 201--233.

\bibitem{TobiJoao} Yu. Kondratiev, T.  Kuna, M. J. Oliveira,  Holomorphic Bogoliubov
functionals  for interacting particle systems in continuum, {\it J.
Funct. Anal.} {\bf 238} (2006) 375--404.


\bibitem{K} Yu. Kozitsky,  Dynamics of spatial logistic model: finite systems, in: J. Banasiak, A. Bobrowski, M. Lachowicz
(Eds.), Semigroups of Operators -- Theory and Applications:
B\c{e}dlewo, Poland, October 2013. Springer Proceedings in
Mathematics \& Statistics 113, Springer 2015, pp. 197--211.

\bibitem{Mu} D. J.  Murrell, U.  Dieckmann, R.  Law, On moment closures for population
dynamics in contunuous space,  {\it J. Theoret. Biol.} {\bf  229}
(2004) 421--432.


%\bibitem{Neuhauser} C. Neuhauser, Mathematical challenges in spatial ecology,
%{\it Notices of AMS} {\bf 48} (11) (2001) 1304--1314.

%\bibitem{Ova} A. North and O. Ovaskainen, Interactions between dispersal, competition, and landscape heterogeneity, {\it Oikos} {\bf 116} (2007) 1106--1119.



%\bibitem{O} O. Ovaskainen, D. Finkelshtein, O. Kutovyi, S. Cornel,
%B.  Bolker, Yu. Kondratiev,  A general mathematical framwork for the
%analysis of spatio-temporal point processes, {\it Theor. Ecol. }
%{\bf 7} (2014) 101--113.

%\bibitem{Ri} L. M. Riccardi, Stochastic population theory: birth and
%death processes, in: Mathematical Ecology, T. G. Hallamand and S. A.
%Levin (Eds.) Springer-Verlag, Berlin, 1986, pp. 155--190.


\bibitem{Riordan} J. Riordan, Moment recurrence relations for
binomial, Poisson and hypergeomentric frequency distributions, {\it
Ann. Math. Statist.}{\bf 8} (1937) 103--111.

\bibitem{Sur} D. Surgailis, On multiple Poisson stochastic integrals
and associated Markov semigroups, {\it Probab. Math. Stat.} {\bf
3}(2) (1984) 217--239.

\end{thebibliography}
\end{document}